\newtheorem{prop}{Proposition}[section]
\newtheorem{thm}[prop]{Theorem}
\newtheorem{cor}[prop]{Corollary}
\newtheorem{lem}[prop]{Lemma}
\theoremstyle{definition}
\newtheorem{defn}[prop]{Definition}
\newtheorem{expl}[prop]{Example}
\newtheorem{rem}[prop]{\it Remark}
\newtheorem{emp}[prop]{}
\newtheorem*{claim*}{Claim}
\newcommand{\bR}{\mathbb{R}}
\newcommand{\bA}{\mathbb{A}}
\newcommand{\bQ}{\mathbb{Q}}
\newcommand{\bZ}{\mathbb{Z}}
\newcommand{\bN}{\mathbb{N}}
\newcommand{\bG}{\mathbb{G}}
\newcommand{\bT}{\mathbb{T}}
\newcommand{\bk}{\mathbbm{k}}
\newcommand{\oX}{\overline{X}}
\newcommand{\oY}{\overline{Y}}
\newcommand{\oM}{\overline{M}}
\newcommand{\oD}{\overline{D}}
\newcommand{\oDelta}{\overline{\Delta}}
\newcommand{\bu}{\bar{u}}
\newcommand{\bphi}{\bar{\varphi}}
\newcommand{\cX}{\mathcal{X}}
\newcommand{\cY}{\mathcal{Y}}
\newcommand{\cC}{\mathcal{C}}
\newcommand{\cO}{\mathcal{O}}
\newcommand{\cI}{\mathcal{I}}
\newcommand{\cF}{\mathcal{F}}
\newcommand{\cD}{\mathcal{D}}
\newcommand{\cP}{\mathcal{P}}
\newcommand{\cR}{\mathcal{R}}
\newcommand{\cS}{\mathcal{S}}
\newcommand{\fa}{\mathfrak{a}}
\newcommand{\fm}{\mathfrak{m}}
\newcommand{\ft}{\mathfrak{t}}
\newcommand{\Spec}{\mathrm{Spec}}
\newcommand{\Supp}{\mathrm{Supp}}
\newcommand{\Hom}{\mathrm{Hom}}
\newcommand{\mult}{\mathrm{mult}}
\newcommand{\lct}{\mathrm{lct}}
\newcommand{\vol}{\mathrm{vol}}
\newcommand{\ord}{\mathrm{ord}}
\newcommand{\Val}{\mathrm{Val}}
\newcommand{\hvol}{\widehat{\rm vol}}
\newcommand{\rup}[1]{\lceil #1\rceil}
\newcommand{\wt}{\mathrm{wt}}
\newcommand{\Coef}{\mathrm{Coeff}}
\newcommand{\mld}{\mathrm{mld}}
\newcommand{\mldk}{\mathrm{mld}^{\mathrm{K}}}
\newcommand{\gr}{\mathrm{gr}}
\newcommand{\txi}{\langle \xi \rangle}
\newcommand{\grhat}[1]{\gr_\mu (\hat{#1})}
\numberwithin{equation}{section}
\title[Boundedness of singularities]{Boundedness of log Fano cone singularities and discreteness of local volumes}
\author{Chenyang Xu}
\address{Department of Mathematics, Princeton University, Princeton, NJ 08544, USA}
\email     {chenyang@princeton.edu}
\author{Ziquan Zhuang}
\address{Department of Mathematics, Johns Hopkins University, Baltimore, MD 21218, USA}
\email{zzhuang@jhu.edu}
\date{}
\begin{document}

\begin{abstract}
We prove that in any fixed dimension, K-semistable log Fano cone singularities whose volumes are bounded from below by a fixed positive number form a bounded set. As a consequence, we show that the set of local volumes of klt singularities of a fixed dimension has zero as the only accumulation point. 
\end{abstract}

\maketitle

\section{Introduction}
In recent years, there has been remarkable progress in the algebro-geometric study of K-stability. Besides the global theory for Fano varieties, a local stability theory has also been introduced for Kawamata log terminal (klt) singularities, see \cites{LLX-nv-survey,Z-SDC-survey} for an overview. Notably, the stable degeneration conjecture is settled in \cite{XZ-SDC} (see also \cites{Blu-minimizer-exist,LX-stability-higher-rank,Xu-quasi-monomial,XZ-minimizer-unique}).  It provides for any klt singularity $x\in (X={\rm Spec}(R),\Delta)$, a canonical degeneration to a K-semistable log Fano cone singularity $x_0\in (X_0,\Delta_0;\xi_v)$. More precisely, let $v$ be a valuation minimizing the normalized volume function 
 \[
 \hvol_{X,\Delta}\colon \Val_{X,x}\to \mathbb{R}_{>0}\cup \{+\infty\}
 \]
 defined as in \cite{Li-normalized-volume}, then the corresponding degeneration is obtained as $X_0={\rm Spec}({\rm Gr}_vR)$ with a cone vertex $x_0\in X_0$, $\Delta_0$ is the corresponding degeneration of $\Delta$ and $\xi_v$ the Reeb vector induced by the valuation $v$. This degeneration is volume-preserving, i.e. it satisfies
\begin{equation}\label{eq-equivolume}
\hvol(x,X,\Delta)=\hvol_{X,\Delta}(v)=\hvol_{X_0,\Delta_0}(\wt_{\xi_v})=\hvol(x_0,X_0,\Delta_0)\,.
\end{equation}

To complete the picture of the local stability theory, one central open problem (see e.g. \cite[Conjecture 1.7]{XZ-SDC}) is the boundedness of K-semistable $n$-dimensional log Fano cone singularities $(X,\Delta;\xi)$, assuming we fix a lower bound of the local volume 
\[
\hvol(x,X,\Delta)=\hvol_{X,\Delta}(\wt_{\xi})\,.
\]
The aim of this paper is to settle this local boundedness question. Our main theorem is the following. The relevant definitions are recalled in Section \ref{s:prelim}. 
 
\begin{thm}\label{thm-Fanocone bounded}
Let $n\in \bN$ and let $I\subseteq [0,1]$ be a finite set. Let $\varepsilon>0$ be a positive real number. Let $\mathcal{K}$ be the set of K-semistable $n$-dimensional polarized log Fano cone singularities $x\in (X,\Delta;\xi)$ with coefficients in $I$ such that
\[
\hvol(x,X,\Delta)\ge \varepsilon.
\]
Then $\mathcal{K}$ is bounded.
\end{thm}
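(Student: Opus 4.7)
The strategy is to reduce to the boundedness of $K$-semistable log Fano pairs of dimension $n-1$ with bounded anti-canonical volume and coefficients in a finite set (known by Birkar's BAB together with work of Jiang and Liu--Xu), via the cone construction. The argument proceeds in two stages: first handle the case where the Reeb vector $\xi$ is quasi-regular, then reduce the general case to it.

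\emph{Quasi-regular case.} When $\xi$ generates an effective $\bG_m$-action on $X$, the quotient $V=(X\setminus\{x\})/\bG_m$ carries a natural log Fano pair structure $(V,\Delta_V)$ together with an ample $\bQ$-Cartier divisor $H$ proportional to $-K_V-\Delta_V$, so that $X$ is the affine cone on $(V,H)$. The log Fano cone is $K$-semistable if and only if $(V,\Delta_V)$ is, while \eqref{eq-equivolume} expresses $\hvol(x,X,\Delta)$ as a universal constant times $(-K_V-\Delta_V)^{n-1}$ divided by a power of the polarization index $r$. The bound $\hvol\ge\varepsilon$ then yields both a positive lower bound on $(-K_V-\Delta_V)^{n-1}$ and an upper bound on $r$. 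Applying the boundedness result mentioned above gives boundedness of $(V,\Delta_V;H)$, and the bound on $r$ upgrades this to boundedness of the cones $(X,\Delta;\xi)$.

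\emph{Reduction to quasi-regular.} Let $\bT$ be a maximal torus acting on $(X,\Delta)$ with $\xi$ in its Reeb cone, and approximate $\xi$ by quasi-regular vectors $\xi'$ in the same Reeb cone. Although $(X,\Delta;\xi')$ is generally no longer $K$-semistable, applying the stable degeneration theorem of \cite{XZ-SDC} to $(X,\Delta)$ equipped with the minimizer of $\hvol_{X,\Delta}$ in a $\bT$-invariant neighborhood of $\wt_{\xi'}$ yields a $K$-semistable log Fano cone degeneration $(X',\Delta';\xi'')$ with quasi-regular Reeb vector, and by continuity of $\hvol$ in the Reeb vector one has $\hvol(x',X',\Delta')\ge \varepsilon/2$ once $\xi'$ is close enough to $\xi$. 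The quasi-regular case then places $(X',\Delta';\xi'')$ in a bounded family, and boundedness of the original $(X,\Delta;\xi)$ follows from the flatness of the equivariant degeneration together with a standard limiting argument in a relative Hilbert scheme.

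\emph{Main obstacle.} The reduction step is where I expect the technical core to lie. Two difficulties stand out. First, one must verify that the boundary coefficients of the degenerate pair $\Delta'$ remain in a fixed finite set; the finiteness of $I$ (rather than merely DCC) makes this delicate, and requires tracking how the components of $\Delta$ interact with the graded pieces of the associated Rees algebra and with the orbifold base change implicit in the cone construction. Second, to pass from boundedness of the quasi-regular degenerations to boundedness of $\mathcal{K}$ itself, one needs the choice of $\xi'$ and the degeneration data to be made uniformly across all members of $\mathcal{K}$; this uniformity appears to demand effective versions of the stable degeneration theorem, and will be the most subtle ingredient of the argument.
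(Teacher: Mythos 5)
There is a genuine gap at the very first step, and it is exactly the difficulty the paper is organized around. In the quasi-regular case you assert that $\hvol(x,X,\Delta)\ge\varepsilon$ ``yields both a positive lower bound on $(-K_V-\Delta_V)^{n-1}$ and an upper bound on $r$.'' What the volume hypothesis actually gives is only $\varepsilon\le r\cdot(-K_V-\Delta_V)^{n-1}\le n^n$ (via $\hvol_{X,\Delta}(\wt_\xi)=r(-K_V-\Delta_V)^{n-1}$), which does not separate the two factors: you must rule out the regime where $r$ is huge and the volume of the orbifold base is tiny, and no argument is offered. Note that $r=A_{X,\Delta}(\wt_\xi)$, so an a priori upper bound on $r$ in terms of $\hvol$ is a statement of the same depth as the bounds on $\mldk$, multiplicity and embedded dimension (Theorem \ref{thm:volume control other invariants}), which the paper can only prove as a \emph{consequence} of boundedness, via the syzygy/lifting analysis of Section 4; assuming it at the outset is essentially circular. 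The paper explicitly emphasizes this obstruction: the orbifold bases $(V,\Delta_V,L)$ are \emph{not} bounded in the relevant generality (see Example \ref{exam-weightedprojective} and the remark after Corollary \ref{c-bounded cone}), which is precisely why it abandons the quotient $V$ and works instead with the projective orbifold cone compactification $(\oX,\oDelta+V_\infty)$, bounding the local alpha invariant away from $V_\infty$ (Lemma \ref{lem:compare alpha at vertex and infinity}), proving effective birationality and log birational boundedness there, constructing sub-klt bounded complements to get boundedness in codimension one, and finally recovering $X$ by an MMP/ample-model argument. Separately, the coefficients of $\Delta_V$ include orbifold contributions and leave the finite set $I$, so the base is only a DCC-coefficient pair; you flag this, but it compounds the problem.

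The reduction from irregular to quasi-regular is also not viable as written. If $(X,\Delta;\xi)$ is K-semistable, the minimizer of $\hvol_{X,\Delta}$ is $\wt_\xi$ itself; ``the minimizer in a $\bT$-invariant neighborhood of $\wt_{\xi'}$'' is not an object the stable degeneration theorem produces, and the degeneration of the perturbed (no longer K-semistable) cone need not have quasi-regular Reeb vector. More seriously, even granting a bounded quasi-regular degeneration $(X',\Delta';\xi'')$, the degeneration goes the wrong way: $(X,\Delta)$ specializes \emph{to} $(X',\Delta')$, and boundedness of special fibers does not bound the general fiber without uniform control on, say, embedding dimension or degree of $X$ itself --- again a Theorem \ref{thm:volume control other invariants}-type input that is downstream of boundedness. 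The paper's way around this is to prove the stronger Theorem \ref{thm:polarized cone bdd}, replacing K-semistability by the condition $\Theta(X,\Delta;\xi)\ge\theta$, which (unlike K-semistability) survives a small rational perturbation of $\xi$ \emph{on the same} $(X,\Delta)$; this is what makes the quasi-regular reduction legitimate, with no degeneration and no limiting argument needed.
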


By the volume-preserving property of the degeneration \eqref{eq-equivolume}, Theorem \ref{thm-Fanocone bounded} implies that all $n$-dimensional klt singularities with a positive lower bound on their volumes, are bounded up to degeneration. In particular, we have the following consequence which gives a positive answer to \cite[Question 6.12]{LLX-nv-survey}.

\begin{thm}\label{thm-discrete volume}
Fix $n\in\bN$ and a finite set $I\subseteq[0,1]$. Then the set
\[
\widehat{\rm Vol}_{n,I}=\left\{\hvol(x,X,\Delta)\mid x\in (X,\Delta)\mbox{ is klt, }\dim(X)=n, \, {\rm Coeff}(\Delta)\subseteq I\right\}
\]
has 0 as the only accumulation point. 
\end{thm}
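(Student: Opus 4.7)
Fix $\varepsilon>0$; it suffices to show that $\widehat{\rm Vol}_{n,I}\cap[\varepsilon,+\infty)$ is finite. Suppose for contradiction that it is infinite: I pick pairwise distinct values $v_1, v_2, \dotsc \ge \varepsilon$, each realized by an $n$-dimensional klt singularity $x_i\in(X_i,\Delta_i)$ with $\Coef(\Delta_i)\subseteq I$. Applying the stable degeneration theorem of \cite{XZ-SDC} to the minimizer of $\hvol_{X_i,\Delta_i}$ produces a K-semistable log Fano cone singularity $x_{i,0}\in(X_{i,0},\Delta_{i,0};\xi_i)$, whose boundary coefficients are inherited from those of $\Delta_i$ and therefore still lie in $I$, and whose local volume equals $v_i\ge\varepsilon$ by the identity \eqref{eq-equivolume}. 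The whole collection $\{x_{i,0}\in(X_{i,0},\Delta_{i,0};\xi_i)\}$ then lies inside the set $\mathcal{K}$ of Theorem \ref{thm-Fanocone bounded} for the data $(n,I,\varepsilon)$, and is consequently bounded.

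Boundedness allows me to realize these log Fano cones as geometric fibers of a single $\bQ$-Gorenstein family $\pi\colon(\cX,\cD)\to S$ of polarized log Fano cone singularities over a base $S$ of finite type over $\bk$, together with a (multivalued) algebraic section of the relative Lie algebra of the fiberwise torus restricting on suitable geometric fibers to the Reeb vectors $\xi_i$. The local volume at the cone vertex then defines a constructible function $S\to\bR$: this combines the continuity of the weighted volume in the Reeb vector, the constructibility of the K-semistable locus in families of polarized log Fano cones, and the uniqueness (up to the action of the automorphism torus) of the K-semistable Reeb vector. Since $S$ is Noetherian, any constructible $\bR$-valued function on $S$ takes only finitely many values, which contradicts the pairwise distinctness of the $v_i$.

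The principal obstacle, once Theorem \ref{thm-Fanocone bounded} is available, is this last constructibility statement: one must parametrize the varying K-semistable Reeb vectors in a bounded family algebraically and verify that the weighted volume at these vectors assembles into a constructible function on the base. The remainder of the argument is a soft compactness/finiteness extraction, so the substantive content of the reduction really lives in the boundedness theorem itself.
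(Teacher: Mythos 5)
Your argument follows the same route as the paper: degenerate each singularity to its K-semistable log Fano cone via Theorem \ref{thm:SDC}, put these cones into a bounded family using Theorem \ref{thm-Fanocone bounded}, and deduce finiteness of the volumes above any $\varepsilon>0$ from constructibility over a Noetherian base. Two points need repair, though. First, the coefficients of the degenerate boundary $\Delta_0$ need \emph{not} lie in $I$: distinct components of $\Delta$ can specialize to the same divisor on $X_0=\Spec(\gr_v R)$, so one can only guarantee $\Coef(\Delta_0)\subseteq I^+:=\{\sum_i m_i a_i \mid m_i\in\bN,\ a_i\in I\}\cap[0,1]$. This is harmless, since $I^+$ is again finite and Theorem \ref{thm-Fanocone bounded} applies with $I^+$ in place of $I$, but as written your invocation of the boundedness theorem with coefficient set $I$ is not justified.

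Second, the step you yourself flag as the ``principal obstacle'' is where your proposal is genuinely incomplete, and also where it takes an unnecessarily hard detour. The local volume $\hvol(b,\cX_b,\cD_b)$ is an invariant of the fiber singularity alone, independent of any choice of polarization, so there is no need to parametrize the K-semistable Reeb vectors algebraically, prove constructibility of the K-semistable locus, or use uniqueness of the minimizing Reeb vector. The statement you actually need --- constructibility of $b\mapsto \hvol(b,\cX_b,\cD_b)$ on the base of an $\bR$-Gorenstein bounded family of klt singularities --- is a known theorem, namely \cite[Theorem 1.3]{Xu-quasi-monomial} (see \cite[Theorem 3.5]{HLQ-vol-ACC} for real coefficients), and this is exactly what the paper cites to finish. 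Replacing your unproven constructibility scheme by that citation, and $I$ by $I^+$, closes the argument; without one of these inputs the proof does not go through.
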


As another application of Theorem \ref{thm-Fanocone bounded}, we show that many classical invariants of klt singularities are controlled by their local volume.

\begin{thm} \label{thm:volume control other invariants}
Fix $n\in\bN$, $\varepsilon>0$ and a finite set $I\subseteq[0,1]$. Then there exists some  constant $M>0$ depending only on $n,\varepsilon,I$ such that for any $n$-dimensional klt singularity $x\in (X,\Delta)$ with ${\rm Coeff}(\Delta)\subseteq I$ and $\hvol(x,X,\Delta)\ge \varepsilon$, we have
\[
\dim_{\kappa(x)} \fm_x/\fm_x^2 \le M, \quad \mult_x X\le M, \quad\mldk(x,X,\Delta)\le M \, .
\]
\end{thm}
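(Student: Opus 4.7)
\emph{Plan.} The approach is to reduce Theorem \ref{thm:volume control other invariants} to Theorem \ref{thm-Fanocone bounded} via the stable degeneration of \cite{XZ-SDC}, and then to transfer the resulting bounds from the K-semistable log Fano cone back to the original singularity by upper semicontinuity in flat families. Given $x\in (X,\Delta)$ satisfying the hypotheses, let $v$ be the volume minimizer of $\hvol_{X,\Delta}$. The stable degeneration produces a K-semistable log Fano cone singularity $x_0\in (X_0,\Delta_0;\xi_v)$ of dimension $n$ with $\Coef(\Delta_0)\subseteq I$ inherited from $\Delta$, and \eqref{eq-equivolume} gives $\hvol(x_0,X_0,\Delta_0)\ge \varepsilon$. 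Thus $(X_0,\Delta_0;\xi_v)\in \mathcal{K}$, and by Theorem \ref{thm-Fanocone bounded} it lies in a bounded family. On a bounded family the three invariants $\dim_{\kappa(x_0)}\fm_{x_0}/\fm_{x_0}^2$, $\mult_{x_0}X_0$ and $\mldk(x_0,X_0,\Delta_0)$ are each constructible with finitely many values, hence uniformly bounded by some $M_0=M_0(n,\varepsilon,I)$.

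\emph{Transfer back to $(X,\Delta)$.} The Rees construction for the valuation $v$ produces a flat degeneration $\cR\to \bA^1$ whose generic fiber is $R=\cO_{X,x}$ and whose special fiber is $\gr_v R=\cO_{X_0,x_0}$. Upper semicontinuity of the Hilbert--Samuel function on fibers of a flat family then yields
\[
\dim_{\kappa(x)}\fm_x/\fm_x^2 \le \dim_{\kappa(x_0)}\fm_{x_0}/\fm_{x_0}^2 \quad\text{and}\quad \mult_x X \le \mult_{x_0} X_0.
\]
For $\mldk$, the key identity is $\mldk(x,X,\Delta)=A_{X,\Delta}(v)$, where $v$ is the volume minimizer; the analogous identity on the cone reads $\mldk(x_0,X_0,\Delta_0)=A_{X_0,\Delta_0}(\wt_{\xi_v})$. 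Since the Rees degeneration sends $v$ to $\wt_{\xi_v}$ and log discrepancy is preserved along this degeneration, the two quantities agree, so $\mldk(x,X,\Delta)\le M_0$ as well. Setting $M=M_0$ finishes the proof.

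\emph{Main obstacle.} The delicate point is the $\mldk$ bound: one must know that $\mldk$ is computed by the volume minimizer on both sides, which is a consequence of the full stable degeneration picture from \cite{XZ-SDC}, and that this computed value varies in a controlled way over $\mathcal{K}$. Concretely, one needs that the Reeb vector $\xi_v$ and the associated Kollár component vary boundedly over the family produced by Theorem \ref{thm-Fanocone bounded}, which should follow from constructibility arguments on the parameter space but merits careful verification. The multiplicity and embedding-dimension comparisons are, by contrast, essentially formal consequences of the existence of the Rees flat degeneration together with standard semicontinuity in flat families.
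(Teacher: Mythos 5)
Your treatment of the embedded dimension and the multiplicity is exactly the paper's: degenerate to the K-semistable log Fano cone via Theorem \ref{thm:SDC}, use \eqref{eq-equivolume} to keep the volume bound so that Theorem \ref{thm-Fanocone bounded} applies, and conclude by upper semicontinuity of these invariants along the flat (Rees-type) degeneration. That part is correct and is all the paper says about those two bounds.

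The $\mldk$ bound, however, has a genuine gap. Your ``key identity'' $\mldk(x,X,\Delta)=A_{X,\Delta}(v)$, with $v$ the normalized volume minimizer, is neither known nor true in general: $\mldk$ is an infimum over Koll\'ar components, and the minimizer $v$ is in general only quasi-monomial --- in the irregular case it is not divisorial, hence not proportional to any Koll\'ar component, and even in the quasi-regular case there is no reason it computes the infimum. For an upper bound one only needs \emph{some} Koll\'ar component of uniformly bounded log discrepancy, but producing one is precisely the hard point: when $\xi_v$ is irrational one must replace it by a rational perturbation $\xi_1$ and, crucially, lift $\wt_{\xi_1}$ back from the degeneration $X_0=\Spec(\gr_v R)$ to a divisorial valuation on the original $X$. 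This lifting is not a formal ``constructibility on the parameter space'' statement; it is the content of the paper's Section 4: one bounds the syzygy complexity of $\gr_v R$ uniformly over the bounded family, lifts a graded free resolution (Lemma \ref{lem:lift resolution}), shows that every Reeb vector in a uniform neighbourhood of $\xi_v$ lifts to a quasi-monomial valuation on $X$ with associated graded ring $\gr_v R$ (Lemma \ref{lem:lift valuation}, Corollary \ref{cor:lift kc}), chooses $\xi_1$ rational and uniform over the (stratified) family so that the lift $v_1=\ord_E$ is divisorial, invokes \cite[Theorem 4.1]{XZ-SDC} to see that $E$ is a Koll\'ar component, and finally uses $A_{X,\Delta}(E)=A_{X_0,\Delta_0}(\wt_{\xi_1})$ together with boundedness to conclude (Theorem \ref{thm:mld^K bdd}). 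There are also preliminary reductions you would need before any of this applies ($I\subseteq\bQ$, reduction to $K_X$ $\bQ$-Cartier via \cite[Proposition 5.8]{Z-mld^K-1}, and a uniform Cartier index for $m\Delta_0$ from \cite[Corollary 1.4]{XZ-minimizer-unique}). So the first two bounds in your proposal are essentially the paper's argument, but the $\mldk$ part as written rests on an unjustified identity and omits the lifting construction that constitutes the actual proof.
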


Here $\mld^K(x,X,\Delta)$ denotes the minimal log discrepancy of the Koll\'ar components of the singularity, see Definition \ref{defn:mld^K}. For the embedded dimension $\dim_{\kappa(x)} \fm_x/\fm_x^2$ and the multiplicity $\mult_x X$, the existence of the uniform upper bound $M$ directly follows from \eqref{eq-equivolume}, Theorem \ref{thm-Fanocone bounded} and the upper semi-continuity of these invariants. The $\mld^K$ case is more subtle, see Theorem \ref{thm:mld^K bdd}. Note that the converse of Theorem \ref{thm:volume control other invariants} is false, i.e. even if the embedded dimension, multiplicity and $\mld^K$ of the singularities are all bounded from above, the local volume of the singularity can still be arbitrarily small. This can be seen already in the case of ADE surface singularities.

Philosophically, one can compare Theorem \ref{thm-discrete volume} with \cite[Theorem 1.3]{HMX-ACC} which deals with the global case of log general type pairs, where volumes might accumulate even when the coefficients belong to a finite set, though they still satisfy the descend chain condition (DCC). Special cases of Theorem \ref{thm-discrete volume} have been previously established, including when $X$ is bounded \cite{HLQ-vol-ACC},  $(X,\Delta)$ is of complexity at most one \cites{MS-bdd-toric,LMS-bdd-dim-3}, or when $X$ is of dimension at most three \cites{LMS-bdd-dim-3,Z-mld^K-2}. 

To prove Theorem \ref{thm-Fanocone bounded}, we need to sort out a birational geometric condition that is more flexible than K-semistability so that it is preserved under small perturbations of the polarization. In particular, we want to include the case when $\xi$ is a rational perturbation of the Reeb vector coming from the K-semistable log Fano cone structure. This is why instead of proving Theorem \ref{thm-Fanocone bounded} directly, we aim to prove the following more general statement.

\begin{thm} \label{thm:polarized cone bdd}
Let $n\in \bN$ and let $I\subseteq [0,1]$ be a finite set. Let $\varepsilon,\theta>0$. Let $\cS$ be the set of $n$-dimensional polarized log Fano cone singularities $x\in (X,\Delta;\xi)$ with coefficients in $I$ such that
\[
\hvol(x,X,\Delta)\ge \varepsilon,\quad\mathrm{and}\quad 
\Theta(X,\Delta;\xi)\ge \theta.
\]
Then $\cS$ is bounded.
\end{thm}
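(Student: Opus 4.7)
The overall strategy is to reduce to the quasi-regular case by perturbing the Reeb vector, and then to invoke boundedness for log Fano orbifolds with both the delta invariant and the anti-canonical volume bounded below. Fix $(X,\Delta;\xi)\in\cS$ and let $T\subseteq\Aut(X,\Delta)$ be the torus generated by $\xi$. The Reeb cone $\ft^+\subseteq\ft_\bR$ is open and contains $\xi$, and both $\hvol_{X,\Delta}(\wt_{\xi'})$ and $\Theta(X,\Delta;\xi')$ vary continuously in $\xi'\in\ft^+$ (the former by convexity/homogeneity of the normalized volume, the latter because $\Theta$ is by construction stable under small perturbations of the polarization, which was the whole reason to introduce it in place of K-semistability). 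Consequently there exists a rational $\xi'\in\ft^+$ arbitrarily close to $\xi$ with $\hvol_{X,\Delta}(\wt_{\xi'})\ge\varepsilon/2$ and $\Theta(X,\Delta;\xi')\ge\theta/2$. After clearing denominators we may assume $\xi'$ generates a $\bG_m$-action, so that $(X,\Delta;\xi')$ is quasi-regular.

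\textbf{Step 2 (Reduction to Fano orbifolds).} A quasi-regular polarized log Fano cone is the affine cone over a log Fano orbifold $(Y,\Delta_Y+\Delta'_Y;L)$, where $\Delta'_Y$ records the stabilizers of the induced $\bG_m$-action and $-K_Y-\Delta_Y-\Delta'_Y\sim_\bQ\lambda L$ for some $\lambda>0$. Under this correspondence the bound $\hvol_{X,\Delta}(\wt_{\xi'})\ge\varepsilon/2$ translates into a lower bound on $\bigl((-K_Y-\Delta_Y-\Delta'_Y)^{\dim Y}\bigr)$ together with an upper bound on the orbifold index of $L$, while the bound $\Theta(X,\Delta;\xi')\ge\theta/2$ translates into $\delta(Y,\Delta_Y+\Delta'_Y)\ge c\theta$ for some explicit $c>0$, using the standard identification of $T$-invariant valuations on $X$ with valuations on $Y$. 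The coefficient set of $\Delta_Y+\Delta'_Y$ lies in a finite subset of $[0,1]\cap\bQ$ determined by $I$ and the bounded index.

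\textbf{Step 3 (Global boundedness reconstructs the cones).} Log Fano pairs of fixed dimension with coefficients in a fixed finite set, $\delta$ bounded below by a fixed positive constant, and anti-canonical volume bounded below, form a bounded family: this combines BAB with the existing boundedness of K-semistable Fanos with a volume lower bound, the case $\delta\ge\delta_0$ reducing to K-semistability by the usual $\delta$-invariant comparison. Together with the upper bound on the orbifold index of $L$, this yields boundedness of the polarized log Fano orbifold $(Y,\Delta_Y+\Delta'_Y;L)$, hence boundedness of $(X,\Delta)\cong\Spec\bigoplus_{m\ge 0}H^0(Y,mL)$. Finally, the rank of $T\subseteq\Aut(X,\Delta)$ is uniformly bounded on a bounded family of klt pairs, and the constraints $\hvol(\wt_{\xi'})\ge\varepsilon$ and $\Theta(\xi')\ge\theta$ cut out a compact subset of $\ft^+$; hence the original Reeb vector $\xi$ itself varies in a bounded region, completing the boundedness of $\cS$.

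\textbf{Main obstacle.} The essential difficulty is the quantitative passage in Step 2 from $\Theta(X,\Delta;\xi')\ge\theta/2$ on the cone to a positive lower bound on $\delta(Y,\Delta_Y+\Delta'_Y)$ on the base, because $\Theta$ is tested using $T$-equivariant data while $\delta$ is tested against all valuations on $Y$, so an equivariant-to-non-equivariant comparison principle finer than the one available for K-(semi)stability is required. A secondary subtlety is to carry out the rational perturbation of Step 1 with quantitative bounds independent of $(X,\Delta;\xi)\in\cS$, which requires uniformly controlling the geometry of the Reeb cone across the family; one naturally expects to handle this by first establishing a weaker, non-uniform version of boundedness and then bootstrapping it to the uniform statement via Step 3.
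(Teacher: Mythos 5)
Your Step 2 is where the argument breaks, and it breaks at exactly the point the paper singles out as the main difficulty. From $\hvol_{X,\Delta}(\wt_{\xi'})\ge\varepsilon/2$ you only get a lower bound on the product $r\cdot(-K_V-\Delta_V)^{n-1}$, where $-(K_V+\Delta_V)\sim_{\bR} rL$ on the orbifold base $V$ (equivalently, on the volume of the projective orbifold cone, Lemma \ref{lem:Sasaki vol as global vol}); it does \emph{not} give a lower bound on $(-K_V-\Delta_V)^{n-1}$ together with an upper bound on the index of $L$, because nothing in the hypotheses bounds $r$ from above. Example \ref{exam-weightedprojective} is a counterexample to your claimed translation: take $x\in X=\bA^n$ with $\xi=(\xi_1,\dots,\xi_n)$ pairwise coprime and nearly equal but large. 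Then $\hvol(x,X)=n^n$ and $\Theta(\bA^n;\xi)$ is bounded below uniformly, yet $V=\bP(\xi_1,\dots,\xi_n)$ has $\vol(-K_V)=(\xi_1+\cdots+\xi_n)^{n-1}/(\xi_1\cdots\xi_n)\to 0$ and the Cartier index of $L=\cO(1)$ is unbounded. So the polarized bases $(V,\Delta_V;L)$ (your $(Y,\Delta_Y+\Delta'_Y;L)$) do \emph{not} form a bounded family under the hypotheses of the theorem — the paper states this explicitly after Corollary \ref{c-bounded cone} — and Step 3, which invokes global boundedness of log Fanos with $\delta$ and volume bounded below, has nothing to apply to. The "Main obstacle" you flag (equivariant versus non-equivariant testing of $\delta$) is not the real obstruction; the lower bound on $\Theta$ does control $\alpha(V,\Delta_V)$ from below (Lemmas \ref{lem:compare alpha at vertex and infinity} and \ref{lem-compare theta and alpha}), but that cannot compensate for the loss of the volume bound on the base.

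The paper's route is genuinely different precisely to avoid this: instead of the quotient $V$, it works with the projective orbifold cone compactification $(\oX,\oDelta+V_\infty)$, whose anticanonical volume equals $\hvol_{X,\Delta}(\wt_\xi)$ and is therefore bounded on both sides, and whose \emph{local} alpha invariants at points of $X\subseteq\oX$ are bounded below. Since the alpha invariant of $(\oX,\oDelta+V_\infty)$ at points of $V_\infty$ can be tiny, $(\oX,\oDelta+V_\infty)$ is still not bounded, so one proves effective birationality (a local-alpha refinement of Jiang's argument), deduces log birational boundedness, upgrades it to boundedness in codimension one using bounded sub-klt complements whose negative part is supported on $V_\infty$, and finally recovers the affine $X$ (not $\oX$) by running an MMP on the bounded model and taking the ample model of a divisor supported on the transform of $V_\infty$ plus the exceptional locus. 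Your Step 1 (perturbation to a quasi-regular polarization, with the lower bounds on $\hvol$ and $\Theta$ essentially preserved) does match the paper's first reduction, but the core of your proof would need to be replaced along these lines.
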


Here $\Theta(X,\Delta;\xi)$ is the volume ratio of the log Fano cone singularity, see Definition \ref{def:vol ratio}. It serves as a local analog of the stability invariant of Fano varieties. A polarized log Fano cone singularity $x\in (X,\Delta;\xi)$ is K-semistable if and only if $\Theta(X,\Delta;\xi)=1$, thus Theorem \ref{thm:polarized cone bdd} implies Theorem \ref{thm-Fanocone bounded}. An observation from \cite{Z-mld^K-2} is that the condition of volume ratio having a lower bound should be the right generalization of the K-semistable condition to guarantee boundedness. 

Under the assumption that $\Theta(X,\Delta;\xi)\ge \theta$, the condition that $\hvol(x,X,\Delta)$ has a uniform positive lower bound is equivalent to $\hvol_{X,\Delta}(\wt_{\xi})$ being uniformly bounded from below by a positive number. When $\xi$ is rational, the latter condition is equivalent to a global condition on the orbifold base.  This leads to a special case for this version of Theorem \ref{thm:polarized cone bdd}.

\begin{cor}\label{c-bounded cone}
Fix a positive integer $n$, and two positive numbers $\alpha_0,\varepsilon$. Consider all $(n-1)$-dimensional log Fano pairs $(V,\Delta_V)$ such that there exist some $r>0$ and some Weil divisor $L$ on $V$ which satisfy 
\[
\alpha(V,\Delta_V)\ge \alpha_0, \, -(K_V+\Delta_V)\sim_\bQ rL \mbox{ \ \ and \ \ } r(-K_V-\Delta_V)^{n-1}\ge \varepsilon\,.
\]
Let $\cS$ be the set of log Fano cone singularities given by
\[
X={\rm Spec}\bigoplus_m H^0(V,mL)
\]
for all possible $(V,\Delta,L)$ as above and $\Delta$ the closure of the pull back of $\Delta_V$ on $X$.
Then $\cS$ is bounded. 
\end{cor}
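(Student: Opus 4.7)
The strategy is to deduce Corollary~\ref{c-bounded cone} directly from Theorem~\ref{thm:polarized cone bdd} by equipping each cone $X \in \cS$ with its canonical polarized Fano-cone structure: the $\bG_m$-action coming from the grading $R = \bigoplus_{m\ge 0} H^0(V, mL)$, with Reeb vector $\xi$ the generator of this torus. It then suffices to produce positive lower bounds, depending only on $n$, $\alpha_0$ and $\varepsilon$, for both $\hvol(x,X,\Delta)$ and the volume ratio $\Theta(X,\Delta;\xi)$; the coefficient condition of Theorem~\ref{thm:polarized cone bdd} is satisfied because $\Delta=\overline{\pi^*\Delta_V}$ shares its coefficients with $\Delta_V$ (implicitly assumed to lie in a fixed finite set).

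The first input is a direct cone computation. The toric valuation $\wt_\xi$ coincides with the divisorial valuation $\ord_V$ obtained by blowing up the vertex, and a standard adjunction argument on the blowup, using $-(K_V+\Delta_V)\sim_\bQ rL$, yields $A_{X,\Delta}(\wt_\xi)=r$ and $\vol_{X,x}(\wt_\xi)=L^{n-1}$. Therefore
$$\hvol_{X,\Delta}(\wt_\xi)\;=\;r^n L^{n-1}\;=\;r(-K_V-\Delta_V)^{n-1}\;\ge\;\varepsilon.$$
Once we also have $\Theta(X,\Delta;\xi)\ge \theta$, this gives $\hvol(x,X,\Delta)=\Theta(X,\Delta;\xi)\cdot\hvol_{X,\Delta}(\wt_\xi)\ge \theta\varepsilon$, which is the local-volume hypothesis of Theorem~\ref{thm:polarized cone bdd}.

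For the second input, I would translate the alpha-invariant hypothesis through the $\delta$-invariant: by the Fujita--Odaka inequality, $\delta(V,\Delta_V)\ge \frac{n}{n-1}\alpha(V,\Delta_V)\ge \frac{n}{n-1}\alpha_0$. On the cone with its canonical Reeb, any valuation $v$ competing with $\wt_\xi$ for the minimum of $\hvol_{X,\Delta}$ can be modelled, after passing to the associated graded (which is again $X$, by the stable-degeneration picture applied to the cone itself), by a valuation on the base $V$ together with a weighting along the cone direction. Computing $\hvol_{X,\Delta}(v)/\hvol_{X,\Delta}(\wt_\xi)$ for such $v$ in terms of log discrepancies and $S$-invariants of $(V,\Delta_V)$, and then invoking the $\delta$-lower bound, forces the ratio to stay above some $\theta=\theta(n,\alpha_0)>0$; taking the infimum produces $\Theta(X,\Delta;\xi)\ge \theta$.

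With both bounds established, Theorem~\ref{thm:polarized cone bdd} applies and yields the boundedness of $\cS$. The hard part will be the second step above: quantitatively transferring the global alpha-invariant bound on $(V,\Delta_V)$ into a volume-ratio lower bound on $(X,\Delta;\xi)$. Once this translation is in place, the rest of the argument reduces to the standard cone computations sketched here plus a direct invocation of Theorem~\ref{thm:polarized cone bdd}.
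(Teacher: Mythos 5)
Your overall strategy is the same as the paper's: give $X$ the canonical $\bG_m$-polarization coming from the grading, verify the two hypotheses of Theorem \ref{thm:polarized cone bdd}, and conclude. Your first input is correct and is exactly what the paper does: $\hvol_{X,\Delta}(\wt_\xi)=r^nL^{n-1}=r(-K_V-\Delta_V)^{n-1}\ge\varepsilon$ (cited there from \cite[Lemma 3.4]{Z-mld^K-2}), and combining this with a volume-ratio bound via the definition of $\Theta$ gives the local volume bound.

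The genuine gap is your second step. You yourself flag it as ``the hard part'': passing from $\alpha(V,\Delta_V)\ge\alpha_0$ to $\Theta(X,\Delta;\xi)\ge\theta(n,\alpha_0)$ is asserted, not proved. The sketch you give --- modelling an arbitrary competing valuation by ``a valuation on the base plus a weighting along the cone direction'' and estimating its normalized volume via $S$-invariants and the Fujita--Odaka bound $\delta\ge\frac{n}{n-1}\alpha$ --- is not a proof as written: it is not justified that the infimum of $\hvol_{X,\Delta}$ can be computed over such product-type valuations, nor is the claimed quantitative estimate carried out, and none of this machinery is set up in the paper. Moreover, this missing step is precisely Lemma \ref{lem-compare theta and alpha} of the paper, which gives $\Theta(X,\Delta;\xi)\ge\min\{1,\alpha(V,\Delta_V)\}^n$ for any quasi-regular polarized log Fano cone with orbifold base $(V,\Delta_V)$; its proof goes through the projective orbifold cone compactification (Lemma \ref{lem:compare alpha at vertex and infinity} plus the local alpha--volume inequality of Lemma \ref{lem:alpha-vol inequality}), not through $\delta$-invariants. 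Citing that lemma closes your gap immediately and yields $\hvol(x,X,\Delta)\ge\varepsilon\cdot(\min\{\alpha_0,1\})^n$, which is the paper's argument. One further small point: Theorem \ref{thm:polarized cone bdd} requires the coefficients to lie in a finite set, and the corollary's statement does not impose this explicitly; your parenthetical ``implicitly assumed'' is the right reading, but if you want a complete argument you should state this hypothesis rather than leave it implicit.
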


We note that, somewhat surprisingly, in Corollary \ref{c-bounded cone} all such $(V,\Delta,L)$ themselves are not bounded.

\begin{emp}[Strategy of the proof] \label{par:strategy}
The corresponding global result of
Theorem \ref{thm:polarized cone bdd} has been proved in \cite{Jia-Kss-Fano-bdd}, where it is shown that Fano varieties whose volume and alpha invariant are bounded away from zero form a bounded set. To prove the boundedness of the log Fano cone singularities $x\in (X,\Delta;\xi)$, one wants to reduce it to some boundedness question for projective Fano type varieties. 
A natural candidate would be the quotient
\[
 (V,\Delta_V):=((X,\Delta)\setminus \{x\})/\langle \xi\rangle
\]
(we may assume $\xi$ is rational after a perturbation and hence it generates a $\bG_m$-action). While one can show that the alpha invariant of the log Fano pair $(V,\Delta_V)$ is bounded from below (see \cites{Z-mld^K-1,Z-mld^K-2}), however, the volume $(-K_V-\Delta_V)^{n-1}$ could be arbitrarily small (see Example \ref{exam-weightedprojective}). In particular, $(V,\Delta_V)$ is not necessarily bounded, which posts a major challenge in the proof. 

A better candidate, first proposed in \cite{Z-mld^K-2}, is the projective orbifold cone compactification $(\overline{X},\overline{\Delta})$ of $(X,\Delta)$, which adds a divisor isomorphic to $V$ at infinity. One piece of evidence from \cite{Z-mld^K-2} is that if the local volume of the singularity $x\in (X,\Delta)$ is bounded from below, then so is the volume of log Fano pair $(\overline{X},\overline{\Delta})$. On the other hand, the alpha invariant of $(\overline{X},\overline{\Delta})$ can be arbitrarily small and the log Fano pairs $(\overline{X},\overline{\Delta})$ still do not form a bounded family as we vary the Reeb vector $\xi$. To get around this problem, the arguments in \cite{Z-mld^K-2} rely on an additional subtle assumption on the Koll\'ar components of the singularities, and it is not clear if this extra assumption is always satisfied. In this paper, we take a different path and follow a strategy which in spirit is closer to \cites{Birkar-bab-1,Jia-Kss-Fano-bdd}.

The first step is to prove a birational version of \cite{Jia-Kss-Fano-bdd}. More specifically, we first show (see Section \ref{ss:orbifold}) that if the volume ratio $\Theta(X,\Delta;\xi)$ is bounded from below, then away from the divisor at infinity, the alpha invariant of $(\oX,\oDelta)$ is also bounded from below, i.e. there is some uniform $\alpha_0>0$ such that $\alpha_{x_1}(\oX,\oDelta)\ge \alpha_0$ for all $x_1\in X$. By adapting the arguments of \cite{Jia-Kss-Fano-bdd}, we then show (see Section \ref{ss:log bir bdd}) that this together with boundedness of the volume imply that the set of projective orbifold cones $(\oX,\oDelta)$ is log birationally bounded (Definition \ref{defn:birational dominate sets}). In particular, they are birational to a bounded set of pairs.

The next step is to improve the log birational boundedness to boundedness in codimension one, see Section \ref{ss:bdd in codim 1}. Inspired by \cite{Birkar-bab-1}, and using the fact that $\alpha_{x_1}(\oX,\oDelta)\ge \alpha_0$ away from the divisor at infinity, we show that there is a uniform way to modify the bounded birational model $Y$ obtained from the previous step, so that the only exceptional divisor of the induced birational map  $\oX\dashrightarrow Y$ is the divisor at infinity. In other words, $Y\dashrightarrow \oX$ is close to a birational contraction except possibly over one divisor. This is the best we can hope for, as the divisor at infinity depends on the Reeb vector, and in general cannot be extracted on a bounded model. The main ingredient for this step is the construction of sub-klt bounded complements of $(\oX,\oDelta)$ with certain control on the negative part. This in turn relies on the boundedness of complements proved in \cite[Theorem 1.7]{Birkar-bab-1}, as well as the birationally bounded model constructed in the previous step.

Finally, to finish the argument,  we recover $(X,\Delta)$ by running a carefully chosen minimal model program on $Y$ and using the affineness of $X$ to show that $X$ is embedded as an open set of the ample model obtained from the minimal model program sequence. See Section \ref{ss:MMP step}.
\end{emp}

\noindent {\bf Acknowledgement.} CX is partially supported by NSF DMS-2139613, DMS-2201349 and a Simons Investigator grant. ZZ is partially supported by the NSF Grants DMS-2240926, DMS-2234736, a Clay research fellowship, as well as a Sloan fellowship.

\section{Preliminaries} \label{s:prelim}

\subsection{Notation and conventions}

We work over an algebraically closed field $\bk$ of characteristic $0$. We follow the standard terminology from \cites{KM98,Kol13}.

A sub-pair $(X,\Delta)$ consists of a normal variety $X$ together with an $\bR$-divisor $\Delta$ on $X$ (a priori, we do not require that $K_X+\Delta$ is $\bR$-Cartier). It is called a pair if $\Delta$ is effective. A log smooth pair $(Y,\Sigma)$ consists of a smooth variety $Y$ and a simple normal crossing divisor $\Sigma$ on $Y$. We say that a pair $(X,\Delta)$ is log Fano if $(X,\Delta)$ is klt (\cite[Definition 2.34]{KM98}) and $-(K_X+\Delta)$ is $\bR$-Cartier and ample. 

A singularity $x\in (X,\Delta)$ consists of a pair $(X,\Delta)$ and a closed point $x\in X$. We will always assume that $X$ is affine and $x\in \Supp(\Delta)$ (whenever $\Delta\neq 0$). We say that the singularity is klt if $(X,\Delta)$ is klt in a neighbourhood of $x$.

Given an $\bR$-divisor $\Delta$ on $X$ and a birational map $\varphi\colon Y\dashrightarrow X$, we denote the strict transform of $\Delta$ on the birational model $Y$ by $\Delta_Y$, i.e. $\Delta_Y=\varphi_*^{-1}\Delta$. If $\Delta$ is $\bR$-Cartier, the birational pullback $\varphi^*\Delta$ is defined as the $\bR$-divisor $f_*g^*\Delta$ where $f\colon W\to Y$, $g\colon W\to X$ is a common resolution.

When we refer to a constant $C$ as $C=C(n,\varepsilon,\cdots)$ it means $C$ only depends on $n,\varepsilon,\cdots$, etc.

\subsection{Local volumes} \label{ss:local vol}

We first briefly recall the definition of the local volumes of klt singularities \cite{Li-normalized-volume}. For this we need the notion of valuations. A \emph{valuation} over a singularity $x\in X$ is an $\bR$-valued valuation $v: K(X)^* \to \bR$ (where $K(X)$ denotes the function field of $X$) such that $v$ is centered at $x$ (i.e. if $f\in \cO_{X,x}$, then $v(f)>0$ if and only if $f\in \fm_x$) and $v|_{\bk^*}=0$. The set of such valuations is denoted as $\Val_{X,x}$. Let $x\in (X,\Delta)$ be a singularity and assume that $K_X+\Delta$ is $\bR$-Cartier. The \emph{log discrepancy} function
\[
A_{X,\Delta}\colon \Val_{X,x}\to \bR \cup\{+\infty\},
\]
is defined as in \cite{JM-val-ideal-seq} and \cite[Theorem 3.1]{BdFFU-log-discrepancy}. It generalizes the usual log discrepancies of divisors; in particular, for divisorial valuations, i.e. valuations of the form $\lambda\cdot \ord_F$ where $\lambda>0$ and $F$ is a prime divisor on some proper birational model $\pi\colon Y\to X$, we have 
\[
A_{X,\Delta}(\lambda\cdot \ord_F)=\lambda\cdot A_{X,\Delta}(F)=\lambda\cdot (1+\ord_F(K_Y-\pi^*(K_X+\Delta))).
\]
For klt singularities, one has $A_{X,\Delta}(v)>0$ for all $v\in \Val_{X,x}$. We denote by $\Val^*_{X,x}$ the set of valuations $v\in\Val_X$ with center $x$ and  $A_{X,\Delta}(v)<+\infty$. The \emph{volume} of a valuation $v\in\Val_{X,x}$ is defined as
\[
\vol(v)=\vol_{X,x}(v)=\limsup_{m\to\infty}\frac{\ell(\cO_{X,x}/\fa_m(v))}{m^n/n!},
\] 
where $n=\dim X$ and $\fa_m (v)$ denotes the valuation ideal, i.e.
\[
\fa_m (v):=\{f\in \cO_{X,x}\mid v(f)\ge m\}. 
\]

\begin{defn} \label{defn:local volume}
Let $x\in (X,\Delta)$ be an $n$-dimensional klt singularity. For any $v\in \Val_{X,x}$, we define the \emph{normalized volume} of $v$ as
\[
\hvol_{X,\Delta}(v):=
\begin{cases}
A_{X,\Delta}(v)^n\cdot\vol_{X,x}(v) \text{ \ \ \ \  if $A_{X,\Delta}(v)<+\infty$} \\
+\infty \text{ \ \ \ \ \ \ \ \ \ \ \ \ \ \ \ \ \ \ \ \  \ \  \ \  if $A_{X,\Delta}(v)=+\infty$}
                    \end{cases}\,.
\]
The \emph{local volume} of $x\in (X,\Delta)$ is defined as
\[
  \hvol(x,X,\Delta):=\inf_{v\in\Val^*_{X,x}} \hvol_{X,\Delta}(v)\, .
\]
\end{defn}

By \cite[Theorem 1.2]{Li-normalized-volume}, the local volume of a klt singularity is always positive.

\subsection{Log Fano cone singularities} \label{ss:log Fano cone}

Next we recall the definition of log Fano cone singularities and their K-semistability. 

\begin{defn}
Let $X=\Spec(R)$ be a normal affine variety and $\bT$ an algebraic torus (i.e. $\bT\cong \bG_m^r$ for some $r>0$). We say that a $\bT$-action on $X$ is {\it good} if it is effective and there is a unique closed point $x\in X$ that is in the orbit closure of any $\bT$-orbit. We call $x$ the vertex of the $\bT$-variety $X$. 
\end{defn}

Let $N:=N(\bT)=\Hom(\bG_m, \bT)$ be the co-weight lattice and $M=N^*$ the weight lattice. We have a weight decomposition 
\[
R=\oplus_{\alpha\in M} R_\alpha,
\]
and the action being good implies that $R_0=\bk$ and every $R_\alpha$ is finite dimensional. For $f\in R$, we denote by $f_\alpha$ the corresponding component in the above weight decomposition.

\begin{defn}
A \emph{Reeb vector} on $X$ is a vector $\xi\in N_\bR$ such that $\langle \xi, \alpha \rangle>0$ for all $0\neq \alpha\in M$ with $R_{\alpha}\neq 0$. The set $\ft^+_{\bR}$ of Reeb vectors is called the Reeb cone.
\end{defn}

For any $\xi\in \ft^+_{\bR}$, we can define a valuation $\wt_\xi$ by setting
\[
\wt_\xi (f):=\min\{\langle \xi, \alpha \rangle\mid \alpha\in M, f_\alpha\neq 0\}
\]
where $f\in R$. It is not hard to verify that $\wt_\xi\in \Val_{X,x}$.

\begin{defn}
A log Fano cone singularity is a klt singularity that admits a nontrivial good torus action. A polarized log Fano cone singularity $x\in (X,\Delta;\xi)$ consists of a log Fano cone singularity $x\in (X,\Delta)$ together with a Reeb vector $\xi$ (called a polarization).
\end{defn}

By abuse of convention, a good $\bT$-action on a klt singularity $x\in (X,\Delta)$ means a good $\bT$-action on $X$ such that $x$ is the vertex and $\Delta$ is $\bT$-invariant. Using terminology from Sasakian geometry, we say a polarized log Fano cone $x\in (X,\Delta;\xi)$ is \emph{quasi-regular} if $\xi$ generates a $\bG_m$-action (i.e. $\xi$ is a real multiple of some element of $N$); otherwise, we say that $x\in (X,\Delta;\xi)$ is \emph{irregular}. The torus generated by $\xi$ will be denoted by $\txi$.

\begin{defn}[{\cites{CS-Kss-Sasaki,CS-Sasaki-Einstein}, \cite[Theorem 2.34]{LX-stability-higher-rank}}]
We say that a polarized log Fano cone singularity $x\in (X,\Delta;\xi)$ is \mbox{\emph{K-semistable}} if 
\[
\hvol(x,X,\Delta)=\hvol_{X,\Delta}(\wt_\xi).
\]
\end{defn}

Log Fano cone singularities play a special role in the local stability theory of klt singularities, due to the following statement. It was originally known as the Stable Degeneration Conjecture.

\begin{thm}\label{thm:SDC}
Every klt singularity $x\in (X={\rm Spec}(R),\Delta)$ has a special degeneration to a K-semistable log Fano cone singularity $x_0\in (X_0,\Delta_0;\xi_v)$ with 
\[\hvol(x,X,\Delta)=\hvol(x_0,X_0,\Delta_0).\]
More precisely, up to rescaling, there is a unique valuation $v$ minimizing $\hvol_{X,\Delta}$, and $X_0={\rm Spec}({\rm Gr}_v(R))$. In addition, denote by $\Delta_0$ the degeneration of $\Delta$, and $\xi_v$  the Reeb vector induced by $v$, then $(X_0,\Delta_0;\xi_v)$ is a K-semistable log Fano cone. 
\end{thm}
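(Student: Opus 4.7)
The plan is to prove Theorem \ref{thm:SDC} in four stages: existence of a minimizer of $\hvol_{X,\Delta}$, finite generation of the associated graded ring, K-semistability of the induced degeneration, and uniqueness of the minimizer up to rescaling. For existence, I would take a minimizing sequence $\{v_i\}\subseteq \Val^*_{X,x}$ normalized so that $A_{X,\Delta}(v_i)=1$, pass to the associated graded sequences of valuation ideals $\fa_\bullet(v_i)$, and extract a limit using compactness of such graded sequences (in fixed ambient ideals) together with lower semi-continuity of log discrepancies for graded sequences in the Jonsson--Musta\c{t}\u{a} framework. A generic-limit construction then promotes this to an actual valuation $v$ achieving the infimum. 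To upgrade $v$ to a quasi-monomial valuation, I would use the rigidity coming from the minimizing property: $v$ must compute the log canonical threshold of its own graded sequence of ideals, which together with an induction on the strata of a suitable log resolution forces $v$ to be quasi-monomial on some log smooth model over $x$.

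The technical heart is finite generation of $\gr_v R$. I would approximate $v$ by divisorial valuations of the form $\lambda_m\ord_{S_m}$, where $S_m$ is a Koll\'ar component over $x$ chosen so that its rescaled normalized volume converges to $\hvol(x,X,\Delta)$ and so that the induced valuations converge to $v$ on a fixed log smooth reference model. Using boundedness of complements from \cite{Birkar-bab-1}, one bounds the Cartier indices appearing along the $S_m$ uniformly. Combining this with an adjoint-ideal / extension argument (and the fact that minimizers saturate a lct equality) allows one to reduce finite generation of $\gr_v R$ to a finite-index comparison, concluding that the graded ring is a finitely generated $\bk$-algebra.

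Once $\gr_v R$ is finitely generated, $X_0=\Spec(\gr_v R)$ carries a natural good torus action of rank equal to the rational rank of $v$, with vertex $x_0$, and the Reeb vector $\xi_v$ is precisely the element of the co-weight lattice inducing $v$ via the weight decomposition, so that $\wt_{\xi_v}$ coincides with the degeneration of $v$. The flat limit $\Delta_0$ is $\bT$-invariant and $(X_0,\Delta_0)$ is klt by upper semi-continuity of log canonical thresholds under flat degeneration. K-semistability then follows from the chain
\[
\hvol(x,X,\Delta)=\hvol_{X,\Delta}(v)=\hvol_{X_0,\Delta_0}(\wt_{\xi_v})\ge \hvol(x_0,X_0,\Delta_0)\ge \hvol(x,X,\Delta),
\]
where the last inequality uses lower semi-continuity of the local volume under flat degeneration; equality forces $\wt_{\xi_v}$ to minimize on $X_0$. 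Uniqueness of $v$ up to rescaling is then obtained by comparing any two minimizers on a common quasi-monomial dual complex and invoking strict convexity of $\hvol_{X,\Delta}$ along interior segments of a simplicial face, together with the quasi-monomiality already established.

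The main obstacle is unambiguously the finite generation step: there is no a priori reason for $\gr_v R$ to be finitely generated just from the minimizing property, and in fact this is false for general valuations. Extracting finite generation requires the full strength of the K-stability machinery — Koll\'ar component approximation, bounded complements, and adjoint-ideal arguments — whereas the remaining stages are essentially either compactness arguments (existence), structural (the degeneration and its torus action), or convexity (uniqueness).
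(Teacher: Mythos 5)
You should first note that the paper does not reprove this statement: its ``proof'' is a citation of the stable degeneration literature (\cites{Blu-minimizer-exist,LX-stability-higher-rank,Xu-quasi-monomial,XZ-minimizer-unique,XZ-SDC}, with the real-coefficient extension in \cite{Z-mld^K-2}), so the relevant comparison is with those references. Your outline does reproduce the broad architecture of that body of work (existence of a minimizer by a compactness/generic-limit argument, quasi-monomiality, finite generation of $\gr_v R$ as the technical heart, semistability of the degeneration via lower semicontinuity of the local volume, then uniqueness), and your identification of finite generation as the main obstacle is accurate. But as a proof sketch it has concrete missteps beyond mere compression. First, the kltness of the central fiber $(X_0,\Delta_0)$ does \emph{not} follow from ``upper semi-continuity of log canonical thresholds under flat degeneration'': semicontinuity goes the wrong way (singularities can only get worse on the special fiber), and establishing that the degeneration is klt is itself one of the hard points; in \cite{XZ-SDC} it is part of Theorem 4.1, which requires knowing that the minimizer is a monomial lc place of a special $\bQ$-complement, not a general degeneration principle. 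Relatedly, your finite generation paragraph (Koll\'ar-component approximation, bounding Cartier indices, a ``finite-index comparison'') is a heuristic rather than the actual mechanism, which runs through lc places of special complements and boundedness of complements.

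Second, the uniqueness step as you describe it would not work: $\hvol_{X,\Delta}$ is not known to be (strictly) convex on simplices of quasi-monomial valuations in a dual complex, and two minimizers need not sit on a common face in any useful way. The argument in \cite{XZ-minimizer-unique} instead passes to the filtrations induced by the two minimizers and uses strict convexity of the (normalized) volume along geodesics of filtrations to force the filtrations, and hence the valuations, to agree up to rescaling. So while your roadmap points at the right references and the right division of labor, at least two of its steps (klt central fiber via semicontinuity, uniqueness via simplicial convexity) are not repairable as stated and must be replaced by the complement-theoretic and filtration-geodesic arguments, respectively.
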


We call $x_0\in (X_0,\Delta_0;\xi_v)$ the K-semistable log Fano cone degeneration of $x\in (X,\Delta)$. It is unique up to the rescaling of the Reeb vector $\xi_v$.

\begin{proof}
See \cites{Blu-minimizer-exist,LX-stability-higher-rank,Xu-quasi-monomial,XZ-minimizer-unique,XZ-SDC} for the case of rational coefficients and the extension to real coefficients in \cite{Z-mld^K-2}.
\end{proof}

\begin{defn} \label{def:vol ratio}
The \emph{volume ratio} of a polarized log Fano cone singularity $x\in (X,\Delta;
\xi)$ is defined to be
\[
\Theta(X,\Delta;
\xi):=\frac{\hvol(x,X,\Delta)}{\hvol_{X,\Delta}(\wt_\xi)}.
\]
\end{defn}

By definition, $0<\Theta(X,\Delta;
\xi)\le 1$ and $\Theta(X,\Delta;
\xi)=1$ if and only if $x\in (X,\Delta;
\xi)$ is K-semistable.

\subsection{Orbifold cones} \label{ss:orbifold}

Every quasi-regular polarized log Fano cone singularity has a natural projective orbifold cone compactification. This provides a convenient way to think about these singularities. In this subsection, we recall this construction and relate some invariants of the singularities with those of the projective orbifold cones. For more background, see \cite{Kol-Seifert-bundle} or \cite[Section 3.1]{Z-mld^K-2}. 

Let $x\in (X,\Delta;\xi)$ be a quasi-regular log Fano cone singularity. By definition, the $\txi \cong \bG_m$-action on $X\setminus \{x\}$ has finite stabilizers, hence the quotient map 
\[X\setminus \{x\} \to V:=(X\setminus \{x\})/\txi\] 
is a Seifert $\bG_m$-bundle (in the sense of \cite{Kol-Seifert-bundle}), and there is an ample $\bQ$-divisor $L$ on $V$ such that (see \cite[Theorem 7]{Kol-Seifert-bundle})
\[
X={\rm Spec}\bigoplus_{m\in \bN}H^0(V, \lfloor mL \rfloor) \, .
\]
The zero section $V_0$ of this Seifert $\bG_m$-bundle gets contracted to the closed point $x\in X$ (as a valuation, $\ord_{V_0}$ is proportional to $\wt_\xi$), thus we can compactify $X$ to $\oX$ by adding the infinity section $V_{\infty}$. Let $\oDelta$ be the closure of $\Delta$ on $\oX$. We call $(\oX,\oDelta+V_{\infty})$ the (\emph{projective}) \emph{orbifold cone compactification} of $x\in (X,\Delta;\xi)$ ({\it cf.} \cite[Section 3.1]{Z-mld^K-2}).

Note that $(\oX,\oDelta+V_\infty)$ is plt and $-(K_{\oX}+\oDelta+V_{\infty})$ is ample, see \cite[Lemma 3.3]{Z-mld^K-2} or \cite{Kol-Seifert-bundle}. By adjunction along $V_\infty\cong V$, we may write 
\[
(K_{\oX}+\oDelta+V_{\infty})|_{V_{\infty}} = K_V+\Delta_V
\]
for some effective divisor $\Delta_V$. Then $(V,\Delta_V)$ is a klt log Fano pair. We call $(V,\Delta_V)$ the orbifold base of the singularity $x\in (X,\Delta;\xi)$. There exists some $r>0$ such that $-(K_{V}+\Delta_V)\sim_{\bR} rL$, and we have 
\begin{equation}\label{eq-cone polarization}
-(K_{\oX}+\oDelta+V_{\infty})\sim_\bR rV_{\infty} \, .
\end{equation}

A subtle feature of the local boundedness problem is that the orbifold bases do not belong to a bounded set; already their volumes can be arbitrarily small when we fix the singularity $x\in (X,\Delta)$ and vary the Reeb vector $\xi$.

\begin{expl}\label{exam-weightedprojective}
Let $(x\in X)=(0\in \bA^n)$ and $\xi=(\xi_1,\dots,\xi_n)$ for some pairwise coprime positive integers $\xi_i$ $(1\le i\le n)$. Assume that $n\ge 3$ and $\xi_1\le \cdots \le \xi_n$. Then 
\[ 
\oX=\mathbb{P}(1,\xi_1,\dots,\xi_n),\quad V\cong \mathbb{P}(\xi_1,\dots,\xi_n),\quad \Delta_V=0,\quad \mathrm{and} \quad L=\cO(1).
\]
We can easily compute 
\[
\Theta(\bA^n;\xi)=\frac{n^n}{\hvol_{\bA^n}({\rm wt}(\xi))}=\frac{\xi_1\cdots \xi_n\cdot n^n}{(\xi_1+\cdots+\xi_n)^n} \, .
\]
So $\Theta(\bA^n,\xi)$ has a positive lower bound if and only if $\frac{\xi_n}{\xi_1}$ has an upper bound. Using \cite[Corollary 7.16]{BJ-delta}, one can show that this is also equivalent to the condition that the $\alpha$-invariant $\alpha(V)$ defined below in Definition \ref{def:alpha} has a positive lower bound.

On the other hand, 
\[
\vol(-K_V)=\frac{(\xi_1+\cdots+\xi_n)^{n-1}}{\xi_1\cdots \xi_n }\, .
\]
So if $\frac{\xi_n}{\xi_1}$ is bounded from above, then $\vol(-K_V)$ is bounded away from zero if and only if all the weights $\xi_i$ are bounded from above.  
\end{expl}

A key observation from \cite{Z-mld^K-2}, following a direct calculation using \eqref{eq-cone polarization}, is that the volume of a log Fano cone singularity is more closely related to the global volume of its projective orbifold cone compactification (rather than the orbifold base).

\begin{lem} \label{lem:Sasaki vol as global vol}
Notation as above. Then we have
\[
\hvol_{X,\Delta}(\wt_\xi)=\vol(-(K_{\oX}+\oDelta+V_{\infty})).
\]
In particular, if $\Theta(X,\Delta;\xi)\ge \theta>0$, then $\vol(-(K_{\oX}+\oDelta+V_{\infty}))\le n^n \theta^{-1}$.
\end{lem}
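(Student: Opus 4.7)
My plan is to compute both sides of the claimed identity directly in terms of intersection numbers on the orbifold base $(V,\Delta_V)$, using the Seifert $\bG_m$-bundle description of $X$.

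First, note that $\wt_\xi$ is a positive scalar multiple of the divisorial valuation $\ord_{V_0}$, where $V_0$ is the zero section of the Seifert bundle (realized as an exceptional divisor on the partial resolution $\sigma\colon \oX'\to \oX$ obtained by blowing up the vertex $x$). Since $\hvol_{X,\Delta}$ is invariant under positive rescaling of the valuation, it suffices to compute $\hvol_{X,\Delta}(\ord_{V_0})$. The grading $R=\bigoplus_{k\ge 0} H^0(V,\lfloor kL\rfloor)$ realizes $\ord_{V_0}$ as the degree valuation, so the valuation ideals are $\fa_m(\ord_{V_0})=\bigoplus_{k\ge m} H^0(V,\lfloor kL\rfloor)$, and asymptotic orbifold Riemann--Roch on $V$ gives
\[
\vol(\ord_{V_0}) = \lim_{m\to\infty} \frac{n!}{m^n}\sum_{k=0}^{m-1} h^0(V,\lfloor kL\rfloor) = (L^{n-1}).
\]

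Next, I would compute $A_{X,\Delta}(\ord_{V_0})$. The partial resolution $\oX'$ is an orbifold $\bP^1$-bundle over $V$ containing $V_0$ and $V_\infty$ as disjoint sections, and $(\oX',\sigma_*^{-1}\oDelta+V_0+V_\infty)$ is plt along $V_0$ since the germ of $X$ at $x$ is a cone over $(V,\Delta_V)$. The plt adjunction along $V_0$ recovers $(V,\Delta_V)$, and combining this with $-(K_V+\Delta_V)\sim_\bR rL$ yields $A_{X,\Delta}(\ord_{V_0})=r$. Consequently,
\[
\hvol_{X,\Delta}(\wt_\xi) = A_{X,\Delta}(\ord_{V_0})^n\cdot \vol(\ord_{V_0}) = r^n\,(L^{n-1}).
\]

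On the compactification side, \eqref{eq-cone polarization} gives $-(K_{\oX}+\oDelta+V_\infty)\sim_\bR rV_\infty$, so
\[
\vol(-(K_{\oX}+\oDelta+V_\infty)) = r^n\,(V_\infty^n).
\]
The normal $\bR$-Cartier divisor $V_\infty|_{V_\infty}$ is identified with $L$ on $V\cong V_\infty$ by construction of the orbifold cone compactification, giving $(V_\infty^n)=(L^{n-1})$ and hence the main equality. For the ``in particular'' clause, I would invoke the universal upper bound $\hvol(x,X,\Delta)\le n^n$ for klt singularities together with the definition $\Theta(X,\Delta;\xi)=\hvol(x,X,\Delta)/\hvol_{X,\Delta}(\wt_\xi)\ge \theta$, which rearranges to $\vol(-(K_{\oX}+\oDelta+V_\infty))=\hvol_{X,\Delta}(\wt_\xi)\le n^n/\theta$.

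The main obstacle is purely bookkeeping: keeping track of the orbifold indices appearing in $\lfloor kL\rfloor$, fixing the proportionality constant between $\wt_\xi$ and $\ord_{V_0}$, and matching the normalization of the normal $\bQ$-bundle of $V_\infty$ with $L$. Once these are synchronized, both sides collapse to $r^n\,(L^{n-1})$ and the identity becomes a tautology.
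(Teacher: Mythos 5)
Your proof is correct and is essentially the argument behind the result the paper cites: the paper's proof simply refers to \cite[Lemma 3.4]{Z-mld^K-2} for the equality (whose content is exactly your computation that both sides reduce to $r^n\,(L^{n-1})$ via $A_{X,\Delta}(\ord_{V_0})=r$, $\vol(\ord_{V_0})=(L^{n-1})$, and $V_\infty|_{V_\infty}=L$) and to \cite[Theorem 1.6]{LX-cubic-3fold} for the bound $\hvol(x,X,\Delta)\le n^n$ used in the ``in particular'' clause, which is the same rearrangement you perform. So you have just unwound the cited references rather than taken a different route, and the details (scale-invariance of $\hvol$, plt adjunction along $V_0$, and the orbifold Riemann--Roch summation) all check out.
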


\begin{proof}
The equality is \cite[Lemma 3.4]{Z-mld^K-2}. The other implication then follows from \cite[Theorem 1.6]{LX-cubic-3fold}.
\end{proof}

We next relate the volume ratio with the $\alpha$-invariants of the orbifold base or the projective orbifold cone. First we recall some definitions.

\begin{defn} \label{def:alpha}
Let $(X,\Delta)$ be a projective klt pair and let $L$ be a big $\bR$-Cartier $\bR$-divisor. We define the $\alpha$-invariant $\alpha(X,\Delta;L)$ as
\[
\alpha(X,\Delta;L):=\inf\left\{\lct(X,\Delta;D) \mid 0\le D\sim_{\bR} L \right\},
\]
where $\lct (X,\Delta;D)$ denotes the log canonical threshold, i.e. the largest number $\lambda$ such that $(X,\Delta+\lambda D)$ is log canonical.
For any projective pair $(X,\Delta)$ that is klt at a closed point $x\in X$, we can similarly define the log canonical threshold $\lct_x (X,\Delta;D)$ at $x$ and the local $\alpha$-invariant
\[
\alpha_x(X,\Delta;L):=\inf\left\{\lct_x (X,\Delta;D) \mid 0\le D\sim_{\bR} L \right\}.
\]
When the pair $(X,\Delta)$ is clear from the context, we will just write $\alpha(L)$ and $\alpha_x(L)$. For a log Fano pair $(X,\Delta)$, we define $\alpha(X,\Delta):=\alpha(X,\Delta;-K_X-\Delta)$ and similarly $\alpha_x (X,\Delta)$.
\end{defn}

While at a point on the infinity divisor $V_{\infty}$, the $\alpha$-invariant of the projective orbifold cone $(\oX,\oDelta)$ could be very small when $r$ is large in \eqref{eq-cone polarization}, the following result roughly says that for any point outside the infinity divisor, the local $\alpha$-invariant of $(\oX,\oDelta)$ is bounded by the (global) $\alpha$-invariant of the orbifold base.

\begin{lem} \label{lem:compare alpha at vertex and infinity}
Let $x\in (X,\Delta;\xi)$ be a quasi-regular polarized log Fano cone singularity. Let $(\oX,\oDelta+V_{\infty})$ be its projective orbifold cone compactification, and let $(V,\Delta_V)$ be the orbifold base. Then we have 
\[
\alpha_{x_1}(\oX,\oDelta+V_{\infty}) \ge \min\{1,\alpha(V,\Delta_V)\}
\]
for all closed point $x_1\in X$. In particular, $\alpha_x(\oX,\oDelta+V_{\infty}) = \min\{1,\alpha(V,\Delta_V)\}$.
\end{lem}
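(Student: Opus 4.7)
The plan is to exploit the $\bG_m$-action coming from the quasi-regular polarization to reduce the problem to $\bG_m$-invariant test divisors at the vertex $x$, and then to apply the orbifold cone formula for the log canonical threshold at $x$. By continuity one may restrict attention to $\bQ$-divisor tests.

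First I would reduce to the case $x_1 = x$ with $\bG_m$-invariant $D$. Fix an effective $D\sim_\bR rV_{\infty}$ and a closed point $x_1\in X$. Because $\oDelta+V_{\infty}$ is $\bG_m$-invariant, for every $t\in\bG_m$
\[
\lct_{x_1}(\oX,\oDelta+V_{\infty};D) = \lct_{t\cdot x_1}(\oX,\oDelta+V_{\infty};t\cdot D).
\]
Since $x_1\in X$, the orbit $t\cdot x_1\to x$ as $t\to 0$, and the flat limit $D_0:=\lim_{t\to 0}(t\cdot D)$ is a $\bG_m$-invariant effective $\bR$-divisor still in the class $rV_{\infty}$ (it is the divisor of the lowest-weight component of the section defining $D$). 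Lower semicontinuity of $\lct$ in the flat family then gives $\lct_{x_1}(\oX,\oDelta+V_{\infty};D)\ge \lct_x(\oX,\oDelta+V_{\infty};D_0)$, so it suffices to prove the bound for $\bG_m$-invariant $D_0$ at $x$.

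Next I would apply the orbifold cone formula. Any $\bG_m$-invariant effective $D_0\sim_\bR rV_{\infty}$ splits as $D_0 = \tilde D_V + aV_{\infty}$, where $a\ge 0$ and $\tilde D_V$ is the closure on $\oX$ of the Seifert pullback of an effective $\bR$-divisor $D_V$ on $V$. Restricting the equivalence to $V_{\infty}\cong V$ and using $V_{\infty}|_{V_{\infty}}\sim_\bR L$ (which follows from adjunction together with $-(K_V+\Delta_V)\sim_\bR rL$) gives $D_V\sim_\bR (r-a)L$ with $0\le a\le r$. Since $V_{\infty}\cap X=\emptyset$, the summand $aV_{\infty}$ is irrelevant at $x$, and hence $\lct_x(\oX,\oDelta+V_{\infty};D_0)=\lct_x(X,\Delta;\tilde D_V)$. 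Setting $s:=r-a$, inversion of adjunction applied to the Koll\'ar component $V_0$ extracted by blowing up $x$---for which $A_{X,\Delta}(V_0)=r$, $\ord_{V_0}(\tilde D_V)=s$, the different equals $\Delta_V$, and $\tilde D_V|_{V_0}=D_V$---produces the cone formula
\[
\lct_x(X,\Delta;\tilde D_V)\ =\ \min\!\left\{\tfrac{r}{s},\ \lct(V,\Delta_V;D_V)\right\}.
\]
Because $\tfrac{r}{s}D_V\sim_\bR -(K_V+\Delta_V)$, the definition of $\alpha$ gives $\lct(V,\Delta_V;D_V)\ge \tfrac{r}{s}\alpha(V,\Delta_V)$, and consequently
\[
\lct_x(X,\Delta;\tilde D_V)\ \ge\ \tfrac{r}{s}\min\{1,\alpha(V,\Delta_V)\}\ \ge\ \min\{1,\alpha(V,\Delta_V)\},
\]
proving the inequality. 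The ``in particular'' equality at $x_1=x$ then follows by testing with $a=0$ (so $s=r$) and $D_V$ approaching the infimum defining $\alpha(V,\Delta_V)$.

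The main obstacle will be establishing the cone formula above. The integral cone case is classical, but here $(V,\Delta_V,L)$ carries a genuine orbifold structure (non-integral $\Delta_V$, $\bQ$-Cartier polarization), so one must pass carefully through the Seifert bundle to check $A_{X,\Delta}(V_0)=r$, to identify the different along $V_0$ with $\Delta_V$, and to verify the restriction identity $\tilde D_V|_{V_0}=D_V$. Once this cone dictionary is in place, the remainder is a routine combination of inversion of adjunction and the definition of $\alpha$.
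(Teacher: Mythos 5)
Your route is correct in outline but genuinely different from the paper's. You use the $\bG_m$-action to degenerate an arbitrary test divisor at $x_1\in X$ to an invariant divisor at the vertex (via semicontinuity of $\lct$ along the orbit/flat limit), decompose the invariant divisor as $\tilde D_V+aV_\infty$, and then invoke an orbifold-cone formula $\lct_x(X,\Delta;\tilde D_V)=\min\{r/s,\lct(V,\Delta_V;D_V)\}$ proved by inversion of adjunction along the Koll\'ar component $V_0$. The paper instead argues globally and without any degeneration or blow-up: if $\lct_{x_1}<\min\{1,\alpha(V,\Delta_V)\}$ for some $x_1\in X$, then (after removing $V_\infty$ from the test divisor, using ampleness of $V_\infty$) the non-klt locus of $(\oX,\oDelta+V_\infty+tD)$ contains the two disjoint pieces $x_1$ and $V_\infty$; since $-(K_{\oX}+\oDelta+V_\infty+tD)$ is ample (here $t<1$ is used), Koll\'ar--Shokurov connectedness forces failure of pltness along $V_\infty$, and adjunction to $V_\infty$ contradicts $t<\alpha(V,\Delta_V)$. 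The paper's argument treats all $x_1\in X$ at once, needs no equivariant limit and no cone dictionary; your argument, once the cone formula is in place, has the advantage of giving the exact value of $\lct_x$ at the vertex (so the ``in particular'' equality falls out immediately), and it isolates where quasi-regularity enters.

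Three places in your sketch still owe real work, all fixable but not free. First, the cone formula itself (which you flag): one must check $A_{X,\Delta}(V_0)=r$, $\ord_{V_0}(\tilde D_V)=s$ when $D_V\sim_\bR sL$, and $\mathrm{Diff}_{V_0}(\Delta_Y+t\,q^*D_V)=\Delta_V+tD_V$ through the Seifert-bundle multiplicities, and then apply inversion of adjunction along $V_0$; this is standard (it is the same Koll\'ar-style computation the paper cites for $\ord_{V_0}(D)=A_{X,\Delta}(V_0)$), but it is precisely the work the paper's connectedness trick avoids. Second, the assertion that the flat limit $D_0$ stays in the class $rV_\infty$ is not automatic for $\bR$-divisors (it is not the divisor of a section); you need a specialization-preserves-linear-equivalence argument, or a reduction to $\bQ$-divisors, and note that ``restrict to $\bQ$-divisor tests by continuity'' does not literally apply when $\oDelta$ has irrational coefficients, so the degeneration should be run for $\bR$-divisors directly. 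Third, in the decomposition $D_0=\tilde D_V+aV_\infty$ only the sum is $\bR$-Cartier; this is harmless because $V_\infty\cap X=\emptyset$, so $\tilde D_V$ is $\bR$-Cartier near $x$ and $\ord_{V_0}(\tilde D_V)$ is well defined, but it should be said. With these points supplied, your argument closes, and degenerate cases ($s=0$) are trivial.
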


\begin{proof}
Let $D_V\sim_\bR -(K_V+\Delta_V)$ be an effective $\bR$-divisor and let $D$ be the closure in $\oX$ of its pullback to $X\setminus\{x\}$. Then we have $D\sim_\bR -(K_{\oX}+\oDelta+V_{\infty})$ and $\ord_{V_0} (D) = A_{X,\Delta}(V_0)$ (for usual cones see \cite[Proposition 3.14]{Kol13}, the general case follows from the computations in \cite[Section 4]{Kol-Seifert-bundle}). In particular, $\alpha_x(\oX,\oDelta+V_{\infty}) \le 1$. Since $X\setminus\{x\}\to V$ is a Seifert $\bG_m$-bundle, the pair $(V,\Delta_V+tD_V)$ is log canonical if and only if $(\oX,\oDelta+tD)$ is log canonical on $X\setminus\{x\}$. Thus we also get $\alpha_x(\oX,\oDelta+V_{\infty})\le \alpha(V,\Delta_V)$.

Suppose that there exists some effective $\bR$-divisor $D\sim_\bR -(K_{\oX}+\oDelta+V_{\infty})$ such that $t:=\lct_{x_1} (\oX,\oDelta+V_{\infty};D)<\min\{1,\alpha(V,\Delta_V)\}$ for some $x_1\in X$. Since $x_1\not\in V_\infty$ and $V_\infty$ is ample, we may assume that $V_\infty\not\in \Supp(D)$. The non-klt locus of the pair $(\oX,\oDelta+V_{\infty}+tD)$ thus contains at least $x_1$ and $V_\infty$. Since $-(K_{\oX}+\oDelta+V_{\infty}+tD)$ is ample, Koll\'ar-Shokurov's connectedness lemma implies that $(\oX,\oDelta+V_{\infty}+tD)$ is not plt along $V_\infty$. It then follows from adjunction that $(V,\Delta_V+tD|_{V_\infty})$ (we identify $V$ with $V_\infty$) is not klt, and hence $\alpha(V,\Delta_V)\le t$, a contradiction. In other words, we have 
\[\alpha_{x_1}(\oX,\oDelta+V_{\infty}) \ge \min\{1,\alpha(V,\Delta_V)\}.
\]
Combined with the upper bounds of $\alpha_x(\oX,\oDelta+V_{\infty})$ we obtain above, this also gives $\alpha_x(\oX,\oDelta+V_{\infty}) = \min\{1,\alpha(V,\Delta_V)\}$.
\end{proof}

We now relate the volume ratio with the $\alpha$-invariant of the orbifold base.

\begin{lem} \label{lem-compare theta and alpha}
There exists some constant $c>0$ depending only on the dimension such that for any quasi-regular polarized log Fano cone singularity $x\in (X,\Delta;\xi)$ of dimension $n$ with orbifold base $(V,\Delta_V)$, we have
\[
c\cdot \alpha(V,\Delta_V)\ge \Theta(X,\Delta;\xi)\ge  \min\{1,\alpha(V,\Delta_V)\}^n.
\]
\end{lem}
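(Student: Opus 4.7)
Write $\alpha_0 := \min\{1,\alpha(V,\Delta_V)\}$ and set $L := -(K_{\oX}+\oDelta+V_\infty)$. By Lemma \ref{lem:compare alpha at vertex and infinity}, $\alpha_x(\oX,\oDelta+V_\infty) = \alpha_0$, and by Lemma \ref{lem:Sasaki vol as global vol}, $\hvol_{X,\Delta}(\wt_\xi) = \vol(L)$. Since $\Theta(X,\Delta;\xi) = \hvol(x,X,\Delta)/\vol(L)$, both inequalities amount to controlling the ratio $\hvol_{X,\Delta}(v)/\vol(L)$ for appropriately chosen $v \in \Val^*_{X,x}$.

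For the lower bound, fix an arbitrary $v \in \Val^*_{X,x}$ and introduce $T_L(v) := \sup\{v(D) : 0 \leq D \sim_\bR L\}$. The alpha bound at $x$ forces $\alpha_0 \cdot T_L(v) \leq A_{X,\Delta}(v)$, since for every effective $D \sim_\bR L$ the pair $(\oX,\oDelta+V_\infty+\alpha_0 D)$ is lc at $x$. I would complement this with a Fujita-type local-global volume inequality $T_L(v)^n \cdot \vol_{X,x}(v) \geq \vol(L)$, proved by dimension counting: whenever $t^n < \vol(L)/\vol_{X,x}(v)$, the asymptotic $h^0(\oX,mL) \sim \vol(L)\, m^n/n!$ strictly exceeds $\ell(\cO_{X,x}/\fa_{\lceil tm \rceil}(v)) \sim \vol_{X,x}(v)\, t^n m^n/n!$ for $m \gg 0$, so the restriction map $H^0(\oX,mL) \to \cO_{X,x}/\fa_{\lceil tm \rceil}(v)$ (via a local trivialization of $mL$ near $x$) has nontrivial kernel, producing an effective $D \sim mL$ with $v(D) \geq tm$ and hence $T_L(v) \geq t$. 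Combining the two inputs, $\hvol_{X,\Delta}(v) = A_{X,\Delta}(v)^n \vol_{X,x}(v) \geq \alpha_0^n T_L(v)^n \cdot \vol(L)/T_L(v)^n = \alpha_0^n \vol(L)$, and taking the infimum over $v$ yields $\Theta \geq \alpha_0^n$.

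For the upper bound, since $\Theta \leq 1$, the inequality is trivial when $\alpha := \alpha(V,\Delta_V) \geq 1$, so assume $\alpha < 1$. I would exhibit a $\bT$-invariant valuation on $X$ with small $\hvol$: choose a quasi-monomial $v_0$ on $V$ and an effective $D_V \sim_\bR -(K_V+\Delta_V)$ with $A_{V,\Delta_V}(v_0)/v_0(D_V) \leq \alpha + \epsilon$, extend $v_0$ via the Seifert $\bG_m$-bundle $X\setminus\{x\} \to V$ to a $\bT$-invariant valuation $v_0^{\mathrm{ext}}$ on $R = \bigoplus_m H^0(V,mL)$ by applying $v_0$ to each weight piece, and form the quasi-monomial perturbation $v_s := \wt_\xi + s \cdot v_0^{\mathrm{ext}}$ for $s > 0$ (the $\wt_\xi$-summand keeps $v_s$ centered at the cone vertex). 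Linearity of log discrepancy on quasi-monomial valuations gives $A_{X,\Delta}(v_s) = A_{X,\Delta}(\wt_\xi) + s \cdot A_{V,\Delta_V}(v_0)$, while the explicit grading on $R$ expresses $\vol_{X,x}(v_s)$ as an integral over the $\wt_\xi$-weight of the restricted volumes $\vol_V(mL - t v_0)$. Optimizing $s$ proportional to $A_{V,\Delta_V}(v_0)/v_0(D_V)$ and using $D_V$ to control the drop of these restricted volumes yields $\hvol_{X,\Delta}(v_s) \leq c(n)\bigl(A_{V,\Delta_V}(v_0)/v_0(D_V)\bigr)^n \vol(L) \leq c(n)\alpha^n\vol(L) \leq c(n)\alpha\vol(L)$ (the last step using $\alpha < 1$); then $\hvol(x,X,\Delta) \leq \hvol_{X,\Delta}(v_s)$ gives $\Theta \leq c(n)\alpha$. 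The main obstacle is the explicit computation of $\vol_{X,x}(v_s)$ and the optimization in the upper bound, together with verifying the additivity of log discrepancies for the quasi-monomial valuations $v_s$ in the singular cone setting; the lower bound is more streamlined and rests on the standard Fujita-type volume comparison.
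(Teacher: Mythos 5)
Your lower-bound argument is correct and is essentially the paper's route: Lemma \ref{lem:compare alpha at vertex and infinity} plus Lemma \ref{lem:Sasaki vol as global vol}, with your pseudo-effective-threshold/dimension-count step being exactly a proof of the paper's Lemma \ref{lem:alpha-vol inequality}.

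The upper bound, however, contains a genuine error. Your key intermediate claim is $\hvol_{X,\Delta}(v_s)\le c(n)\bigl(A_{V,\Delta_V}(v_0)/v_0(D_V)\bigr)^n\vol(L)$; taking $(v_0,D_V)$ nearly computing $\alpha(V,\Delta_V)$ this would give $\Theta(X,\Delta;\xi)\le c(n)\,\alpha(V,\Delta_V)^n$, and that inequality is false. Test it on Example \ref{exam-weightedprojective} with $x\in X=\bA^n$, $\Delta=0$, $\xi=(1,\dots,1,k)$: then $\Theta=\tfrac{n^n k}{(n-1+k)^n}\asymp k^{1-n}$, while $V=\bP(1,\dots,1,k)$ has $\alpha(V)\le\tfrac{1}{n-1+k}$ (take $(n-1+k)\{x_1=0\}\sim -K_V$), so $\alpha(V)^n\asymp k^{-n}$ and $\Theta/\alpha(V)^n\ge n^n k\to\infty$. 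Concretely, in this example $c(n)\alpha^n\vol(L)\asymp k^{-1}\to 0$, while no valuation on $\bA^n$ has normalized volume below $n^n$, so the claimed estimate for $v_s$ cannot hold no matter how you optimize $s$. What your construction can at best yield is a first-power bound $\hvol(v_s)\lesssim c(n)\,\alpha\,\vol(L)$ (indeed, in the example above the optimal monomial perturbation gives $\hvol(v_s)\asymp k^{n-2}\asymp\alpha\,\vol(L)$), but establishing that in general is precisely the restricted-volume/optimization step you defer, and it is the hard part; the weakening "$\alpha^n\le\alpha$ since $\alpha<1$" does not rescue an intermediate inequality that is itself false.

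The paper proves the first inequality by a different and much shorter mechanism, with the correct (first) power built in: take $D_V\sim_\bR-(K_V+\Delta_V)$ and let $D$ be the closure in $X$ of its pullback, so that $\wt_\xi(D)=A_{X,\Delta}(\wt_\xi)$; the uniform Izumi-type inequality of \cite[Lemma 3.4]{Z-mld^K-1} then gives $\lct_x(X,\Delta;D)\ge c_0(n)\,\Theta(X,\Delta;\xi)$, and the Seifert-bundle comparison used in Lemma \ref{lem:compare alpha at vertex and infinity} gives $\lct(V,\Delta_V;D_V)\ge\lct_x(X,\Delta;D)$; taking the infimum over $D_V$ yields $\alpha(V,\Delta_V)\ge c_0\Theta$ with $c=c_0^{-1}$. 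If you want to keep your valuation-theoretic approach, you would need to redo the volume computation for $v_s$ carefully and aim only for the linear bound in $A_{V,\Delta_V}(v_0)/v_0(D_V)$, which amounts to reproving an Izumi-type estimate rather than quoting it.
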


\begin{proof}
This essentially follows from \cite[Lemma 3.12 and Remark 3.13]{Z-mld^K-2}. We provide a (slightly different) proof for the reader's convenience. Let $(\oX,\oDelta+V_{\infty})$ be the associated projective orbifold cone as before. Let $D_V\sim_\bR -(K_V+\Delta_V)$ be an effective $\bR$-divisor and let $D$ be the closure in $X$ of its pullback to $X\setminus\{x\}$. Then $\wt_\xi (D) = A_{X,\Delta}(\wt_\xi)$. The uniform Izumi inequality in \cite[Lemma 3.4]{Z-mld^K-1} thus implies that 
\[\lct_x (X,\Delta;D)\ge c_0\cdot \Theta(X,\Delta;\xi)\] 
for some constant $c_0=c_0(n)>0$. But we also have $\lct(V,\Delta_V;D_V)\ge \lct_x(X,\Delta;D)$ as in the proof of Lemma \ref{lem:compare alpha at vertex and infinity}. As $D_V$ is arbitrary, this gives the first inequality with $c=c_0^{-1}$.

By Lemma \ref{lem:compare alpha at vertex and infinity} and the following Lemma \ref{lem:alpha-vol inequality}, we have
\begin{eqnarray*}
\hvol(x,X,\Delta) &\ge & \alpha_x(\oX,\oDelta+V_{\infty})^n\cdot \vol(-(K_{\oX}+\oDelta+V_{\infty})) \\
                  &  =& \min\{1,\alpha(V,\Delta_V)\}^n \cdot \vol(-(K_{\oX}+\oDelta+V_{\infty})) \, .\\               
\end{eqnarray*}
On the other hand, we have $\hvol_{X,\Delta}(\wt_\xi)=\vol(-(K_{\oX}+\oDelta+V_{\infty}))$ by Lemma \ref{lem:Sasaki vol as global vol}. This gives the second inequality.
\end{proof}

We have used the following statement, which is well-known to experts.

\begin{lem} \label{lem:alpha-vol inequality}
Let $(X,\Delta)$ be a pair of dimension $n$ that is klt at a closed point $x$, and let $L$ be a big $\bR$-Cartier $\bR$-divisor on $X$. Then we have
\[
\hvol(x,X,\Delta)\ge \alpha_x(X,\Delta;L)^n \cdot \vol(L).
\]
\end{lem}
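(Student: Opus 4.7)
The plan is to fix an arbitrary valuation $v\in \Val^*_{X,x}$, prove the single-valuation inequality $A_{X,\Delta}(v)^n\vol(v)\ge \alpha_x(X,\Delta;L)^n\vol(L)$, and then take the infimum over $v$. Writing $\alpha:=\alpha_x(X,\Delta;L)$, the first step is to convert the lower bound on $\alpha$ into a uniform upper bound on $v(D)$ valid for every effective $\bR$-divisor $D\sim_\bR L$. Since the log canonical threshold is computed by valuations (\cite{JM-val-ideal-seq,BdFFU-log-discrepancy}), for every such $D$ one has $\alpha \le \lct_x(X,\Delta;D)\le A_{X,\Delta}(v)/v(D)$, which yields $v(D)\le A_{X,\Delta}(v)/\alpha$. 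By rescaling, for every integer $m\ge 1$ and every effective $D\sim_\bR mL$ one gets $v(D)\le mT$ with $T:=A_{X,\Delta}(v)/\alpha$.

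The heart of the argument is an injection, carried out first under the simplifying assumption that $L$ is $\bQ$-Cartier. Picking $m$ divisible enough that $mL$ is Cartier at $x$ and fixing a local trivialization, I send each $s\in H^0(X,mL)$ to its restriction $\bar s\in \cO_{X,x}$; the previous step forces $v(\bar s)=v(\mathrm{div}(s))\le mT$ and hence $\bar s\notin \fa_{\lfloor mT\rfloor+1}(v)$. Applying the same estimate to $s_1-s_2$ shows the resulting map $H^0(X,mL)\to \cO_{X,x}/\fa_{\lfloor mT\rfloor+1}(v)$ is injective, so
\[
\dim H^0(X,mL)\le \ell\bigl(\cO_{X,x}/\fa_{\lfloor mT\rfloor+1}(v)\bigr).
\]
Dividing by $m^n/n!$ and letting $m\to\infty$, the left-hand side tends to $\vol(L)$ and the right-hand side to $T^n\vol(v)$ by the definitions of Section~\ref{ss:local vol}. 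Rearranging gives $\alpha^n\vol(L)\le A_{X,\Delta}(v)^n\vol(v)$, after which the infimum over $v$ concludes the proof.

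The main obstacle will be handling the case when $L$ is genuinely $\bR$-Cartier but not $\bQ$-Cartier, since the injection above literally requires a local Cartier trivialization of $mL$ at $x$. I would deal with this by approximating $L$ in the big cone by $\bQ$-Cartier big divisors $L_k\to L$ and passing to the limit, exploiting continuity of $\vol(\cdot)$ on the big cone together with stability of the bound $v(D)\le A_{X,\Delta}(v)/\alpha_x(L_k)$ under limits of $D$ (which gives the lower semicontinuity of $\alpha_x$ needed here). An alternative would be to work with the graded linear series $\bigoplus_m H^0(X,\lfloor mL'\rfloor)$ for some integral effective $L'\sim_\bR L$ and invoke Fujita approximation to identify its volume with $\vol(L)$. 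Neither route requires new ideas, but some care is needed to verify that no estimate degrades in the limit.
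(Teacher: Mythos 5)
Your proposal is correct and, at its core, is the same dimension-count argument as the paper's: the paper phrases it contrapositively (if $\vol(L)$ exceeded $\hvol(x,X,\Delta)/\alpha_x(L)^n$, counting sections against colengths of valuation ideals would produce an effective $D\sim_\bR L$ with large value under the normalized-volume minimizer, contradicting the definition of $\alpha_x$), while you run the count directly, injecting $H^0(X,mL)$ into $\cO_{X,x}/\fa_{\lfloor mT\rfloor+1}(v)$ for an arbitrary $v\in\Val^*_{X,x}$ and then taking the infimum — which has the mild advantage of not invoking the existence of a $\hvol$-minimizer. The one point to tighten is your $\bR$-Cartier reduction: the ``lower semicontinuity of $\alpha_x$'' you invoke is not obvious as stated (an almost-optimal $D_k\sim_\bR L_k$ cannot simply be transported into the class of $L$), and your fallback of an integral effective $L'\sim_\bR L$ need not exist for an $\bR$-divisor class. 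The clean fix is to choose the $\bQ$-Cartier approximants so that $L-L_k$ is ample, e.g. $L_k=\sum_i t^{(k)}_i D_i-\delta_k A$ with $L=\sum_i c_iD_i$, $D_i$ Cartier, $A$ ample, $t^{(k)}_i\to c_i$ rational and $\delta_k\to 0$: then any effective $D\sim_\bR L_k$ plus an effective representative of $L-L_k$ is $\sim_\bR L$, so $\alpha_x(L_k)\ge\alpha_x(L)$ by monotonicity, and continuity of $\vol$ on the big cone finishes the limit. The paper's own (sketched) proof silently ignores the $\bQ$-versus-$\bR$ issue, so this is a refinement of the same route rather than a divergence from it.
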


\begin{proof}
Let $t=\alpha(X,\Delta;L)$. Suppose that $x\in X$ is a smooth point and $\vol(L)>\frac{n^n}{t^n}$. Then it is well-known, by a simple dimension count, that there exists some effective $\bQ$-divisor $D\sim_\bQ L$ such that $\mult_x D>\frac{n}{t}$; in particular, 
\[
\alpha_x(X,\Delta;L)\le \lct_x(\oX,\oDelta;D)<t\,,
\]a contradiction. We can apply the same dimension counting argument at a singular point $x\in X$, as long as we replace $\mult_x$ by the minimizing valuation of the normalized volume function, and $n^n$ by the local volume $\hvol(x,X,\Delta)$. 
\end{proof}

\subsection{Bounded family} \label{ss:bdd defn}

In this subsection we define various notions of boundedness.

\begin{defn}
We call $(\cX,\cD)\to B$ a family of pairs if $\cX$ is flat over $B$, the fibers $\cX_b$ are connected, normal and not contained in $\Supp(\cD)$.

We call $B\subseteq (\cX,\cD)\to B$ an $\bR$-Gorenstein family of klt singularities (over a normal but possibly disconnected base $B$) if
\begin{enumerate}
    \item $(\cX,\cD)\to B$ is a family of pairs, and $B\subseteq \cX$ is a section,
    \item $K_{\cX/B}+\cD$ is $\bR$-Cartier and $b\in (\cX_b,\cD_b)$ is a klt singularity for all $b\in B$.
\end{enumerate}
\end{defn}

\begin{defn}
We say that a set $\cC$ of sub-pairs is bounded if there exists a family $(\cX,\cD)\to B$ of pairs over a finite type base $B$ such that for any $(X,D)\in \cC$, there exists a closed point $b\in B$ and an isomorphism $(X,\Supp(D))\cong (\cX_b,\Supp(\cD_b))$. 
\end{defn}

\begin{defn}[{\cite[Definition 2.16]{Z-mld^K-2}}]
We say that a set $\cS$ of polarized log Fano cone singularities is bounded if there exists finitely many $\bR$-Gorenstein families $B_i\subseteq (\cX_i,\cD_i)\to B_i$ of klt singularities over finite type bases, each with a fiberwise good $\bT_i$-action for some nontrivial algebraic torus $\bT_i$, such that every $x\in (X,\Delta;\xi)$ in $\cS$ is isomorphic to $b\in (\cX_{i,b},\cD_{i,b};\xi_b)$ for some $i$, some $b\in B_i$ and some $\xi_b\in N(\bT_i)_\bR$.
\end{defn}

A priori, it may happen that a set of log Fano cone singularities is bounded as a set of sub-pairs, but becomes unbounded when we take into account the log Fano cone structure. Nonetheless, the two boundedness notions coincide if the volume ratios are bounded away from zero and the coefficients belong to a fixed finite set.

\begin{lem} \label{lem:two defn of bdd agree}
Let $\theta>0$ and let $I\subseteq [0,1]$ be a finite set. Let $\cS$ be a set of polarized log Fano cone singularities $x\in (X,\Delta;\xi)$ with coefficients in 
$I$ and $\Theta(X,\Delta;\xi)\ge \theta$. Assume that the underlying set of pairs is bounded. Then $\cS$ is bounded.
\end{lem}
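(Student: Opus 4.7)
The plan is to enhance the bounded family of underlying pairs to a finite type family carrying fiberwise good torus actions, with the Reeb vectors then captured freely as points in the cocharacter space of the fiberwise torus. Since the set of pairs $(X,\Delta)$ underlying $\cS$ is bounded, I start with a finite type family $(\cX,\cD)\to B$ realizing every such pair as a fiber at the level of supports. Because $I$ is finite, stratifying $B$ into finitely many locally closed subschemes (indexed by the tuple of coefficients on each component of $\cD_b$) lets us assume that $\cD_b=\Delta$ holds on the nose. After a further stratification, I can also assume that the vertex is given by a section $\sigma\colon B\to \cX$, since the vertex is the unique torus fixed point in $X$ and so varies constructibly in families.

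The next step is to reduce to bounding the set $\cS'=\{(X,\Delta,\bT)\}$ obtained from $\cS$ by forgetting the Reeb vector, because the definition of boundedness for polarized log Fano cones allows $\xi_b\in N(\bT_i)_\bR$ to be arbitrary. Since $\dim X=n$ bounds the rank $r$ of $\bT$, I partition by rank $r\in\{1,\dots,n\}$ and work one rank at a time. A good $\bT$-action of rank $r$ on $(\cX_b,\cD_b,\sigma(b))$ is equivalent to a $\bZ^r$-grading on $R_b=\cO_{\cX_b}$ with $(R_b)_0=\bk$, with $\cD_b$ homogeneous, and whose weight monoid spans a strictly convex cone. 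I build a moduli scheme $M_r\to B$ parametrizing such gradings on the fibers.

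The crucial input is that $M_r\to B$ is of finite type, and this is where the hypothesis $\Theta(X,\Delta;\xi)\ge \theta$ enters decisively: on a fixed underlying $(X,\Delta,x)$ the distorted gradings (those in which ratios of weights go to infinity) force $\hvol_{X,\Delta}(\wt_\xi)\to\infty$ for every choice of $\xi$ in the Reeb cone, while $\hvol(x,X,\Delta)$ is fixed (cf.\ the toy computation in Example \ref{exam-weightedprojective}), so the volume ratio bound caps the amount of distortion. Combined with Lemma \ref{lem-compare theta and alpha}, which controls $\alpha$-invariants of orbifold bases, and with the boundedness of the underlying pair (which bounds the complexity of $R_b$), this forces the set of gradings giving $\Theta\ge\theta$ to lie in finitely many $GL_r(\bZ)$-orbits, uniformly in $b\in B$, making $M_r$ of finite type.

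Once $M_r\to B$ is finite type, the pullback family carries a universal subtorus of the relative automorphism group scheme; applying a finite \'etale cover of $M_r$ trivializes this torus to a constant $\bG_m^r$, producing a finite type family with a fiberwise good $\bG_m^r$-action. Taking the disjoint union over $r\in\{1,\dots,n\}$ of the resulting families yields the finite collection of families $(\cX_i,\cD_i)\to B_i$ required by the definition, with the Reeb vector $\xi\in N(\bT)_\bR$ accommodated by the free choice of $\xi_b$. The main obstacle is the uniform finite type statement in the third paragraph: proving that the volume ratio bound forces only finitely many grading classes, uniformly across the bounded base, is the technical heart of the argument and requires the quantitative control on normalized volumes via the weights of the action.
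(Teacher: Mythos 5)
Your opening reductions (stratifying the bounded family so that the coefficients are realized on the nose and the vertex is given by a section) match the paper's, but the argument has a genuine gap at exactly the point you yourself flag as ``the main obstacle'': the claim that $\Theta(X,\Delta;\xi)\ge\theta$ forces the gradings on the fibers into finitely many classes uniformly over $B$, so that your parameter space $M_r\to B$ is of finite type. You do not prove this, and the surrounding construction is also not available for free: the automorphism group scheme of an affine klt singularity is not of finite type, so ``the universal subtorus of the relative automorphism group scheme'' and the moduli scheme $M_r$ of gradings do not exist without substantial extra work (e.g.\ uniformly bounding degrees of generators of the coordinate rings and the weights of the action on them), and the heuristic that distorted gradings force $\hvol_{X,\Delta}(\wt_\xi)\to\infty$ is only verified in the toric toy case of Example \ref{exam-weightedprojective}, not for a general fiber. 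In the paper this entire step is not reproved but imported: after the stratification one invokes \cite[Lemma 2.15 and Theorem 3.1]{Z-mld^K-2}, which is precisely the statement that over an $\bR$-Gorenstein family of klt singularities the polarized cone structures with volume ratio bounded below by $\theta$ form a bounded set. So either you cite that result (making your $M_r$ construction unnecessary) or you must actually supply the uniform finiteness argument, which is the real content of the lemma and is missing from your proposal.

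A second, smaller omission: the definition of boundedness for polarized log Fano cone singularities used here requires finitely many \emph{$\bR$-Gorenstein families of klt singularities} with fiberwise good torus actions. After assigning coefficients you therefore still need to stratify $B$ so that $K_{\cX/B}+\cD$ is $\bR$-Cartier and the fibers through the section are klt; the paper does this via \cite[Lemma 4.44]{Kol-moduli-book} together with inversion of adjunction, and this reduction is also a prerequisite for applying the cited boundedness theorem for cone structures. Your proposal never addresses the $\bR$-Cartier condition on $K_{\cX/B}+\cD$ or the klt condition on the fibers.
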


\begin{proof}
Let $f\colon (\cX,\cD)\to B$ be a family of pairs over a finite type base such that for any $x\in (X,\Delta;\xi)$ in $\cS$, we have an isomorphism $(X,\Supp(\Delta))\cong (\cX_b,\Supp(\cD_b))$ for some $b\in B$. After base change along $\cX\to B$ and possibly stratifying $B$, we may assume that $f$ admits a section $\sigma\colon B\to \cX$ so that the above isomorphism induces an isomorphism
\[
\left(x\in (X,\Supp(\Delta))\right) \cong \left(\sigma(b)\in (\cX_b,\Supp(\cD_b))\right).
\]
Since the coefficients belong to the finite set $I$, we may also assign coefficients to $\cD$ and assume that $(X,\Delta)\cong (\cX_b,\cD_b)$. After these reductions, by \cite[Lemma 4.44]{Kol-moduli-book} (or rather its proof) and inversion of adjunction, we know that there exists a finite collection of locally closed subset $B_i$ of $B$ such that the family $(\cX,\cD)$ becomes $\bR$-Gorenstein after base change to $\sqcup_i B_i$ and enumerates exactly all the klt fibers of $B\subseteq (\cX,\cD)\to B$ ({\it cf.} the last part of the proof of \cite[Theorem 3.1]{Z-mld^K-2}). Thus by replacing $B$ with $\sqcup_i B_i$, we may assume that $B\subseteq (\cX,\cD)\to B$ is an $\bR$-Gorenstein family of klt singularities to begin with. Since $\Theta(X,\Delta;\xi)\ge \theta$ by assumption, together with \cite[Lemma 2.15 and Theorem 3.1]{Z-mld^K-2}, we then see that the set $\cS$ is bounded as a set of log Fano cone singularities. 
\end{proof}

We also recall the definition of log birational boundedness. For an $\bR$-divisor $G$, we denote its positive part by $G^+$ and negative part by $G^-$, i.e. $G=G^{+}-G^{-}$ where $G^{+}$, $G^{-}$ are effective without common components.

\begin{defn} \label{defn:birational dominate pairs}
Let $(X,G)$ and $(Y,\Sigma)$ be projective sub-pairs. We say that $(Y,\Sigma)$ log birationally dominates $(X,G)$ if there exist a birational map $\varphi\colon Y\dashrightarrow X$ such that $\Supp(\Sigma)$ contains the birational transform of $\Supp(G)$ and the exceptional divisors of $\varphi$, i.e. ${\rm Supp}(\Sigma)\supseteq{\rm Supp}(\varphi^{-1}_* G)+{\rm Ex}(\varphi)$. We say that $(Y,\Sigma)$ log birationally dominates $(X,G)$ effectively if in addition the $\varphi^{-1}$-exceptional divisors are contained in $\Supp(G^-)$. 

We will say $(Y,\Sigma)$ log birationally dominates $(X,G)$ (effectively) through $\varphi$ if we want to specify the birational map $\varphi\colon X\dashrightarrow Y$.
\end{defn}

Note that if $(Y,\Sigma)$ log birationally dominates $(X,G)$ with $G$ being $\bR$-Cartier, and $G'$ is the birational pullback of $G$, then $\Supp(G')\subseteq \Supp(\Sigma)$.

\begin{defn} \label{defn:birational dominate sets}
Let $\cC$ be a set of projective sub-pairs and let $\cP$ be a set of projective pairs. We say that $\cP$ log birationally dominates $\cC$ (resp. log birationally dominates $\cC$ effectively) if any $(X,G)\in \cC$ is log birationally (resp. log birationally and effectively) dominated by some $(Y,\Sigma)\in\cP$. 

We say that $\cC$ is log birationally bounded if there exists a bounded set $\cP$ of pairs that log birationally dominates $\cC$ ({\it cf.} \cite[Definition 2.4.1]{HMX-BirAut}).
\end{defn}

The following criterion for log birational boundedness is a special case of \cite[Lemma 3.2]{HMX-BirAut} or \cite[Proposition 4.4]{Birkar-bab-1}.

\begin{prop} \label{prop:Birkar's log bir bdd prop}
Let $n$ be a positive integer and let $c_0,c_1>0$. Let $\cC$ be the set of pairs $(X,\Delta+\Gamma)$ of dimension $n$ such that
\begin{itemize}
    \item $-(K_X+\Delta)$ is ample,
    \item the non-zero coefficients of $\Delta$ are at least $c_0$,
    \item $\Gamma$ is a $\bQ$-Cartier, effective, nef Weil divisor,
    \item $|\Gamma|$ defines a birational map and $\vol(\Gamma)\le c_1$.
\end{itemize} 
Then $\cC$ is log birationally bounded. More precisely, there exists a bounded set $\cP$ of projective log smooth pairs $(Y,\Sigma)$ depending only on $n,c_0,c_1$ such that the following are satisfied: For any $(X,\Delta+\Gamma)\in \cC$, there exist some log smooth pair $(Y,\Sigma)\in \cP$ and a birational map $\varphi\colon Y\dashrightarrow X$ such that:
\begin{enumerate}
    \item $(Y,\Sigma)$ log birationally dominates $(X,\Delta+\Gamma)$ through $\varphi$.
    \item There exists some effective and big Cartier divisor $A\le \Sigma$ on $Y$ such that $|A|$ is base point free and $|\Gamma-(\varphi^{-1})^*A|\neq \emptyset$.
\end{enumerate} 
\end{prop}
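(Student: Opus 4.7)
The proposition is essentially a direct specialization of the log birational boundedness techniques developed in \cite{HMX-BirAut} and \cite{Birkar-bab-1}. The strategy is to use the birational linear system $|\Gamma|$ to map $X$ onto a subvariety of bounded degree in a projective space of bounded dimension, and then extract $(Y,\Sigma)$ from a simultaneous log resolution of the resulting universal family.

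The concrete steps are as follows. First, I would pass to a log resolution $\mu\colon W\to X$ of $(X,\Delta+\Gamma)$ that also resolves the base locus of $|\Gamma|$, and write $\mu^*\Gamma = M + F$ with $M$ base-point free and $F$ the fixed part. Since $|\Gamma|$ is birational, so is the induced morphism $f\colon W\to Z\subseteq \bP^N$, and $M = f^*H$ for a hyperplane $H$; the degree of $Z$ equals $M^n\le \vol(\mu^*\Gamma) = \vol(\Gamma)\le c_1$. Next, I would bound $N = h^0(\lfloor\Gamma\rfloor)-1$ uniformly in $n$ and $c_1$ using the fact that $-(K_X+\Delta)$ is ample and $\Gamma$ is nef with bounded volume (via asymptotic Riemann--Roch / effective non-vanishing estimates). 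With $N$ and $\deg Z$ both bounded, the $Z$'s lie in a bounded family by the Hilbert scheme. Taking a simultaneous log resolution of the universal family of such pairs $(Z, H|_Z)$, then further blowing up so that the birational transform of $\Supp(\Delta+\Gamma)$ and the exceptional divisors of $\varphi\colon Y\dashrightarrow X$ are all contained in a simple normal crossings divisor $\Sigma$, yields the bounded family of log smooth pairs $(Y,\Sigma)$, establishing (1). The coefficient bound $c_0$ ensures only finitely many configurations of coefficients can appear, so a finite stratification is enough. For (2), I would take $A\leq\Sigma$ to be the pullback to $Y$ of a hyperplane section of $Z$, which is Cartier, big, and base point free; on the common resolution $W$ we have $\mu^*\Gamma - f^*H = F\geq 0$, and pushing down gives $\Gamma - (\varphi^{-1})^*A = F_X \geq 0$, so $|\Gamma-(\varphi^{-1})^*A|\neq\emptyset$.

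The main obstacle is the uniform upper bound on $N = h^0(\lfloor\Gamma\rfloor)-1$ from the data $\vol(\Gamma)\leq c_1$, $\Gamma$ nef, and $-(K_X+\Delta)$ ample with nonzero coefficients in $[c_0,1]$; this is exactly the technical content of \cite[Lemma 2.4.2]{HMX-BirAut} and \cite[Proposition 4.4]{Birkar-bab-1}, which we invoke. Once this bound is in place, the remaining steps — Hilbert scheme parameter space, simultaneous log resolution, and checking (1) and (2) — are routine.
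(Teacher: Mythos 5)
Your proposal is correct and takes essentially the same route as the paper: the paper's proof simply invokes \cite[Proposition 4.4(1)]{Birkar-bab-1} for log birational boundedness and property (1), and \cite[Proposition 4.4(3)]{Birkar-bab-1} for property (2), observing that $A$ is the birational transform of the movable part of $|\Gamma|$ --- exactly the hyperplane pullback you construct. Your additional sketch of the mechanism behind that proposition (bounded-degree image of the birational map given by $|\Gamma|$, Hilbert/Chow parameter space, simultaneous log resolution) is the standard argument underlying the cited result rather than a genuinely different approach.
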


\begin{proof}
Log birational boundedness follows from \cite[Proposition 4.4(1)]{Birkar-bab-1}, which also gives the property (1). Property (2) follows from \cite[Proposition 4.4(3)]{Birkar-bab-1} (or from the construction of the bounded set $\cP$ in {\it loc. cit.}, as $A$ is simply the birational transform of the movable part of $|\Gamma|$).
\end{proof}

\section{Boundedness}

In this section, we give the proof of our main theorems. The main statement is Theorem \ref{thm:polarized cone bdd}, and we divide its proof into three parts, as outlined in \ref{par:strategy}.

\subsection{Log birational boundedness} \label{ss:log bir bdd}

To prove Theorem \ref{thm:polarized cone bdd}, we first aim to show that the log Fano cone singularities have log birationally bounded projective orbifold cone compactifications. From Section \ref{ss:orbifold}, we have seen that the local alpha invariants of the projective orbifold cones are bounded from below away from the divisor at infinity, and their volumes are also bounded. The situation is thus somewhat similar to those of \cite{Jia-Kss-Fano-bdd}. Our first step is to refine some of the arguments in \cite{Jia-Kss-Fano-bdd} to prove an effective birationality result. Log birationally boundedness is then an immediate consequence.

In the global (Fano) setting, \cite{Jia-Kss-Fano-bdd} proceeds as follow. In order to show that $|-mK_X|$ defines a birational map for some fixed integer $m$, one aims to create isolated non-klt centers on the Fano variety $X$. The main observation from \cite{Jia-Kss-Fano-bdd} is that if both the alpha invariant and the volume are bounded from below, then the volumes of any covering family of subvarieties are also bounded from below, and this allows one to cut down the dimension of the non-klt centers. The next two lemmas show that this strategy still work if we replace the global alpha invariant by the local one.

\begin{lem}[{\it cf.} {\cite[Lemma 3.1]{Jia-Kss-Fano-bdd}}] \label{lem:vol bound of subvar}
Let $X$ be a normal projective variety of dimension $n$ and $L$ a big $\bR$-Cartier $\bR$-divisor on $X$. Let $f \colon Y \to T$ be a projective morphism and $\mu\colon Y \to X$ a surjective morphism. Assume that a general fiber $F$ of $f$ is of dimension $k$ and is mapped birationally onto its image $G$ in $X$. Then for any general smooth point $x\in X$, we have
\[
\vol(L|_G)\ge \frac{\alpha_x(L)^{n-k}}{\binom{n}{k}(n-k)^{n-k}} \vol(L).
\]
\end{lem}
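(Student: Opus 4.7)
Set $V=\vol(L)$, $V_G=\vol(L|_G)$, and $\alpha=\alpha_x(L)$. The plan is to carry out the standard alpha-versus-volume argument in the direction transverse to $G$: for $m\gg 0$ I will produce a section $s_m\in H^0(X,\lfloor mL\rfloor)$ whose divisor vanishes to high order along $G$ (with vanishing order $r_m$ growing linearly in $m$), and then play the resulting large multiplicity at $x$ against the definition of $\alpha_x(L)$. After shrinking $T$ to a Zariski open I may assume the general fiber $F$ of $f$ is smooth and $\mu|_F$ is an isomorphism onto its image $G$; for a general smooth $x\in X$ I then choose $G$ through $x$ so that $G$ is smooth at $x$ of codimension $n-k$ in $X$.

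\emph{Dimension count.} Asymptotic Riemann--Roch on $X$ gives $h^0(X,\lfloor mL\rfloor)\sim \tfrac{m^n}{n!}V$. From the short exact sequences
\[
0\to \cI_G^{j+1}\otimes\cO_X(\lfloor mL\rfloor)\to \cI_G^{j}\otimes\cO_X(\lfloor mL\rfloor)\to \mathrm{Sym}^j N^*_{G/X}\otimes \cO_G(\lfloor mL\rfloor|_G)\to 0
\]
(valid on the smooth locus of the embedding $G\hookrightarrow X$), the codimension of $H^0(X,\cI_G^r\cdot\lfloor mL\rfloor)$ in $H^0(X,\lfloor mL\rfloor)$ is at most $\sum_{j=0}^{r-1}h^0(G,\mathrm{Sym}^j N^*_{G/X}\otimes\cO_G(\lfloor mL\rfloor|_G))$, which by asymptotic Riemann--Roch on the $k$-dimensional $G$ equals $\binom{r+n-k-1}{n-k}\cdot\tfrac{m^k V_G}{k!}(1+o(1))$. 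Hence a nonzero $s_m\in H^0(X,\cI_G^{r_m}\cdot\lfloor mL\rfloor)$ exists for any integer sequence $r_m$ with $\limsup r_m/m<\bigl(V/(V_G\binom{n}{k})\bigr)^{1/(n-k)}$ and all $m\gg 0$.

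\emph{Alpha bound.} Let $D_m=(s_m=0)$. The local equation of $D_m$ at $x$ lies in $\cI_G^{r_m}$, so blowing up $G$ near $x$ and using that both $X$ and $G$ are smooth at $x$ yields $\lct_x(X;D_m)\leq \lct_x(X;\cI_G^{r_m})=(n-k)/r_m$. On the other hand, $\tfrac{1}{m}D_m$ is $\bR$-linearly equivalent to $L$ up to an $o(1)$-small effective correction, so by definition of $\alpha_x(L)$,
\[
m\cdot\lct_x(X;D_m)=\lct_x(X;\tfrac{1}{m}D_m)\geq \alpha-o(1).
\]
Combining, $r_m/m\leq (n-k)/\alpha+o(1)$. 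Letting $m\to\infty$ and comparing with the lower bound on $r_m/m$ from the dimension count gives $\bigl(V/(V_G\binom{n}{k})\bigr)^{1/(n-k)}\leq (n-k)/\alpha$, which rearranges to the desired inequality $V_G\geq \alpha^{n-k}V/\bigl(\binom{n}{k}(n-k)^{n-k}\bigr)$.

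\emph{Main obstacle.} The hard part will be to make the asymptotic dimension estimate uniform in $j$ as $r$ grows linearly with $m$: the Riemann--Roch error of order $m^{k-1}$ on each $\mathrm{Sym}^j N^*_{G/X}\otimes mL|_G$, summed over $j=0,\dots,r_m-1$, must be controlled so that it stays subleading to $\tfrac{m^n}{n!}V$. For ample $\bQ$-Cartier $L$ this follows from uniform Serre/Fujita vanishing applied to the sequence of bundles $\mathrm{Sym}^j N^*_{G/X}$; for a general big $\bR$-Cartier $L$ I would perturb $L$ by a small ample $\bQ$-divisor on a log resolution of $X$, carry out the argument for the perturbation, and pass to the limit using the continuity of $\vol$ and of $\alpha_x$. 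The same approximation absorbs the minor point that $\tfrac{1}{m}\lfloor mL\rfloor$ is only approximately $\bR$-linearly equivalent to $L$.
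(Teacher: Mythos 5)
There is a genuine gap, and it sits exactly at the step your write-up treats as routine: the estimate
\[
\sum_{j=0}^{r-1}h^0\bigl(G,\mathrm{Sym}^j N^*_{G/X}\otimes\cO_G(\lfloor mL\rfloor|_G)\bigr)\;=\;\binom{r+n-k-1}{n-k}\cdot\frac{m^k\vol(L|_G)}{k!}\,(1+o(1)),\qquad r=r_m\sim lm .
\]
This amounts to claiming $h^0(G,\mathrm{Sym}^jN^*_{G/X}\otimes mL|_G)\le \operatorname{rank}(\mathrm{Sym}^jN^*_{G/X})\cdot\frac{m^k}{k!}\vol(L|_G)(1+o(1))$ uniformly for $j$ up to $\sim lm$. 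Asymptotic Riemann--Roch does not give this: since $j$ grows linearly with $m$, the twist by $\mathrm{Sym}^jN^*_{G/X}$ contributes at the leading order $m^n$ after summation, and whether it helps or hurts depends on the positivity of the conormal bundle. If $N^*_{G/X}$ has positive directions the bound simply fails; indeed your count never uses that $G$ moves in a covering family (the family enters only to place $G$ through a general $x$), so as written the argument would apply to a rigid subvariety through $x$, where no such volume inequality can hold. The covering hypothesis is what morally forces $N_{G/X}$ to be generically nef, but that requires an argument you do not give, and your proposed fix (uniform Serre/Fujita vanishing for the bundles $\mathrm{Sym}^jN^*_{G/X}$) is not available uniformly in $j\sim lm$ and in any case addresses the $O(m^{k-1})$ error, not the leading-order problem. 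A secondary issue: $G$ is in general singular away from $x$, so $\cI_G^{j}/\cI_G^{j+1}$ agrees with $\mathrm{Sym}^jN^*_{G/X}$ only on the lci locus, and the excess there also has to be controlled.

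This is precisely the difficulty the paper's proof (following Jiang) avoids by doing the count upstairs: after replacing $T$ by a general complete intersection so that $\mu\colon Y\to X$ is generically finite, the general fiber $F$ has trivial normal bundle in $Y$, and one gets the clean bound $h^0(Y,\mu^*L^{\otimes m}\otimes\cO_Y/\cI_F^{lm})\le h^0(F,\mu^*L^{\otimes m})\cdot h^0(\cO_T/\fm_t^{lm})+O(m^{n-1})$, with no conormal-positivity issue; the high multiplicity along $F$ is then transferred to multiplicity $\ge l$ along $G$ using that $\mu$ is \'etale at the generic point of $F$, and the rest of your lct computation goes through as in the paper. To repair your approach you would either have to prove generic nefness of the normal sheaf of a general member of a covering family and deduce the uniform $h^0$ bound from it, or simply perform the count on $Y$ as above.
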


\begin{proof}
This follows from \cite[Lemma 3.1]{Jia-Kss-Fano-bdd} with some small modifications. We sketch the argument for the reader's convenience. By perturbing the coefficients and rescaling, we may assume that $L$ is Cartier. Replacing $f$ by its Stein factorization, we may assume that $F$ is connected. We also assume that $Y$ and $T$ are smooth by taking log resolution. Moreover, by the Bertini Theorem we may replace $T$ by a general complete intersection subvariety and assume that $\mu$ is generically finite. In particular, it is \'etale at the generic point of $F$ (since $F$ is a general fiber). We may also choose $F$ so that $x\in G$. Clearly it suffices to consider the case when $k<n$.

Let $t=f(F)\in T$ and $l\in \bQ_+$. By a direct calculation (using that $F$ has trivial normal bundle in $Y$), we have 
\[
h^0(Y,\mu^*L^{\otimes m}\otimes \cO_Y/\cI_F^{lm}) \le h^0(F,\mu^*L^{\otimes m})\cdot h^0(\cO_T/\fm_t^{lm}) + O(m^{n-1})
\]
for sufficiently large and divisible integers $m$. Hence if
\begin{equation} \label{eq:compare leading term}
    \frac{\vol(L)}{n!}> \frac{\vol(L|_G) \cdot l^{n-k}}{k!\cdot (n-k)!},
\end{equation}
then $h^0(X,mL)>h^0(Y,\mu^*L^{\otimes m}\otimes \cO_Y/\cI_F^{lm})$ for $m\gg 0$. It follows that there exists some effective divisor $D\sim_\bQ L$ such that $\mult_F (\mu^*D)\ge l$; as $\mu$ is \'etale at the generic point of $F$, this also implies that $\mult_G D\ge l$ and therefore 
\[
\alpha_x(L)\le \lct_x(D)\le \frac{n-k}{l}
\]
as $G$ has codimension $n-k$ in $X$. This holds for every $l$ that satisfies \eqref{eq:compare leading term}; the lemma then follows.
\end{proof}

\begin{lem}[{\it cf.} {\cite[Theorem 1.5]{Jia-Kss-Fano-bdd}}] \label{lem-Jiang's lemma}
Let $\varepsilon,\alpha>0$. Let $X$ be a normal projective variety of dimension $n$, and let $L$ be an ample $\bQ$-Cartier $\bQ$-divisor on $X$ such that $(L^n)\ge \varepsilon$ and $\alpha_x(L)\ge \alpha$ for a general point $x\in X$. Then there exists some positive integer $m_0=m_0(n,\varepsilon,\alpha)$ such that $|K_X+\rup{mL}|$ defines a birational map for all $m\ge m_0$.
\end{lem}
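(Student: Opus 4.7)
The plan is to follow the strategy of \cite[Theorem 1.5]{Jia-Kss-Fano-bdd} essentially verbatim, replacing the use of the global $\alpha$-invariant there by the local $\alpha$-invariant at a general point. The substitution is made possible precisely by Lemma \ref{lem:vol bound of subvar}, which is the local version of \cite[Lemma 3.1]{Jia-Kss-Fano-bdd}.

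First I would reduce to a point separation statement: to show that $|K_X+\rup{mL}|$ defines a birational map for $m \geq m_0 = m_0(n,\varepsilon,\alpha)$, it suffices to show that two very general smooth points $x_1, x_2 \in X$ can be separated by sections of $K_X+\rup{mL}$. By Nadel vanishing, this reduces to producing an effective $\bQ$-divisor $D \sim_\bQ \lambda L$ with $\lambda < m$ for some $\lambda = \lambda(n,\varepsilon,\alpha)$ such that $(X,D)$ has $x_1$ as an isolated component of $\Nklt(X,D)$ while $x_2 \notin \Nklt(X,D)$. Indeed, the vanishing
\[
H^1\bigl(X,\, \cJ(X,D) \otimes \cO_X(K_X + \rup{mL})\bigr) = 0
\]
produces a section of $K_X+\rup{mL}$ that is non-zero at $x_1$ and vanishes at $x_2$; swapping the roles of $x_1,x_2$ yields separability in both directions, and birationality follows.

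Next I would construct $D$ by the standard Angehrn--Siu/Koll\'ar cut-down of non-klt centers. Inductively one builds a nested chain
\[
W_n = X \supsetneq W_{n-1} \supsetneq \cdots \supsetneq W_0 = \{x_1\}
\]
of non-klt centers through $x_1$ with $\dim W_k = k$. To pass from $W_k$ to $W_{k-1}$, one needs an effective $\bQ$-divisor $D_k \sim_\bQ \lambda_k L$ of sufficiently high multiplicity at $x_1$ along $W_k$; a standard dimension count on $W_k$ guarantees its existence as soon as
\[
\lambda_k^{k}\, \vol(L|_{W_k}) > k^{k},
\]
so the key input is a uniform lower bound on $\vol(L|_{W_k})$. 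By the generality of $x_1$, the center $W_k$ arises (after tie-breaking) as a general member of a covering family of subvarieties, and Lemma \ref{lem:vol bound of subvar} applied at the general smooth point $x_1$ gives
\[
\vol(L|_{W_k}) \geq \frac{\alpha_{x_1}(L)^{n-k}}{\binom{n}{k}(n-k)^{n-k}} \cdot \vol(L) \geq \frac{\alpha^{n-k}\,\varepsilon}{\binom{n}{k}(n-k)^{n-k}}.
\]
Hence each $\lambda_k$ is bounded in terms of $n,\varepsilon,\alpha$, and $\lambda := \sum_{k=0}^{n-1}\lambda_k$ gives the required bound. Ensuring $x_2 \notin \Nklt(X,D)$ is a routine tie-breaking step using the generality of $x_2$.

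The main technical point lies in the cut-down step itself: from a non-klt center $W_k$ one must produce an effective divisor on $W_k$ with prescribed multiplicity at $x_1$ and then lift it to a divisor on $X$ in a way that strictly cuts down the dimension of the non-klt locus at $x_1$. This is handled by Kawamata's subadjunction together with the standard multiplier-ideal restriction theorem, as in Koll\'ar's treatment of the Angehrn--Siu method. Modulo the volume bound provided by Lemma \ref{lem:vol bound of subvar}, this step is routine, and the argument in the proof of \cite[Theorem 1.5]{Jia-Kss-Fano-bdd} transfers without change; the crucial observation enabling the transfer is that in that proof the $\alpha$-invariant is invoked only at the very general points $x_1,x_2$, so our local hypothesis $\alpha_x(L) \geq \alpha$ at a general point $x$ is exactly what is required.
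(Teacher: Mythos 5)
Your proposal is correct and follows essentially the same route as the paper: the key point in both is that Lemma \ref{lem:vol bound of subvar}, applied at a general point, gives a uniform lower bound on $\vol(L|_G)$ for general members of covering families, which feeds the standard Angehrn--Siu/cut-down creation of isolated non-klt centers and Nadel-vanishing point separation. The paper simply compresses the steps you write out by citing \cite[2.31(2)]{Birkar-bab-1} (to conclude $mL$ is potentially birational) and \cite[Lemma 2.3.4]{HMX-BirAut} (to pass from potential birationality to birationality of $|K_X+\rup{mL}|$).
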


\begin{proof}
The assumptions and Lemma \ref{lem:vol bound of subvar} imply that there exists some $m_0=m_0(n,\varepsilon,\alpha)>0$ such that $\vol(mL|_G)>(2k)^k$ (where $k=\dim G$) for any general member $G$ of a covering family of positive dimensional subvarieties of $X$ and all $m\ge m_0$. The argument in \cite[2.31(2)]{Birkar-bab-1} implies that $mL$ is potentially birational (\cite[Definition 3.5.3]{HMX-ACC}), and then the lemma follows from \cite[Lemma 2.3.4]{HMX-BirAut}.
\end{proof}

We can now prove the effective birationality of the orbifold cone compactifications. 

\begin{prop} \label{prop:effective birationality}
Fix some positive integer $n$, a finite coefficient set $I\subseteq [0,1]\cap \bQ$, and some positive real numbers $\varepsilon,\theta>0$.
Then there exist some positive integer $m=m(n,\varepsilon,\theta,I)$ such that $m\cdot I\subseteq \bN$ and for any quasi-regular polarized log Fano cone singularity $x\in (X,\Delta;\xi)$ with
\begin{equation} \label{eq:assumptions on log Fano cone}
    \dim X = n,\mbox{\ \ }\Coef(\Delta)\subseteq I,\mbox{\ \ }\hvol(X,\Delta;\xi)\ge \varepsilon \mbox{\ \ and \ \ }\Theta(X,\Delta;\xi)\ge \theta,
\end{equation}
the following statements hold for its orbifold cone compactification $(\oX,\oDelta+V_\infty)$:
\begin{enumerate}
    \item The pair $(\oX,\oDelta+V_\infty)$ has an $m$-complement.
    \item The linear system $|-m(K_{\oX}+\oDelta+V_\infty)|$ defines a birational map.
\end{enumerate}
\end{prop}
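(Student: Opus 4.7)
Set $L:=-(K_{\oX}+\oDelta+V_\infty)$, which is ample and $\bQ$-Cartier. The orbifold cone compactification $(\oX,\oDelta+V_\infty)$ is a projective plt pair of dimension $n$ whose coefficients lie in the finite set $I\cup\{1\}$, and $\oX$ is of Fano type since $-(K_{\oX}+\oDelta+V_\infty)$ is ample. Part (1) is then a direct application of Birkar's boundedness of complements \cite{Birkar-bab-1}: it supplies an integer $m_1=m_1(n,I)\in\bN$, which we may enlarge to ensure $m_1\cdot I\subseteq\bN$, together with an $m_1$-complement $(\oX,B^+)$, i.e.\ an effective $\bQ$-divisor $B^+\ge\oDelta+V_\infty$ with $(\oX,B^+)$ log canonical and $m_1(K_{\oX}+B^+)\sim 0$.

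For part (2), the first step is to apply Lemma \ref{lem-Jiang's lemma} to $\oX$ and $L$. Lemma \ref{lem:Sasaki vol as global vol} together with $\Theta(X,\Delta;\xi)\le 1$ and $\hvol(x,X,\Delta)\ge\varepsilon$ yields $\vol(L)=\hvol_{X,\Delta}(\wt_\xi)\ge\varepsilon$. Lemmas \ref{lem:compare alpha at vertex and infinity} and \ref{lem-compare theta and alpha} give
\[
\alpha_{x_1}(\oX,\oDelta+V_\infty;L)\ge \min\{1,\alpha(V,\Delta_V)\}\ge \min\{1,\theta^{1/n}\}
\]
for every closed point $x_1\in X\subseteq\oX$, and $X$ is open dense in $\oX$. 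Since $\alpha_{x_1}(\oX;L)\ge \alpha_{x_1}(\oX,\oDelta+V_\infty;L)$, the same bound holds for $\alpha_{x_1}(\oX;L)$ at a general point. Hence Lemma \ref{lem-Jiang's lemma} furnishes $m_0'=m_0'(n,\varepsilon,\theta)$ such that $|K_{\oX}+\lceil m'L\rceil|$ defines a birational map for all $m'\ge m_0'$.

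To upgrade to a birationality statement for $|mL|$ itself, select $m=m(n,\varepsilon,\theta,I)$ together with an auxiliary $m'$ in the range $m_0'\le m'\le m$, subject to: $m\cdot I\subseteq\bN$, $m'\cdot I\subseteq\bN$ (so that $\lceil m'L\rceil=m'L$ as Weil divisors), and $m-m'+1$ is a positive multiple of $m_1$. Writing $E:=B^+-(\oDelta+V_\infty)\ge 0$, the complement relation scales to $-(m-m'+1)K_{\oX}\sim (m-m'+1)B^+$, whence
\[
mL-(K_{\oX}+m'L)=(m-m')L-K_{\oX}\sim (m-m'+1)B^+-(m-m')(\oDelta+V_\infty)=(\oDelta+V_\infty)+(m-m'+1)E,
\]
which is effective. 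Fixing an effective representative of this class and multiplying sections of $|K_{\oX}+m'L|$ by a defining section realizes the former as a sublinear system of $|mL|$ inducing the same rational map, so $|mL|$ defines a birational map.

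The principal technical point is bridging the gap between Jiang's effective birationality output $|K_{\oX}+\lceil m'L\rceil|$ and the target $|mL|$, which differ by $-K_{\oX}$; the canonical class is not a priori effective on $\oX$. This is precisely where part (1) is invoked: the $m_1$-complement produces an effective divisor in the class of $-(m-m'+1)K_{\oX}-(m-m')(\oDelta+V_\infty)$, absorbing the extra canonical contribution and embedding the birational system into $|mL|$. Arranging the simultaneous divisibility conditions on $m,m',m_1$ and the denominators of $I$ is elementary once the overall scheme of the proof is set.
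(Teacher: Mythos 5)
Your reduction to Jiang-type effective birationality (volume lower bound via Lemma \ref{lem:Sasaki vol as global vol}, local alpha bound away from $V_\infty$ via Lemmas \ref{lem:compare alpha at vertex and infinity} and \ref{lem-compare theta and alpha}, then Lemma \ref{lem-Jiang's lemma}) is exactly the paper's route, but one small slip first: the bound $\min\{1,\alpha(V,\Delta_V)\}\ge\min\{1,\theta^{1/n}\}$ does not follow from $\Theta\ge\theta$ together with $\Theta\ge\min\{1,\alpha(V,\Delta_V)\}^n$ --- that is the wrong half of Lemma \ref{lem-compare theta and alpha}. The correct half is $c\cdot\alpha(V,\Delta_V)\ge\Theta\ge\theta$, giving $\alpha(V,\Delta_V)\ge\theta/c(n)$, which is all you need; harmless, but it should be stated this way.

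The genuine gap is in your bridge from $|K_{\oX}+m'L|$ to $|mL|$. Your three requirements $m\cdot I\subseteq\bN$, $m'\cdot I\subseteq\bN$ and $m_1\mid(m-m'+1)$ are in general simultaneously unsatisfiable: if $d$ is the least integer with $dI\subseteq\bN$, the first two force $d\mid(m-m')$, so $m-m'+1\equiv 1\pmod d$, and then $m_1\mid(m-m'+1)$ is possible only if $\gcd(d,m_1)=1$. You have no control over this, since $m_1$ is whatever Birkar's theorem provides, and typically (in Birkar's formulation the complement integer is divisible by the denominators of the coefficient set) $d\mid m_1$, so for $d>1$ there is no admissible pair $(m,m')$ at all; moreover, with your congruence $m\equiv m'-1\pmod{m_1}$ the final $m$ is not a multiple of $m_1$, so part (1) would not hold for that same $m$ either. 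So the step you dismiss as ``elementary'' is precisely where the argument breaks. It is repairable (e.g.\ take $m'\equiv 1\pmod d$ rather than $d\mid m'$ and carry the round-up $\rup{m'L}$ through the computation), but the paper's device is simpler and avoids the complement altogether at this point: since $-K_{\oX}=L+\oDelta+V_\infty$, applying Lemma \ref{lem-Jiang's lemma} with exponent $m+1$ for any $m\ge m_0'$ with $mI\subseteq\bN$ gives $K_{\oX}+\rup{(m+1)L}=mL-V_\infty-\lfloor\oDelta\rfloor$, which differs from $mL$ by the effective integral divisor $V_\infty+\lfloor\oDelta\rfloor$; hence $|mL|$ already defines a birational map, and one then takes a common multiple with Birkar's $m_1$ (using that an $m_1$-complement is a $km_1$-complement and that birationality passes to multiples of $m$ by adding a fixed member of $|(k-1)mL|$) so that (1) and (2) hold for a single $m$ with $mI\subseteq\bN$.
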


Here we define an $m$-complement of a pair $(X,D)$ as an effective $\bQ$-divisor $\Gamma$ such that $(X,D+\Gamma)$ is log canonical and $m(K_X+D+\Gamma)\sim 0$.

\begin{proof}
Item (1) is the boundedness of complements proved in \cite[Theorem 1.7]{Birkar-bab-1}. Let us prove (2). Let $L=-(K_{\oX}+\oDelta+V_\infty)$. By Lemma \ref{lem:Sasaki vol as global vol} and our assumption on the local volume, we have $\vol(L)\ge \varepsilon$. By Lemmas \ref{lem:compare alpha at vertex and infinity} and \ref{lem-compare theta and alpha}, there exists some positive number $\alpha=\alpha(n,\theta)>0$ such that $\alpha_{x_1}(L)=\alpha_{x_1}(\oX,\oDelta+V_\infty)\ge \alpha$ for all $x_1\in X$. Thus Lemma \ref{lem-Jiang's lemma} guarantees the existence of some positive integer $m=m(n,\varepsilon,\theta,I)$ such that $m\Delta$ has integer coefficients and 
\[
|K_{\oX}+\rup{(m+1)L}| = |mL-V_{\infty}|
\]
defines a birational map. It follows that $|mL|$ also defines a birational map. By taking common multiples, we get a positive integer $m=m(n,\varepsilon,\theta,I)>0$ such that (1) and (2) simultaneously hold. 
\end{proof}

From Lemmas \ref{lem:Sasaki vol as global vol}, we know that 
\[
\vol(M)\le m^n\vol(-(K_{\oX}+\oDelta+V_\infty))\le (mn)^n \theta^{-1}.
\]
Thus by Proposition \ref{prop:Birkar's log bir bdd prop}, this immediately implies that the set of orbifold cone compactifications of quasi-regular log Fano cone singularities satisfying \eqref{eq:assumptions on log Fano cone} is log birationally bounded. Choose some (log bounded) birational model $\varphi\colon(Y,\Sigma)\dashrightarrow \oX$ of the projective orbifold cone $\oX$. Our next task is to reconstruct $X$ from $Y$.

\subsection{Boundedness in codimension one}
\label{ss:bdd in codim 1}

To reconstruct $X$, we need to first understand the exceptional divisors of the birational map $\varphi^{-1}\colon \oX\dashrightarrow Y$. Note that the infinity divisor $V_\infty$ is typically $\varphi^{-1}$-exceptional, and since it depends on the choice of the Reeb vector $\xi$, we will not have much control over it. The next result shows that other than $V_\infty$, the remaining $\varphi^{-1}$-exceptional divisors are essentially ``bounded''. To state it precisely let us make one more definition.

\begin{defn}
Let $(X,\Delta)$ be a pair and let $N$ be a positive integer. A sub-klt $N$-complement of $(X,\Delta)$ is a (not necessarily effective) $\bQ$-divisor $G$ on $X$ such that $N(K_X+\Delta+G)\sim 0$ and $(X,\Delta+G)$ is sub-klt.
\end{defn}

\begin{prop} \label{prop:bdd in codim 1}
Fix some positive integer $n$, a finite coefficient set $I\subseteq [0,1]\cap \bQ$, and some positive real numbers $\varepsilon,\theta>0$. There exist a bounded set $\cP$ of projective log smooth pairs $(Y,\Sigma)$ and a positive integer $N=N(n,\varepsilon,\theta,I)$, such that the following holds for any quasi-regular polarized log Fano cone singularity $x\in (X,\Delta;\xi)$ satisfying \eqref{eq:assumptions on log Fano cone}: 
\begin{enumerate}
    \item $(\oX,\oDelta)$ has a sub-klt $N$-complement $G$ such that $\Supp(G^{-})\subseteq V_\infty\subseteq \Supp(G)$.
    \item There exists some $(Y,\Sigma)\in \cP$ that log birationally dominates $(\oX,\oDelta+G)$ effectively $($Definition \ref{defn:birational dominate pairs}$)$.
\end{enumerate}
\end{prop}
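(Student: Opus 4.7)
The plan is to construct the sub-klt $N$-complement $G$ of part~(1) and the bounded family $\cP$ realizing part~(2) in tandem, bootstrapping from Proposition~\ref{prop:effective birationality} and Proposition~\ref{prop:Birkar's log bir bdd prop}. I expect the construction of $G$ to be the main obstacle.

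Let $L := -(K_{\oX} + \oDelta + V_\infty)$, and let $m = m(n,\varepsilon,\theta,I)$ be the uniform integer from Proposition~\ref{prop:effective birationality}, providing an $m$-complement $\Gamma_0$ of $(\oX, \oDelta + V_\infty)$. Since this pair is plt with $V_\infty$ of coefficient one, $V_\infty \not\subseteq \Supp(\Gamma_0)$. Applying Proposition~\ref{prop:Birkar's log bir bdd prop} to $(\oX, \oDelta + V_\infty + \Gamma')$ with a general $\Gamma' \in |mL|$ produces a bounded family $\cP_0$ of log smooth pairs $(Y_0,\Sigma_0)$ with birational maps $\varphi_0\colon Y_0 \dashrightarrow \oX$, together with a big, base-point-free Cartier divisor $A \le \Sigma_0$ on $Y_0$ satisfying $|mL - (\varphi_0^{-1})^*A|\neq\emptyset$.

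To construct $G$ satisfying~(1), I would set
\[
G := -\delta V_\infty + \Gamma_0 + \tfrac{1}{k}H,
\]
where $\delta$ is a fixed small positive rational number with bounded denominator, $k(1+\delta)\in\bN$, and $H \in |k(1+\delta)V_\infty|$ is a general effective divisor, with $k$ chosen uniformly (relying on the base-point-freeness of $A$ and the effective birationality from Proposition~\ref{prop:effective birationality}, so that $|k(1+\delta)V_\infty|$ has sufficiently many movable members). Using $-K_{\oX} - \oDelta \sim_\bQ V_\infty + L$ and $\Gamma_0 \sim_\bQ L$, one verifies $N(K_{\oX}+\oDelta+G)\sim 0$ for $N=\mathrm{lcm}(m,k)$, $\Supp(G^-) = V_\infty \subseteq \Supp(G)$, and sub-klt-ness of $(\oX, \oDelta + G)$: at $V_\infty$ the coefficient is $-\delta<1$; at components of $\Gamma_0$, the lc property of $(\oX, \oDelta + V_\infty + \Gamma_0)$ combined with Bertini-generality of $H$ (disjoint from $\Supp(\Gamma_0)$) yields strict inequality; at components of $H$ the small coefficient $1/k$ suffices. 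To upgrade lc to strict klt along $\Gamma_0$, I would replace $\Gamma_0$ by a generic element of $|m'L|$ for a possibly larger uniform $m'$, invoking the lower bound $\alpha_{x_1}(\oX, \oDelta+V_\infty) \ge \alpha_0$ from Lemmas~\ref{lem:compare alpha at vertex and infinity}--\ref{lem-compare theta and alpha}.

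For part~(2), enlarge $\cP_0$ to a bounded family $\cP$ by log resolutions incorporating the new components of $\Supp(G^+)$; the resulting $(Y,\Sigma) \in \cP$ has $\Sigma$ containing the strict transform of $\Supp(\oDelta + G)\cup\{V_\infty\}$ together with the $\varphi$-exceptional divisors. The claim $\Ex(\varphi^{-1}) \subseteq V_\infty$ then follows: any $\varphi^{-1}$-exceptional prime divisor $E \ne V_\infty$ on $\oX$ would correspond to a divisorial valuation of positive log discrepancy with respect to $(\oX, \oDelta+G)$, and the bounded denominator $N$ combined with ACC-type bounds from Birkar's complements would force $E$ to appear as a divisor of some $Y \in \cP$, contradicting $E \in \Ex(\varphi^{-1})$. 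The main obstacle remains the construction of $G$: ensuring sub-klt-ness with bounded denominator and the prescribed support of $G^-$ uniformly across the family is subtle, because the polarization $V_\infty$ varies with the Reeb vector $\xi$.
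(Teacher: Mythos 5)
Your construction of $G$ has a genuine gap, and it sits exactly at the point you flag as the main obstacle. Taking $H$ to be a ``general'' member of $|k(1+\delta)V_\infty|$ for a \emph{uniform} $k$ is not possible: $V_\infty$ is only $\bQ$-Cartier with unbounded Cartier index, and for bounded multiples the linear system can degenerate completely. For instance, for $\oX=\bP(1,\xi_1,\dots,\xi_n)$ with $k(1+\delta)<\xi_1\le\cdots\le\xi_n$, every member of $|k(1+\delta)V_\infty|$ is the single divisor $k(1+\delta)V_\infty$ itself, so your $G$ becomes $V_\infty+\Gamma_0$, which is not sub-klt and has no negative part at all. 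The proposed repair of klt-ness along $\Gamma_0$ via the $\alpha$-invariant bound also fails where it is needed most: Lemmas \ref{lem:compare alpha at vertex and infinity}--\ref{lem-compare theta and alpha} only bound $\alpha_{x_1}(\oX,\oDelta+V_\infty)$ for $x_1\in X$, i.e.\ \emph{away} from $V_\infty$; at points of $V_\infty$ the local $\alpha$-invariant is unbounded below (this is precisely the difficulty of the problem), and moreover the bound only gives log canonicity after scaling a member of $|m'L|$ by $\alpha_0$, which destroys the complement condition $N(K_{\oX}+\oDelta+G)\sim 0$. The paper's Lemma \ref{lem:sub-klt N-comp exists} circumvents exactly this: it pulls a general big base-point-free Cartier divisor $A$ back from the bounded birational model, writes $B_X^-+R=\lambda V_\infty+mC$ to manufacture a divisor $C$ of bounded degree with $V_\infty\not\subseteq\Supp(C)$, proves $(\oX,\oDelta+V_\infty+\alpha_0 C)$ is lc near $V_\infty$ by adjunction to the orbifold base (not by a local $\alpha$ bound at $V_\infty$), applies Birkar's boundedness of complements a second time to this pair to get $C'$, and takes the convex combination $G=t(\Gamma+B_X)+(1-t)(\alpha_0 C+C')$, which simultaneously forces $\Supp(G^-)\subseteq V_\infty$, sub-klt-ness with bounded $N$, and the ampleness of $-(K_{\oX}+\oDelta+V_\infty)-\tfrac12 G^+$.

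Two further steps in your part (2) are also incomplete. First, ``enlarge $\cP_0$ by log resolutions incorporating the new components of $\Supp(G^+)$'' presupposes that the pairs $(\oX,\oDelta+V_\infty+G^+)$ are log birationally bounded; this requires a uniform bound on $\vol(G^+)$ and a birational linear system $|NG^+|$ as in Proposition \ref{prop:Birkar's log bir bdd prop}, which is exactly why the ampleness condition on $-(K_{\oX}+\oDelta+V_\infty)-\tfrac12 G^+$ is built into the paper's complement. Second, the claim that exceptional divisors $E\ne V_\infty$ of $\varphi^{-1}$ are forced onto a bounded model by ``ACC-type bounds from Birkar's complements'' is not an argument. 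The correct mechanism (Lemma \ref{lem:sub-klt N-comp imply bdd in codim 1}) is: since $E\not\subseteq\Supp(G^-)$, its coefficient in $G$ is $\ge 0$, so its discrepancy over the crepant pullback $(Y,G_Y)$, and hence over the klt log smooth pair $(Y,(1-\tfrac1N)\Sigma)$, is $\le 0$; a klt log smooth pair admits only finitely many such divisors, and they are extracted by blowups along strata of $\Sigma$, which preserves boundedness of $\cP$. So the overall architecture of your plan (effective birationality, a bounded sub-klt complement with $\Supp(G^-)\subseteq V_\infty$, then extraction on a bounded model) matches the paper, but the central construction of $G$ as written does not work, and the extraction step needs the finiteness-of-low-discrepancy-divisors argument rather than ACC.
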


Informally, the implication (2) means that the log Fano cone singularities are bounded in codimension one. The existence of a sub-klt bounded complement will later be used to ensure that certain MMPs exist and terminate.

The proof of the proposition is inspired by the proof of \cite[Theorem 1.6]{HMX-ACC} and \cite{Birkar-bab-1}*{Proposition 7.13}. The main technical part is to construct a bounded sub-klt complement of $(\oX,\oDelta)$ satisfying certain conditions. We first discuss how the existence of such a complement affects boundedness in codimension one.

\begin{lem} \label{lem:sub-klt N-comp imply bdd in codim 1}
Let $N$ be a positive integer and let $\cC$ be a set of projective sub-klt sub-pairs $(X,G)$ satisfying $N(K_X+G)\sim 0$. Assume that $\cC$ is log birationally bounded. Then there exists a bounded set $\cP$ of projective log smooth pairs that log birationally dominates $\cC$ effectively $($Definition \ref{defn:birational dominate sets}$)$.
\end{lem}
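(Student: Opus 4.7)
The plan rests on a simple observation: \emph{if $\varphi\colon Y\to X$ is a birational morphism between normal projective varieties, then no prime divisor on $X$ is $\varphi^{-1}$-exceptional}, since such a divisor would have to lie in the image $\varphi(\Ex(\varphi))$, which has codimension $\ge 2$ in $X$. Consequently, log birational domination through an honest birational morphism is automatically effective, and the task reduces to constructing, within a bounded family, log smooth pairs $(Y,\Sigma)$ admitting a birational morphism $\varphi\colon Y\to X$ with $\Supp(\Sigma)\supseteq\Supp(\varphi^{-1}_{*}G)+\Ex(\varphi)$.

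Starting from the log birationally bounding set $\cP_0=\{(Y_0,\Sigma_0)\}$ with birational maps $\varphi_0\colon Y_0\dashrightarrow X$ supplied by the hypothesis, I would first transfer the $N$-complement to $Y_0$. Via a common log resolution, define $\Gamma_{Y_0}$ as the sub-boundary on $Y_0$ such that $K_{Y_0}+\Gamma_{Y_0}$ is the birational pullback of $K_X+G$. Preservation of log discrepancies gives that $(Y_0,\Gamma_{Y_0})$ is sub-klt with $N(K_{Y_0}+\Gamma_{Y_0})\sim 0$ and $\Supp(\Gamma_{Y_0})\subseteq\Sigma_0$; the positive part $B:=\Gamma_{Y_0}^{+}$ has coefficients in $\{0,\tfrac{1}{N},\ldots,\tfrac{N-1}{N}\}$, so $(Y_0,B)$ is a klt pair living in a bounded family derived from $\cP_0$. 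I would then build $Y$ as a blow-up $\pi\colon Y\to Y_0$ extracting precisely the $\varphi_0^{-1}$-exceptional divisors on $X$ that do not lie in $\Supp(G^{-})$. Under the birational transform, these correspond to divisors $E$ over $(Y_0,\Gamma_{Y_0})$ with center of codimension $\ge 2$ on $Y_0$, log discrepancy $A_{Y_0,\Gamma_{Y_0}}(E)\in(0,1]$, and non-exceptional image on $X$. The $N$-complement pins these log discrepancies down to the finite set $\{\tfrac{1}{N},\ldots,\tfrac{N}{N}\}$, and together with the boundedness of $(Y_0,B)$ this cuts out a bounded closed subset of $Y_0$ containing all the centers to be extracted. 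Blowing up this subset uniformly in families and setting $\Sigma:=\pi^{-1}_{*}\Sigma_0+\Ex(\pi)$ yields the bounded set $\cP$, with $\varphi:=\varphi_0\circ\pi\colon Y\to X$ a birational morphism; the opening observation then delivers effectivity.

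The main obstacle is the uniformity claim in the middle of the second paragraph: as $(X,G)$ varies over $\cC$, the centers on $Y_0$ to be extracted vary too, and one must argue that they all sit inside a bounded family. The discrete structure afforded by the sub-klt $N$-complement---log discrepancies confined to $\tfrac{1}{N}\bZ$---combined with the klt and bounded-family properties of $(Y_0,B)$, are precisely what makes this uniform control feasible. Assembling these ingredients carefully, while keeping the blow-up construction relative to the finite-type base parameterizing $\cP_0$, is the technical heart of the proof.
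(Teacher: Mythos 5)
Your core strategy is the right one and matches the paper's: pull the $N$-complement back to the bounded log smooth model $(Y_0,\Sigma_0)$, observe that any $\varphi_0^{-1}$-exceptional divisor $F$ on $X$ with $F\not\subseteq\Supp(G^-)$ has log discrepancy in $(0,1]\cap\frac{1}{N}\bZ$, and enlarge the bounded family so that these finitely many divisors are extracted. However, as written the proposal has a genuine gap, and it sits exactly where you flag it. The statement that the $N$-complement ``pins the log discrepancies to a finite set'' and that this, with boundedness of $(Y_0,B)$, ``cuts out a bounded closed subset of $Y_0$ containing all the centers to be extracted'' is not an argument: a finite list of possible log discrepancy values does not by itself bound the divisors to be extracted as $(X,G)$ ranges over $\cC$, and blowing up a closed subset containing their centers once does not in general extract them. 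The mechanism the paper uses (and which you need) is the comparison with the klt log smooth pair $(Y_0,(1-\frac{1}{N})\Sigma_0)$: since $G_{Y_0}\le(1-\frac{1}{N})\Sigma_0$, any such $F$ satisfies $a(F;Y_0,(1-\frac{1}{N})\Sigma_0)\le a(F;Y_0,G_{Y_0})=a(F;X,G)\le 0$, and for a log smooth klt pair there are only finitely many divisors of non-positive discrepancy, all obtained by successive blowups along strata of $\Sigma_0$; since the strata vary in the bounded family $\cP_0$, these iterated blowups can be performed uniformly, which is what makes the enlarged $\cP$ bounded. This is the content you must supply in place of the single ``uniform blow-up of a bounded subset.''

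Separately, your opening reduction to a birational \emph{morphism} is both unjustified and stronger than what is needed. Extracting the finitely many missing divisors does not resolve the indeterminacy of $Y\dashrightarrow X$, so the claim that $\varphi_0\circ\pi\colon Y\to X$ is a morphism does not follow from your construction; and a bounded model admitting a birational morphism onto $X$ would extract \emph{every} divisor of $X$, including those in $\Supp(G^-)$ (in the application, the divisor $V_\infty$), which is precisely what cannot be done in a bounded family and why the lemma only asks for effective domination. Fortunately this misstep is harmless to the architecture: once the missing divisors not lying in $\Supp(G^-)$ are extracted, every remaining $\varphi^{-1}$-exceptional divisor lies in $\Supp(G^-)$ by construction, which is the definition of effective log birational domination — no morphism is required.
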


\begin{proof}
By assumption, we may choose a bounded set $\cP$ of projective pairs that log birationally dominates $\cC$. After passing to a log resolution, we may assume that $\cP$ is bounded set of log smooth pairs. For any $(X,G)\in \cC$, let $(Y,\Sigma)\in \cP$ be a log smooth pair that log birationally dominates $(X,G)$ through a birational map $\varphi\colon Y\dashrightarrow X$. Write $\varphi^*(K_X+G)=K_Y+G_Y$. Then $G_Y$ is a sub-klt $N$-complement of $Y$ supported on $\Sigma$ (see the remark after Definition \ref{defn:birational dominate pairs}). In particular, $G_Y\le (1-\frac{1}{N})\Sigma$. The discrepancy of any $\varphi^{-1}$-exceptional divisor $F$ must satisfy 
\[
a(F;Y,(1-\frac{1}{N})\Sigma)\le a(F;Y,G_Y)=a(F;X,G)\le 0,
\]
unless $F\subseteq \Supp(G^-)$. Since $(Y,\Sigma)$ is log smooth, the pair $(Y,(1-\frac{1}{N})\Sigma)$ is klt, hence there are only finitely many exceptional divisors with discrepancy at most $0$, and these can be extracted via successive blowups along the strata of $\Sigma$. In other words, up to replacing the bounded set $\cP$ of log smooth pairs, we may assume that the only $\varphi^{-1}$-exceptional divisors are among the components of $G^-$, thus $\cP$ also dominates $\cC$ effectively. 
\end{proof}

We next construct the sub-klt bounded complements on the projective orbifold cones.

\begin{lem} \label{lem:sub-klt N-comp exists}
There exists a positive integer $N=N(n,\varepsilon,\theta,I)$ such that for any quasi-regular polarized log Fano cone singularity $x\in (X,\Delta;\xi)$ satisfying \eqref{eq:assumptions on log Fano cone}, there exists a sub-klt $N$-complement $G$ of $(\oX,\oDelta+V_\infty)$ such that
\begin{enumerate}
\item $\Supp(G^{-})\subseteq V_\infty\subseteq \Supp(G+V_\infty)$, and
\item $-(K_{\oX}+\oDelta+V_{\infty})-\frac{1}{2}G^{+}$ is ample.
\end{enumerate}
\end{lem}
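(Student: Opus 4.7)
My plan is to build $G$ as a small perturbation of the bounded complement from Proposition \ref{prop:effective birationality}(1), using effective birationality to supply a compensating divisor. Write $L = -(K_{\oX} + \oDelta + V_\infty)$; by \eqref{eq-cone polarization} we have $L \sim_{\bR} rV_\infty$ for some $r \in \bQ_{>0}$ (rationality follows since both sides are $\bQ$-Cartier). By Proposition \ref{prop:effective birationality}(1) there is an effective $m$-complement $\Gamma$ of $(\oX, \oDelta + V_\infty)$ with $m = m(n,\varepsilon,\theta,I)$, so $m(K_{\oX} + \oDelta + V_\infty + \Gamma) \sim 0$ and $(\oX, \oDelta + V_\infty + \Gamma)$ is log canonical. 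Since the pair is lc and $V_\infty$ already has coefficient one, we must have $V_\infty \not\subseteq \Supp(\Gamma)$. By Proposition \ref{prop:effective birationality}(2) and its proof, $|mL - V_\infty|$ is nonempty for uniform $m$, and by the Seifert bundle decomposition of $H^0(\oX, mL)$ one can arrange that $V_\infty$ is not a fixed component of $|mL|$ after a uniformly bounded enlargement of $m$; a general member $H \in |mL|$ then satisfies $V_\infty \not\subseteq \Supp(H)$.

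Given these ingredients I define
\[
G \;:=\; (1-\epsilon)\,\Gamma \;+\; \frac{\epsilon r + \delta}{mr}\, H \;-\; \delta\, V_\infty,
\]
for small positive rationals $\epsilon,\delta$ to be determined. Using $\Gamma \sim_{\bQ} L$, $H \sim mL$, and $V_\infty \sim_{\bQ} L/r$, a direct computation gives $G \sim_{\bQ} L$, and after clearing denominators one has $N(K_{\oX} + \oDelta + V_\infty + G) \sim 0$ for an integer $N$. Since $\Gamma$ and $H$ avoid $V_\infty$, one reads off $G^+ = (1-\epsilon)\Gamma + \tfrac{\epsilon r + \delta}{mr}H \ge 0$ and $G^- = \delta V_\infty$, giving condition (1). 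For the sub-klt condition, rewrite
\[
(\oX,\,\oDelta + V_\infty + G) \;=\; \bigl(\oX,\,\oDelta + (1-\delta)V_\infty + (1-\epsilon)\Gamma + \tfrac{\epsilon r + \delta}{mr}H\bigr),
\]
which is a strict perturbation of the lc pair $(\oX, \oDelta + V_\infty + \Gamma)$: each old coefficient is strictly reduced, so since $(\oX, \oDelta + V_\infty)$ was plt the intermediate pair is klt (every lc place of the lc pair acquires positive log discrepancy), and adding the small general $H$ preserves klt for $\epsilon,\delta$ small by a Bertini argument controlled by the local $\alpha$-invariant bound from Lemma \ref{lem:compare alpha at vertex and infinity}. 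Finally, a direct calculation yields $L - \tfrac{1}{2}G^+ \sim_{\bQ} \tfrac{r-\delta}{2r}\,L$, which is ample whenever $\delta < r$, giving condition (2).

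The main obstacle is the uniformity of the parameters in $n,\varepsilon,\theta,I$. This splits into two points: (i) a uniform positive lower bound on $r$, so that $\delta < r$ can be arranged with $\delta = 1/N_0$ for uniform $N_0$, and (ii) a uniform upper bound on the Cartier index of $V_\infty$, needed to clear denominators and obtain a uniform $N$. For (i), I plan to combine the upper bound $\vol(L) \le n^n\theta^{-1}$ from Lemma \ref{lem:Sasaki vol as global vol} with a uniform upper bound on $V_\infty^n$; the latter should be accessible because $V_\infty$ corresponds to a bounded-degree ample component of the log birational model from Section \ref{ss:log bir bdd}, whose volume is controlled. For (ii), the index of $V_\infty$ is controlled by the bounded index of $\Gamma$ together with the finite coefficient set $I$. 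The uniform preservation of klt under the perturbation can likewise be traced to the lower bound on the local $\alpha$-invariant established in Lemma \ref{lem:compare alpha at vertex and infinity}.
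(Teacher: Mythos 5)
Your construction hinges on the $\bQ$-linear equivalence $V_\infty\sim_\bQ \frac1r L$ from \eqref{eq-cone polarization}, and this is where the argument breaks. To get $N(K_{\oX}+\oDelta+V_\infty+G)\sim 0$ with $NG$ integral and $N$ uniform, your coefficients $\frac{\epsilon r+\delta}{mr}$ and $\delta$ (or $\delta=r/N_0$, if you rescale) force $N$ to absorb the numerator/denominator of $r$ and the index of the relation $L\sim_\bQ rV_\infty$ -- essentially the index of $V_\infty$. Your point (ii), that this index is controlled by the bounded index of $\Gamma$ and the set $I$, is not true: already for $\oX=\bP(1,\xi_1,\dots,\xi_n)$ with comparable pairwise coprime weights (which satisfies \eqref{eq:assumptions on log Fano cone} with $\Delta=0$), the index of $V_\infty$ is unbounded. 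This unboundedness of the divisor at infinity is precisely the difficulty the lemma is designed to circumvent, and the paper's proof never uses $L\sim_\bQ rV_\infty$: the negative part of $G$ there is $tB_X^-$, where $m_0B^-\in|m_0A|$ is chosen on the bounded birational model $(Y,\Sigma)$ of Proposition \ref{prop:Birkar's log bir bdd prop} with $A$ Cartier, so the coefficient of $V_\infty$ in $G$ automatically lies in $\frac1{m_0 }\bZ$ and every linear equivalence used is pulled back from an integral one on $Y$; the uniform $N$ then comes from boundedness of complements applied to the auxiliary lc pair $(\oX,\oDelta+V_\infty+\alpha_0 C)$, and $G$ is a convex combination of the two bounded complements. (Your point (i), a lower bound on $r$, is in fact available, but via $\alpha(V,\Delta_V)\ge\alpha_0$ and the alpha--volume inequality on $V$, not via an upper bound on $(V_\infty)^n$, which is equivalent to the bound you are trying to prove.)

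The sub-klt step is also not justified. The Bertini/alpha argument for adding the general $H\in|mL|$ fails for three reasons: Lemma \ref{lem:compare alpha at vertex and infinity} bounds $\alpha_{x_1}$ only for $x_1\in X$, i.e.\ away from $V_\infty$, while at points of $V_\infty$ the local alpha invariant can be of order $1/r$; $|mL|$ only defines a birational map, so it can have a base locus along which a general member is as singular as the base locus itself, and Bertini gives no control there -- in particular an lc place $E$ of $(\oX,\oDelta+V_\infty+\Gamma)$ centered in the base locus can have $\ord_E H$ much larger than $\ord_E\Gamma$ and $\ord_E V_\infty$, so no uniform choice of $\epsilon,\delta$ restores positivity of its log discrepancy; and along $V_\infty$ your boundary restricts to a divisor of class $(1+\delta/r)(-K_V-\Delta_V)$, strictly exceeding the anticanonical of the orbifold base, so neither $\alpha(V,\Delta_V)$ nor convexity of lc pairs applies via adjunction. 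The paper's proof avoids all of this by doing the Bertini step on the bounded model, where $B^+$ is a general member of the base-point-free system $|A|$, so the crepant pullback $(\oX,\oDelta+V_\infty+\Gamma+B_X)$ is sub-klt for free, and the final $G$ is sub-klt as a convex combination of this sub-klt complement with an lc one. These two gaps -- uniformity of $N$ and uniform sub-kltness -- are the heart of the lemma, so the proposal as it stands does not give a proof.
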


\begin{proof}
We follow the argument of \cite{Birkar-bab-1}*{Proposition 7.13}. Let $m=m(n,\varepsilon,\theta,I)>0$ be the integer given by Proposition \ref{prop:effective birationality}. In particular, there exists an $m$-complement $\Gamma\in \frac{1}{m}|M|$ where $M=-m(K_{\oX}+\oDelta+V_{\infty})$. By Proposition \ref{prop:Birkar's log bir bdd prop} as in the remark right after Proposition \ref{prop:effective birationality}, we find a bounded set $\cP$ of projective log smooth pairs $(Y,\Sigma)$ depending only on $n,\varepsilon,\theta,I$, such that for any $x\in (X,\Delta;\xi)$ satisfying \eqref{eq:assumptions on log Fano cone} and any $m$-complement $\Gamma$ as above, there exists some log smooth pair $(Y,\Sigma)\in \cP$ and some birational map $\varphi\colon \oX\dashrightarrow Y$ such that:
\begin{enumerate}
    \item $(Y,\Sigma)$ log birationally dominates $(\oX,\oDelta+V_\infty+\Gamma)$ through $\varphi$.
    \item There exists some effective and big Cartier divisor $A\le \Sigma$ on $Y$ such that $|A|$ is base point free and $|M-(\varphi^{-1})^*A|\neq \emptyset$. 
\end{enumerate}
Define $\Gamma_Y$ by the crepant pullback formula 
\[
K_Y+\Gamma_Y = \varphi^*(K_{\oX}+\oDelta+V_{\infty}+\Gamma)\sim_\bQ 0.
\]
Since $(Y,\Sigma)$ belongs to a bounded family, We can also choose some positive integer $m_0$ depending only on $\cP$, and some $\bQ$-divisor $B=B^+ - B^-$ in a bounded family where 
\[
B^+\in |A|\quad \mathrm{and} \quad m_0 B^-\in |m_0 A|,
\]
such that $B^+$ is in a general position (by Bertini theorem) and $\Sigma\subseteq \Supp(B^-)$ (this is possible since $A$ is big). By construction, $\Supp(\Gamma_Y)\subseteq \Sigma$ and $(Y,\Gamma_Y)$ is sub-lc. Hence the pair $(Y,\Gamma_Y+B)$ is sub-klt and $K_Y+\Gamma_Y+B\sim_\bQ 0$. Its crepant pullback to $\oX$ is $(\oX,\oDelta+V_{\infty}+\Gamma+B_X)$ where 
\[
B_X^+=(\varphi^{-1})^*B^+\mbox{\ \  and \ \ }B_X^-=(\varphi^{-1})^*B^-\,.
\]
In particular, as $B^+$ and $m_0 B^-$ are both Cartier, the coefficients of $B_X$ belongs to $\frac{1}{m_0}\bZ$.

Choose some $R\in |M-(\varphi^{-1})^*A|$. We may write 
\[
B_X^{-}+R = \lambda V_\infty+mC
\]
where $V_\infty \not\subseteq \Supp(C)$. Note that $B_X^-\sim_\bQ (\varphi^{-1})^*A$ and thus
\[
B_X^{-}+R\sim_\bQ M=-m(K_{\oX}+\oDelta+V_{\infty})\,,
\]
hence $C\sim_\bQ -\mu (K_{\oX}+\oDelta+V_{\infty})$ for some $\mu\le 1$. We also note that the coefficients of $C$ are contained in $\frac{1}{m m_0}\bZ$.

By Lemma \ref{lem-compare theta and alpha}, the orbifold base satisfies $\alpha(V,\Delta_V)\ge \alpha_0$ for some positive constant $\alpha_0=\alpha_0(n,\varepsilon,\theta,I)>0$. We may assume that $\alpha_0<1$. By adjunction, this implies that $(\oX,\oDelta+V_\infty+\alpha_0 C)$ is log canonical in a neighbourhood of $V_\infty$. By Lemma \ref{lem:compare alpha at vertex and infinity}, we also know that $(\oX,\oDelta+V_\infty+\alpha_0 C)$ is log canonical away from $V_\infty$. Hence the pair $(\oX,\oDelta+V_\infty+\alpha_0 C)$ is log canonical everywhere. By \cite{Birkar-bab-1}*{Theorem 1.7}, it has an $N$-complement $C'\ge 0$ for some positive integer $N$ that only depends on the dimension and the coefficients; tracing through the construction above, this in turn means that $N$ only depends on $n,\varepsilon,\theta$ and the finite set $I$. Now consider the linear combination
\[
G:=t(\Gamma+B_X)+(1-t)(\alpha_0 C+C').
\]
for some fixed rational number $t\in (0,1)$ such that $mt\le (1-t)\alpha_0<1$ and $\frac{m_0}{t}\not\in \bZ$. As $\mult_{V_{\infty}}B_X\in \frac{1}{m_0}\bZ$ and $V_\infty \not\subseteq \Supp(\Gamma+C+C')$, the second condition on $t$ simply guarantees that $\mult_{V_{\infty}} G\neq -1$ and hence $V_\infty \subseteq \Supp(G+V_\infty)$. Since $G$ is a convex combination of bounded complements of $(\oX,\oDelta+V_\infty)$ and $(X,V_\infty+\Gamma+B_X)$ is sub-klt, we see that $G$ is a sub-klt $N$-complement of $(\oX,\oDelta+V_\infty)$ after possibly enlarging $N$. Moreover, as $mt\le (1-t)\alpha_0$ by our choice 
of $t$ and $B_X^{-}\le mC$ away from $V_\infty$, we have $tB_X^{-}\le (1-t)\alpha_0 C$ away from $V_\infty$ and therefore $\Supp(G^-)\subseteq V_\infty$. In particular, the resulting sub-klt complement $G$ satisfies (1). 

By construction, $G^{-}\le tB_X^{-}$ and $M-B_X^-$ is pseudo-effective. Thus 
\[
-(K_{\oX}+\oDelta+V_{\infty})-G^-\sim_\bQ \left(\frac{1}{m}-t\right)M+t(M-B_X^-)+(tB_X^- -G^-)
\]
is big. But since both $K_{\oX}+\oDelta$ and $G^-$ are proportional to the ample divisor $V_\infty$, this implies the left hand side above is in fact ample. As 
\[
G=G^+-G^-\sim_\bQ -(K_{\oX}+\oDelta+V_{\infty})\,,
\]it follows that $-(K_{\oX}+\oDelta+V_{\infty})-\frac{1}{2}G^+$ is also ample, proving (2).
\end{proof}

We may now return to the proof of Proposition \ref{prop:bdd in codim 1}.

\begin{proof}[Proof of Proposition \ref{prop:bdd in codim 1}]
Let $N=N(n,\varepsilon,\theta,I)$ be the positive integer from Lemma \ref{lem:sub-klt N-comp exists}. Then for any $x\in (X,\Delta;\xi)$ satisfying \eqref{eq:assumptions on log Fano cone}, there exists a sub-klt $N$-complement $G_0$ of $(\oX,\oDelta+V_\infty)$ such that
\begin{enumerate}
\item[(a)] $\Supp(G_0^{-})\subseteq V_\infty\subseteq \Supp(G_0+V_\infty)$, and
\item[(b)] $-(K_{\oX}+\oDelta+V_{\infty})-\frac{1}{2}G_0^{+}$ is ample.
\end{enumerate}
In particular, part (1) of the proposition is satisfied by $G=G_0+V_\infty$. Possibly replacing $N$ by a larger multiple, we may assume, by Proposition \ref{prop:effective birationality}, that $|-N(K_{\oX}+\oDelta+V_{\infty})|$ defines a birational map. It follows that 
\[
|NG_0^+| = |-N(K_{\oX}+\oDelta+V_{\infty})+NG_0^-|
\]
also defines a birational map.

Condition (b) above together with Lemma \ref{lem:Sasaki vol as global vol} implies that
\[
\vol(G_0^+)\le 2^n \vol(-(K_{\oX}+\oDelta+V_{\infty}))\le (2n)^n \theta^{-1}.
\]
By Proposition \ref{prop:Birkar's log bir bdd prop}, we deduce that there exists a bounded set $\cP$ of projective log smooth pairs, such that for any quasi-regular log Fano cone singularity $x\in (X,\Delta;\xi)$ satisfying \eqref{eq:assumptions on log Fano cone}, there exists some $(Y,\Sigma)\in \cP$ that log birationally dominates $(\oX,\oDelta+V_\infty+G_0^+)$. Using condition (a), we see that $(Y,\Sigma)$ also log birationally dominates $(\oX,\oDelta+G)$. In particular, the sub-klt pair $(\oX,\oDelta+G)$ belongs to a log birationally bounded set. But then by Lemma \ref{lem:sub-klt N-comp imply bdd in codim 1}, after possibly replacing the bounded set $\cP$, we may further assume that $(Y,\Sigma)$ log birationally dominates $(\oX,\oDelta+G)$ effectively. This implies part (2) of the proposition. 
\end{proof}

\subsection{From boundedness in codimension one to boundedness}
\label{ss:MMP step}

Finally, we shall recover the log Fano cone singularity $X$ from its (modified) birational model $Y$ given by the previous subsection (it is important to note that we will not attempt to recover the projective orbifold cone $\oX$, which does not belong to a bounded family). The basic strategy is as follows. Since $X$ is affine, it suffices to recover its section ring from $Y$. Using the birational model $Y$, we will identify a big open subset of $X$ with an open subset $U$ of $Y$, and the question is to find the section ring $\Gamma(\cO_U)$. If $D$ is an effective divisor with support $Y\setminus U$, we may try to run a $D$-MMP on $Y$ and construct its ample model $(\oY,\oD)$. Then $U'=\oY\setminus \oD$ is affine since $\oD$ is ample, and $\Gamma(\cO_U)$ is simply the section ring of $U'$.

Turning to more details, we begin with some general setup. Let $X$ be an affine normal variety, let $\oX$ be a normal projective compactification, and let $V_\infty$ be the divisorial part of $\oX\setminus X$ (a typical example is the orbifold cone compactifications we consider in previous sections). Let $\varphi\colon Y\dashrightarrow \oX$ be a birational map with $Y$ proper. Let $\Sigma_0$ be the sum of $\varphi^{-1}_* V_\infty$ and the exceptional divisors of $\varphi$, and let $D$ be an effective $\bQ$-Cartier $\bQ$-divisor on $Y$ such that $\Supp(D)=\Sigma_0$. A birational contraction $g\colon Y\dashrightarrow\oY$ is called an ample model of $D$ if $\oY$ is proper, $g_*D$ is $\bQ$-Cartier ample, and $D\ge g^*g_*D$.

\begin{lem} \label{lem:recover X by ample model}
Assume that all the $\varphi^{-1}$-exceptional divisors are contained in $V_\infty$, and the ample model $g\colon Y\dashrightarrow \oY$ of $D$ exists. Then the composition $\psi=g\circ\varphi^{-1}\colon \oX\dashrightarrow \oY$ induces an isomorphism $X\cong \oY\setminus g_* \Sigma_0$.
\end{lem}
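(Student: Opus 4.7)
The plan is to identify the coordinate rings of the two normal affine varieties $X$ and $\oY \setminus g_* \Sigma_0$ as subrings of the common function field $K = K(\oX) = K(\oY)$ induced by the birational map $\psi = g \circ \varphi^{-1}$. By Hartogs' extension on a normal variety, a rational function $f \in K$ is regular on $X$ if and only if $\ord_E(f) \ge 0$ for every prime divisor $E \subset \oX$ with $E \not\subseteq V_\infty$ (these are exactly the prime divisors meeting $X$, since $(\oX \setminus X) \setminus V_\infty$ has codimension $\ge 2$); and similarly $f$ is regular on $\oY \setminus g_* \Sigma_0$ if and only if $\ord_F(f) \ge 0$ for every prime divisor $F \subset \oY$ with $F \not\subseteq g_* \Sigma_0$. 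The whole proof therefore reduces to producing a valuation-preserving bijection between these two sets of prime divisors.

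The candidate bijection is $E \mapsto \psi_* E = g_* \varphi^{-1}_* E$, with inverse $F \mapsto \varphi_* g^{-1}_* F$. The crucial input is that, since $g$ is an ample model of $D$, every $g$-exceptional prime divisor on $Y$ has positive coefficient in $D - g^* g_* D$, so it lies in $\Supp(D) = \Sigma_0$. First I would show that if $E \subset \oX$ is a prime divisor not contained in $V_\infty$, then by hypothesis it is not $\varphi^{-1}$-exceptional, hence $\varphi^{-1}_* E$ is a prime divisor on $Y$; using $\Sigma_0 = \varphi^{-1}_* V_\infty + \Ex(\varphi)$, a short case check rules out $\varphi^{-1}_* E \in \Sigma_0$ (it is not in $\varphi^{-1}_* V_\infty$ since $E \not\subseteq V_\infty$, and it is not in $\Ex(\varphi)$ since $\varphi(\varphi^{-1}_* E) = E$ is a divisor). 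So $\varphi^{-1}_* E$ is not $g$-exceptional and its image $g_* \varphi^{-1}_* E$ is a prime divisor on $\oY$ outside $g_* \Sigma_0$ (otherwise injectivity of $g_*$ on non-$g$-exceptional divisors forces $\varphi^{-1}_* E \in \Sigma_0$). For the inverse map, $g^{-1}_* F$ is a prime divisor on $Y$ (as $g^{-1}$ has no exceptional divisors, by the birational contraction property) lying outside $\Sigma_0$ (else $F = g_* g^{-1}_* F \subseteq g_* \Sigma_0$), hence not $\varphi$-exceptional. A resolution-of-indeterminacies argument for $\varphi$ combined with the hypothesis that all $\varphi^{-1}$-exceptional divisors lie in $V_\infty$ then shows $\varphi_* g^{-1}_* F \not\subseteq V_\infty$: if it equaled some component $V^i_\infty$, then $V^i_\infty$ would be either non-$\varphi^{-1}$-exceptional (forcing $g^{-1}_* F = \varphi^{-1}_* V^i_\infty \in \Sigma_0$) or $\varphi^{-1}$-exceptional (so no divisor on $Y$ can dominate it), in either case a contradiction.

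With the bijection established, valuation preservation is automatic because strict transforms along birational maps of normal varieties preserve divisorial valuations on the common function field. Hartogs' extension on the normal varieties $X$ and $\oY \setminus g_* \Sigma_0$ (the latter is affine since $g_* D$ is ample with support $g_* \Sigma_0$ on the proper variety $\oY$) then gives the equality $\Gamma(X, \cO_X) = \Gamma(\oY \setminus g_* \Sigma_0, \cO_{\oY})$ as subrings of $K$; taking $\Spec$ yields the desired isomorphism, manifestly induced by $\psi$. The hard part will be the inverse direction of the divisor analysis, where one must exclude the subtle possibility that a prime divisor on $Y$ maps to a $\varphi^{-1}$-exceptional component of $V_\infty$ on $\oX$; the standing hypothesis on $\varphi^{-1}$-exceptional divisors is used precisely at this step, via the blow-up comparison.
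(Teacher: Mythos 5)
Your overall strategy---identifying $\Gamma(\cO_X)$ and $\Gamma(\cO_{U'})$, $U'=\oY\setminus g_*\Sigma_0$, as subrings of the common function field cut out by divisorial valuations and then invoking normality/Hartogs---is viable and genuinely different from the paper's proof, which compares global sections via a common resolution $f\colon W\to Y$, $h\colon W\to\oY$, the negativity lemma ($f^*D\ge h^*g_*D$) and a properness argument. However, your argument has a genuine gap at its central step: the assertion that, since $g$ is an ample model of $D$, every $g$-exceptional prime divisor has positive coefficient in $D-g^*g_*D$ and hence lies in $\Supp(D)=\Sigma_0$. This is not a property of ample models, neither with the paper's definition (birational contraction, $g_*D$ ample, $D\ge g^*g_*D$) nor with the stricter BCHM one. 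For example, if $g\colon Y\to \oY$ is the blowup of a smooth projective variety at a point $y$ and $D=g^*A$ with $A$ effective ample and $y\notin \Supp(A)$, then $g_*D=A$ is ample and $D-g^*g_*D=0$, yet $g$ contracts a divisor that has coefficient zero in the negative part and is not contained in $\Supp(D)$; more generally an ample model may contract divisors on which the positive part is relatively trivial, and these need not appear in $D-g^*g_*D$. The statement you actually need---that $g$ contracts no prime divisor of $Y$ outside $\Sigma_0$---is essentially equivalent to the conclusion of the lemma (it is exactly the assertion that $\psi|_X$ is an isomorphism in codimension one), so deriving it from a false general principle leaves the heart of the proof unproved; without it your forward map $E\mapsto g_*\varphi_*^{-1}E$ is not even defined, and only the inclusion $\Gamma(\cO_X)\subseteq\Gamma(\cO_{U'})$ survives from your (correct) backward analysis.

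The valuation-theoretic route can be repaired, but by proving a weaker statement than the bijection: it suffices that for every prime divisor $E\subset\oX$ with $E\not\subseteq V_\infty$, the center of $\ord_E$ on $\oY$ meets $U'$. This does follow from the hypotheses: $E_Y:=\varphi^{-1}_*E$ is a prime divisor with $E_Y\not\subseteq\Sigma_0=\Supp(D)$ (your argument for this is fine), and since $0\le g^*g_*D\le D$ we get $\ord_{E_Y}(g^*g_*D)=0$; computing $g^*g_*D=f_*h^*g_*D$ on a common resolution, this says precisely that the center of $\ord_E$ on $\oY$ is not contained in $\Supp(g_*D)=g_*\Sigma_0$. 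Hence any $f$ regular on $U'$ is regular at the generic point of that center and so $\ord_E(f)\ge 0$, giving $\Gamma(\cO_{U'})\subseteq\Gamma(\cO_X)$; combined with your backward inclusion and the affineness of $X$ and of $U'$ this yields the lemma. Note that this repaired argument, like the paper's, never needs to rule out contracted divisors a priori---it only uses the inequality $D\ge g^*g_*D$ to control where the relevant centers lie.
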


\begin{proof}
Let $\psi\colon \oX\dasharrow \oY$, $U'=\oY\setminus g_* \Sigma_0$ and let $U=Y\setminus \Sigma_0$. Since $g_* D$ is ample, its complement $U'$ is affine. In order to prove the lemma, it suffices to show that $\psi$ induces an isomorphism $\Gamma(\cO_X)\cong \Gamma(\cO_{U'})$. For this we first show that the induced birational map $(\varphi^{-1})|_X\colon X\dashrightarrow U$ is an isomorphism over some big open sets of both $X$ and $U$.

To see this, note that the exceptional divisors of $\varphi$ are contained in $\Sigma_0$, while the exceptional divisors of $\varphi^{-1}$ are contained in $V_\infty$. We also have $\Supp(\varphi_* \Sigma_0)\subseteq V_\infty$ and $\Supp(\varphi^{-1}_* V_\infty)\subseteq \Sigma_0$ by construction. Thus the complement of all the exceptional locus contains a big open subset of both $X$ and $U$, and $\varphi$ is an isomorphism over this open set. In particular, we have $\Gamma(\cO_X)\cong \Gamma(\cO_U)$.

Let $f\colon W\to Y$, $h\colon W\to \oY$ be a common resolution. Since $g_*D=h_*f^*D$ is ample and $D\ge g^*g_*D$ by assumption, we have $f^*D\ge h^*g_*D$ by the negativity lemma. This implies that $\Supp(f^*D)=h^{-1}(\Supp(g_*D))$ and therefore the induced morphism 
\[
W\setminus \Supp(f^*D) \to \oY\setminus \Supp(g_* D)=U'
\]
is proper, hence they have the same global sections. Similarly the morphism 
\[
W\setminus \Supp(f^*D) \to Y\setminus \Supp(D) = U
\]
is proper as well. Thus they induce isomorphisms
\[
\Gamma(\cO_U)\cong \Gamma(\cO_{W\setminus \Supp(f^*D)}) \cong \Gamma(\cO_{U'}).
\]
Combined with the previous established isomorphism $\Gamma(\cO_X)\cong \Gamma(\cO_U)$, this proves that $\psi$ induces an isomorphism $\Gamma(\cO_X)\cong \Gamma(\cO_{U'})$ and hence $X\cong \oY\setminus g_* \Sigma_0$.
\end{proof}

Now we can put things together to prove the main theorems.

\begin{proof}[Proof of Theorem \ref{thm:polarized cone bdd}]
By \cite[Lemma 2.18]{Z-mld^K-2}, after possibly replacing the positive constants $\varepsilon,\theta$ and the finite set $I$, we may assume that $I\subseteq [0,1]\cap \bQ$. By \cite[Lemma 2.11]{Z-mld^K-2}, after perturbing the Reeb vector $\xi$ and decreasing $\varepsilon,\theta$, we may further assume that the all the polarized log Fano cone singularities in $\cS$ are quasi-regular.

By Proposition \ref{prop:bdd in codim 1}, there exist a bounded set $\cP$ of projective log smooth pairs $(Y,\Sigma)$ and a positive integer $N$, depending only on $n,\varepsilon,\theta$ and $I$, such that the following holds for the projective orbifold cone compactification $(\oX,\oDelta+V_\infty)$ of any log Fano cone singularity $x\in (X,\Delta;\xi)$ in $\cS$: 
\begin{enumerate}
    \item $(\oX,\oDelta)$ has a sub-klt $N$-complement $G$ such that $\Supp(G^{-})\subseteq V_\infty\subseteq \Supp(G)$.
    \item There exists some $(Y,\Sigma)\in \cP$ that log birationally dominates $(\oX,\oDelta+G)$ effectively.
\end{enumerate}

Let $\Sigma_0\subseteq Y$ be the sum of the birational transform of $V_\infty$ and the exceptional divisors of the birational map $\varphi\colon Y\dashrightarrow \oX$. Note that $\Sigma_0\subseteq \Sigma$. In order to apply Lemma \ref{lem:recover X by ample model}, let us show that there exists some effective divisor $D$ with $\Supp(D)=\Sigma_0$ such that the ample model of $D$ exists. Once this is achieved, the remaining step is to run a $D$-MMP in the bounded family $\cP$ for some uniform choice of $D$.

Let $(Y,G_Y)$ be the crepant pull back of $(X,\oDelta+G)$. Then $G_Y$ is a sub-klt $N$-complement of $Y$ which satisfies $\Supp(G_Y)\subseteq \Sigma$ and $\Supp(G_Y^{-})\subseteq \Sigma_0$. In particular, the coefficients of $G_Y$ are at most $1-\frac{1}{N}$. 
 
Consider a new boundary divisor $\Gamma$ on $Y$ as follows: if $F$ is a prime divisor on $Y$ but is not a component of $\Sigma_0$, then we set 
\[
\mult_F (\Gamma):=\mult_F(G_Y)\ge 0;
\]
if $F$ is an irreducible component of $\Sigma_0$, then set 
\[
\mult_F (\Gamma):=\max\left\{0,\mult_F(G_Y)+\frac{1}{2N}\right\}.
\]
Let $D:=\Gamma-G_Y$. By construction, both $\bQ$-divisors $D$ and $\Gamma$ are effective, $\Supp(\Gamma)\subseteq \Sigma$, $\Supp(D)=\Sigma_0$, the coefficients of $\Gamma$ are contained in 
\[
\Lambda:=\left\{0,\frac{1}{2N}, \frac{2}{2N},\dots,1-\frac{1}{2N}\right\}
\]
(in particular, as $(Y,\Sigma)$ is log smooth, the pair $(Y,\Gamma)$ is klt), and we have 
\[
K_Y+\Gamma\sim_\bQ (K_Y+\Gamma)-(K_Y+G_Y)\sim_\bQ D.
\]
As $\Supp(\varphi^*V_\infty)\subseteq  \Sigma_0$ and $V_\infty$ is ample on $\oX$, we know that $\Sigma_0$ is big and the same holds for $D$ as $\Supp(D)=\Sigma_0$. In particular, $K_Y+\Gamma$ is big. Thus by \cite[Theorem 1.2]{BCHM}, the ample model $g\colon Y\dashrightarrow \oY$ of $K_Y+\Gamma\sim_\bQ D$ exists. By Lemma \ref{lem:recover X by ample model}, the composition $\psi\colon \oX\dashrightarrow \oY$ induces an isomorphism $X\cong \oY\setminus g_*\Sigma_0$. Since $\Sigma$ contains the birational transform of $\oDelta$ in its support, we also see that $\Supp(\Delta)=(g_*\Sigma_1)|_X$ for some reduced divisor $\Sigma_1\le \Sigma$.

To summarize, we have proved the following. For any log Fano cone singularity $x\in (X,\Delta;\xi)$ in $\cS$, there exist a log smooth pair $(Y,\Sigma)$ from the bounded set $\cP$, two reduced divisors $\Sigma_0,\Sigma_1\le \Sigma$, and an effective divisor $\Gamma$ supported on $\Sigma$ with coefficients in $\Lambda$, such that $K_Y+\Gamma$ is big and 
\[
(X,\Supp(\Delta))\cong (\oY\setminus g_*\Sigma_0, (g_*\Sigma_1)|_{\oY\setminus g_*\Sigma_0}),
\]
where $g\colon Y\dashrightarrow \oY$ is the ample model of $K_Y+\Gamma$.

Since $\cP$ is bounded, all the pairs $(Y,\Sigma)$ in $\cP$ arise as the fibers of some bounded family $(\cY, \Sigma_{\cY})\to B$ of pairs over a finite type base $B$, i.e. there exists $b\in B$ such that 
\[
(Y,\Sigma)\cong (\cY_b, \Sigma_{\cY_b}):=(\cY, \Sigma_{\cY})\times_B b .
\]
After stratifying $B$ and performing a base change, we may assume that $B$ is smooth, $(\cY, \Sigma_{\cY})\to B$ is log smooth, and every irreducible component of $\Sigma$ is the restriction of some component of $\Sigma_\cY$. In particular, there are $\bQ$-divisors $\Sigma_{0,\cY},\Sigma_{1,\cY}$ and $\Gamma_{\cY}$ supported on $\Sigma_{\cY}$ that restricts to $\Sigma_0, \Sigma_1$ and $\Gamma$ on the fiber $(Y,\Sigma)$.

To conclude the proof of the boundedness, we note that by \cite[Theorem 1.8]{HMX-BirAut}, the volume of $K_{\cY_b}+\Gamma_{\cY_b}$ is locally constant in $b\in B$; moreover, over the components of $B$ where $K_{\cY_b}+\Gamma_{\cY_b}$ is big, the relative ample model $h\colon \cY\dashrightarrow \overline{\cY}$ of $K_{\cY}+\Gamma_{\cY}$ over $B$ exists, whose restriction over $b$ yields the ample model $g_b\colon \cY_b\dashrightarrow \overline{\cY}_b$ of $K_{\cY_b}+\Gamma_{\cY_b}$. By Noetherian induction, after possibly stratifying $B$ again, we may assume that the restriction of $h_* \Sigma_{0,\cY}$ (resp. $h_* \Sigma_{1,\cY}$) to the fiber  $\overline{\cY}_b$ is exactly $(g_b)_*\Sigma_{0,\cY_b}$ (resp. $(g_b)_*\Sigma_{1,\cY_b}$). Set 
\[
\cX:=\overline{\cY}\setminus h_* \Sigma_{0,\cY} \quad \mathrm{and}\quad \Delta_{\cX}:=h_*\Sigma_{1,\cY}|_{\cX}.
\]
There are only finitely many choices of $\Sigma_{0,\cY}, \Sigma_{1,\cY}$ and $\Gamma_{\cY}$, as their coefficients belong to the finite set $\Lambda\cup \{1\}$. This leads to finitely many families $(\cX,\Delta_{\cX})\to B$ as above. By the previous discussion, for any log Fano cone singularity $x\in (X,\Delta;\xi)$ in $\cS$, the pair $(X,\Supp(\Delta))$ appears as a fiber of one of the families $(\cX,\Delta_{\cX})\to B$; therefore, the set of pairs underlying $\cS$ is bounded. By Lemma \ref{lem:two defn of bdd agree}, this implies that $\cS$ is also a bounded set of log Fano cone singularities.
\end{proof}

\begin{proof}[Proof of Corollary \ref{c-bounded cone}]
 It suffices to prove $x\in (X,\Delta;\xi)$ satisfies the condition of Theorem \ref{thm:polarized cone bdd}, where $x$ is the vertex of the cone, and $\xi$ corresponds to the $\bG_m$-action given by the cone structure. 

By Lemma \ref{lem-compare theta and alpha}, $\Theta(X,\Delta;\xi)\ge (\min\{\alpha_0,1\})^n$. One can directly calculate
\[
\hvol_{X,\Delta}(\wt_{\xi})=r(-K_V-\Delta_V)^{n-1}\ge \varepsilon 
\]
(see e.g. \cite[Lemma 3.4]{Z-mld^K-2}), which implies that $\hvol(x,X,\Delta)\ge \varepsilon\cdot  (\min\{\alpha_0,1\})^n$. Thus the set $\cS$ is bounded by Theorem \ref{thm:polarized cone bdd}.
\end{proof}

\begin{proof}[Proof of Theorem \ref{thm-Fanocone bounded}]
This directly follows from Theorem \ref{thm:polarized cone bdd} when $\Theta(X,\Delta;\xi)=1$.
\end{proof}

\begin{proof}[Proof of Theorem \ref{thm-discrete volume}]
By Theorem \ref{thm:SDC}, every klt singularity $x\in (X,\Delta)$ has a special degeneration to a K-semistable log Fano cone singularity $x_0\in (X_0,\Delta_0;\xi_v)$ with $\hvol(x,X,\Delta)=\hvol(x_0,X_0,\Delta_0)$. Moreover, the coefficients of $\Delta_0$ belong to the finite set
\[
I^+:=\{\sum_{i}m_ia_i \mid m_i\in\bN, a_i\in I\} \cap [0,1].
\]
Therefore, Theorem \ref{thm-discrete volume} follows from Theorem \ref{thm-Fanocone bounded} and the constructibility of the local volume function in bounded families \cite[Theorem 1.3]{Xu-quasi-monomial} (see also \cite[Theorem 3.5]{HLQ-vol-ACC} for the real coefficient case).
\end{proof}

\begin{rem}
It is important to understand more about the set $\widehat{\rm Vol}_{n,I}$.
When $I=\{0\}$, it is proved in \cite{LX-cubic-3fold} that the maximal number in $\widehat{\rm Vol}_{n}:=\widehat{\rm Vol}_{n,\{0\}}$ is $n^n$, the local volume of a smooth point. It is conjectured that the second largest number in $\widehat{\rm Vol}_{n}$ is $2(n-1)^n$ (the local volume of an ordinary double point), but for now this is known only when $n\le 3$.
\end{rem}

\section{Minimal log discrepancy of Koll\'ar components}

It is observed in \cite{Z-mld^K-1,Z-mld^K-2} that the boundedness of log Fano cone singularities is closely related to the boundedness of $\mldk$, the minimal log discrepancies of Koll\'ar component. The goal of this section is to use the boundedness result from the previous section to prove some upper bounds of $\mldk$ that only depend on the local volume.

\begin{defn}[\cite{Xu-pi_1-finite}]
Let $x\in (X,\Delta)$ be a klt singularity and let $E$ be a prime divisor over $X$. If there exists a proper birational morphism $\pi\colon Y\to X$ such that $E= \pi^{-1}(x)$ is the unique exceptional divisor, $(Y,E+\Delta_Y)$ is plt and  $-(K_Y+\Delta_Y+E)$ is $\pi$-ample, we call $E$ a \emph{Koll\'ar component} over $x\in (X,\Delta)$ and $\pi\colon Y\to X$ the plt blowup of $E$.
\end{defn}

\begin{defn}[\cite{Z-mld^K-1}] \label{defn:mld^K}
Let $x\in (X,\Delta)$ be a klt singularity. The minimal log discrepancy of Koll\'ar components, denoted $\mldk (x,X,\Delta)$, is the infimum of the log discrepancies $A_{X,\Delta}(E)$ as $E$ varies among all Koll\'ar components over $x\in (X,\Delta)$.
\end{defn}

\begin{thm}[{\it cf.} {\cite[Conjecture 1.7]{Z-mld^K-1}}] \label{thm:mld^K bdd}
Let $n\in\bN$, $\varepsilon>0$ and let $I\subseteq [0,1]$ be a finite set. Then there exists some constant $A>0$ depending only on $n,\varepsilon,I$ such that 
\[
\mldk(x,X,\Delta)\le A
\]
for any $n$-dimensional klt singularity $x\in (X,\Delta)$ with ${\rm Coeff}(\Delta)\subseteq I$ and $\hvol(x,X,\Delta)\ge \varepsilon$.
\end{thm}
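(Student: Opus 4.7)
The plan is to reduce to Theorem \ref{thm:polarized cone bdd} via a rational perturbation of the Reeb vector of the K-semistable degeneration, then transfer the resulting log discrepancy bound back to $(X, \Delta)$ using an approximation by Kollár components.

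First, by Theorem \ref{thm:SDC}, the singularity $x \in (X, \Delta)$ admits a K-semistable log Fano cone degeneration $x_0 \in (X_0, \Delta_0; \xi_v)$ with equal local volume $\hvol(x_0, X_0, \Delta_0) = \hvol(x, X, \Delta) \ge \varepsilon$, and with coefficients of $\Delta_0$ in the finite set $I^+ = (\bN \cdot I) \cap [0, 1]$. By Theorem \ref{thm-Fanocone bounded} the class of such K-semistable cones is bounded, so the Reeb vector $\xi_v$ lies in a uniformly bounded region of a fixed finite-dimensional space. Using that $\Theta(X_0, \Delta_0; \xi)$ is continuous in $\xi$ and equals $1$ at $\xi_v$ (with uniform continuity across the bounded family), I would perturb $\xi_v$ to a rational $\xi \in N_\bQ$ close enough so that $(X_0, \Delta_0; \xi)$ is a quasi-regular polarized log Fano cone with $\hvol(x_0, X_0, \Delta_0) \ge \varepsilon$ (intrinsic and unchanged) and $\Theta(X_0, \Delta_0; \xi) \ge \tfrac{1}{2}$. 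Theorem \ref{thm:polarized cone bdd} then yields that these perturbed polarized cones form a bounded set.

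On this bounded set the Reeb vectors $\xi$ lie in a compact region of a fixed finite-dimensional cone, and the continuous function $\xi \mapsto A_{X_0, \Delta_0}(\wt_\xi)$ is bounded above by some constant $M_0 = M_0(n, \varepsilon, I)$. The main obstacle is the final transfer: producing a Kollár component $E$ on $(X, \Delta)$ itself with $A_{X, \Delta}(E) \le M_0$. The plan is to use the fact, established in the proof of the Stable Degeneration Theorem (see \cite{LX-stability-higher-rank, Xu-quasi-monomial}), that the minimizing valuation $v$ on $(X, \Delta)$ can be approximated by a sequence of Kollár components $E_k$ on $(X, \Delta)$ whose associated polarized cone degenerations are $(X_0, \Delta_0; \xi_k)$ with $\xi_k \to \xi_v$; for $k$ sufficiently large, $\Theta(X_0, \Delta_0; \xi_k) \ge \tfrac{1}{2}$ so that Theorem \ref{thm:polarized cone bdd} applies to $(X_0, \Delta_0; \xi_k)$, and the identification $A_{X, \Delta}(E_k) = A_{X_0, \Delta_0}(\wt_{\xi_k})$ (preservation of log discrepancy under the $\bG_m$-equivariant $\bR$-Gorenstein degeneration induced by $E_k$) yields $\mldk(x, X, \Delta) \le A_{X, \Delta}(E_k) \le M_0$. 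The technical heart of this last step is the existence of the approximating Kollár components $E_k$ with the prescribed degenerations and the log discrepancy control — which is the content of the quasi-monomial / higher rational rank framework for K-stability of valuations.
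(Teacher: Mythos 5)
Your overall skeleton (degenerate to the bounded K-semistable cone, perturb the Reeb vector to a rational one, lift it to a Koll\'ar component on $(X,\Delta)$, and transfer the log discrepancy via $A_{X,\Delta}(E)=A_{X_0,\Delta_0}(\wt_{\xi_1})$) is indeed the strategy of the paper, but the step you defer to the ``quasi-monomial / higher rational rank framework'' is exactly where the real work lies, and as stated your transfer step fails. Two distinct uniformity issues are hidden there. First, the existing results only say that, for a \emph{fixed} singularity $x\in(X,\Delta)$, rational valuations in the simplex sufficiently close to the minimizer $v$ are Koll\'ar components inducing degenerations to $(X_0,\Delta_0;\xi_k)$; the size of the admissible perturbation a priori depends on $X$ itself (on a log smooth model, a complement, choices of generators and relations of $R$), not only on the degeneration $(X_0,\Delta_0;\xi_0)$. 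Since infinitely many non-isomorphic singularities degenerate to the same bounded family member, ``for $k$ sufficiently large'' gives you no uniform choice of $\xi_k$. The paper's Section 4.1--4.2 exists precisely to remove this dependence: by lifting a graded free resolution of $R_0=\gr_v R$ to a filtered resolution of $\hat R$ (Lemma \ref{lem:lift resolution}) one shows (Lemma \ref{lem:lift valuation}, Corollary \ref{cor:lift kc}) that the neighbourhood of liftable Reeb vectors depends only on the value semigroup and the syzygy complexity of $R_0$, quantities that are bounded over the bounded family, so that a \emph{single} rational $\xi_1$ works for every singularity degenerating into the family.

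Second, and more bluntly wrong, is the inequality $A_{X,\Delta}(E_k)\le M_0$ for $\xi_k\to\xi_v$. The identity $A_{X,\Delta}(E_k)=A_{X_0,\Delta_0}(\wt_{\xi_k})$ holds only when $\wt_{\xi_k}$ is normalized so that its value group is $\bZ$, i.e.\ $\ord_{E_k}=\wt_{\xi_k}$ for the primitive rescaling of $\xi_k$. If $\xi_v$ is irrational and $\xi_k$ is a rational vector close to it, the denominator of $\xi_k$ must blow up, and the primitive rescaling multiplies the log discrepancy by that denominator; so $A_{X,\Delta}(E_k)$ typically tends to $+\infty$ even though $A_{X_0,\Delta_0}(\wt_{\xi})$ is bounded on a compact piece of the Reeb cone. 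To get a bound you must choose $\xi_1$ rational with \emph{bounded denominator}, uniformly over the family --- which again is what Corollary \ref{cor:lift kc} delivers (one fixed $\xi_1$ per family, then replaced by a fixed multiple with value group $\bZ$). Incidentally, your use of Theorem \ref{thm:polarized cone bdd} for the perturbed cones is unnecessary (the underlying cones already lie in the bounded family produced by Theorem \ref{thm-Fanocone bounded}; the perturbation only changes the polarization), and you also skip the reductions to $I\subseteq\bQ$ and $K_X$ $\bQ$-Cartier, which the paper needs so that $m\Delta$ is a $\bQ$-Cartier Weil divisor with bounded Cartier index and the boundary actually follows the degeneration induced by the lifted valuation.
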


The idea of the proof is quite straightforward: by the Stable Degeneration Theorem \ref{thm:SDC} and the Boundedness Theorem \ref{thm-Fanocone bounded}, we know that klt singularities with ${\rm Coeff}(\Delta)\subseteq I$ and $\hvol(x,X,\Delta)\ge \varepsilon$ degenerate into a bounded family of log Fano cone singularities $(X_0,\Delta_0;\xi)$. 
Then it suffices to show that there is a uniform choice of Koll\'ar components over this family that deforms through the stable degeneration to the original singularities. We achieve this by analyzing the syzygies of the singularities, and show that there is a uniform rational perturbation $\xi_0$ of $\xi$ such that $(X,\Delta)$ also degenerates to $(X_0,\Delta_0;\xi_0)$, yielded by a Koll\'ar component $E$ over $(X,\Delta)$ by \cite[Theorem 4.1]{XZ-SDC}. We conclude by  the fact that the log discrepancy of $E$ is the same as the one of the Koll\'ar component induced by $(X_0,\Delta_0;\xi_0)$. 

\subsection{Filtered resolution}

Before we prove Theorem \ref{thm:mld^K bdd}, we need to recall a few results on syzygies of filtered algebras. These results should be well known but we haven't found a good reference. Throughout this subsection, let $(S,\fm)$ be a complete local Noetherian $\bk$-algebra endowed with an $\fm$-adic filtration\footnote{All filtrations in this section are exhaustive, decreasing, left-continuous, multiplicative and indexed by $\bR$. Here we say a filtration $\cF$ on $M$ is exhaustive if $\cap_\lambda \cF^\lambda M = 0$ and $\cup_\lambda \cF^\lambda M = M$.} $\cF$, i.e. there exists some positive constants $c_0, c_1$ such that 
\[
\fm^{\lceil c_0 \lambda\rceil}\subseteq \cF^\lambda S \subseteq \fm^{\lfloor c_1 \lambda\rfloor}
\]
for all $\lambda\ge 0$. This implies that $S$ is also complete with respect to the filtration $\cF$. Assume that the associated graded ring $\gr_\cF S$ is finitely generated. 

\begin{lem}
Consider an $S$-module $M$ with a compatible filtration also denoted by $\cF$. Then the following conditions are equivalent:
\begin{enumerate}
    \item $\gr_\cF M$ is finitely generated over $\gr_\cF S$.
    \item $M$ is finitely generated, and there exists some $u_i\in \cF^{\mu_i} M$ ($i=1,\dots,N$) such that $\cF^\lambda M = \sum_{i=1}^N \cF^{\lambda-\mu_i} S\cdot u_i$ for all $\lambda\in \bR$.
\end{enumerate}
\end{lem}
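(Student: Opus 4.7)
The implication $(2) \Rightarrow (1)$ is formal. Given generators $u_1,\dots,u_N$ as in (2) and any $\bar m \in \gr_\cF^{\lambda} M$, lift $\bar m$ to $m \in \cF^{\lambda} M$, use (2) to write $m = \sum_i s_i u_i$ with $s_i \in \cF^{\lambda-\mu_i}S$, and pass to graded pieces: only those $s_i$ whose order is exactly $\lambda - \mu_i$ contribute, giving $\bar m = \sum_i \bar s_i \cdot \bar u_i$ with $\bar s_i \in \gr_\cF^{\lambda-\mu_i} S$. Hence the classes $\bar u_i \in \gr_\cF^{\mu_i} M$ generate $\gr_\cF M$ over $\gr_\cF S$.

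For $(1) \Rightarrow (2)$, I would pick homogeneous generators $\bar u_i \in \gr_\cF^{\mu_i} M$ of $\gr_\cF M$ and arbitrary lifts $u_i \in \cF^{\mu_i} M$, then verify (2) by successive approximation. Given $m \in \cF^{\lambda} M$, locate its nonzero leading class in some $\gr_\cF^{\lambda_0} M$ with $\lambda_0 \geq \lambda$; by (1) this class equals $\sum_i \bar s_i^{(0)} \bar u_i$ with $\bar s_i^{(0)} \in \gr_\cF^{\lambda_0 - \mu_i} S$. Lift to $s_i^{(0)} \in \cF^{\lambda_0-\mu_i} S \subseteq \cF^{\lambda-\mu_i} S$, set $m_1 := m - \sum_i s_i^{(0)} u_i \in \cF^{\lambda_1} M$ for some $\lambda_1 > \lambda_0$, and iterate. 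The formal series $s_i := \sum_k s_i^{(k)}$ will then converge in $\cF^{\lambda-\mu_i} S$ by $\cF$-completeness of $S$, and yield the desired decomposition $m = \sum_i s_i u_i$.

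The principal obstacle is to ensure convergence in $M$, i.e.\ that the iteration indices $\lambda_k$ tend to $+\infty$ so that the remainders $m_k$ actually vanish in the limit. Here both the finiteness hypothesis in (1) and the comparability $\fm^{\lceil c_0 \lambda\rceil} \subseteq \cF^{\lambda} S \subseteq \fm^{\lfloor c_1 \lambda\rfloor}$ are essential. First, the submodule $M' := \sum_i S u_i \subseteq M$ satisfies $\gr_\cF(M/M') = 0$ by construction; combined with the comparability with the $\fm$-adic filtration and Krull's intersection on the complete local ring $S$, this can be used to force $M = M'$, making $M$ finitely generated. Once $M$ is finitely generated, an Artin--Rees type bound applied to the filtered surjection $\bigoplus_i S[-\mu_i] \twoheadrightarrow M$ gives $\cF^{\lambda} M \subseteq \fm^{N(\lambda)} M$ with $N(\lambda) \to \infty$, and Krull's intersection then forces $m_k \to 0$, closing the argument. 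I expect this convergence step to be the most delicate part, since the filtration takes real values and the graded pieces of $M$ need not be concentrated on a discrete set of indices.
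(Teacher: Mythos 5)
Your $(2)\Rightarrow(1)$ direction and the overall successive-approximation skeleton for $(1)\Rightarrow(2)$ match the paper, but the issue you yourself flag as "the most delicate part" is exactly where your proposal breaks down, and your proposed workaround does not repair it. The missing idea is that under hypothesis (1) the jumping set $\{\lambda : \gr_\cF^\lambda M\neq 0\}$ \emph{is} discrete: since $\fm^{\lceil c_0\lambda\rceil}\subseteq\cF^\lambda S$ forces $\gr^0_\cF S=\bk$, the finitely generated algebra $\gr_\cF S$ is generated by homogeneous elements of positive degrees, so its weights lie in a finitely generated sub-semigroup of $\bR_{\ge 0}$, which is discrete; finite generation of $\gr_\cF M$ then places the weights of $M$ in finitely many translates of this semigroup. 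This is what the paper uses: discreteness forces the strictly increasing orders $\lambda_k$ in the approximation to tend to $+\infty$, so the coefficient series converge in $S$ (complete with respect to $\cF$ by the $\fm$-adic comparability), and the remainder lies in $\bigcap_\nu\cF^\nu M=0$. Finite generation of $M$ then comes for free, since $\cF^\mu M=M$ for $\mu\ll 0$.

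Your substitute argument has two circularities and still does not yield convergence. First, from $\gr_\cF(M/M')=0$ one only gets $M=\bigcap_\nu\bigl(\cF^\nu M+M'\bigr)$; concluding $M=M'$ requires separatedness of the quotient filtration, and Krull's intersection theorem / Artin--Rees apply to the $\fm$-adic filtration on \emph{finitely generated} modules, whereas finite generation of $M$ is part of what is being proved, and the comparability hypothesis controls $\cF^\lambda S$ against $\fm$, not $\cF^\lambda M$ against $\fm^k M$. Second, the bound $\cF^\lambda M\subseteq\fm^{N(\lambda)}M$ "from the filtered surjection $\bigoplus_i S(-\mu_i)\twoheadrightarrow M$" presupposes that this surjection is strict, i.e. that $\cF^\lambda M=\sum_i\cF^{\lambda-\mu_i}S\cdot u_i$, which is precisely statement (2). (Once $M$ is known to be finitely generated and the filtration separated, such a bound does follow from Chevalley's lemma, so this part is repairable.) But even granting $\cF^\lambda M\subseteq\fm^{N(\lambda)}M$, it does not close the argument: if the orders $\lambda_k$ accumulated at a finite value, the terms $s_i^{(k)}\in\cF^{\lambda_k-\mu_i}S$ would not tend to $0$ in either the $\cF$- or the $\fm$-adic topology, so the series $\sum_k s_i^{(k)}$ need not converge at all. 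In short, there is no way around establishing discreteness of the weights, and that step is exactly what your proposal is missing.
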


\begin{proof}
Clearly (2) implies (1). For the other direction, note that (1) implies the existence of some $\mu\in \bR$ and some $u_i\in \cF^{\mu_i} M$ ($i=1,\dots,N$) such that $\cF^\mu M = M$ and that 
\[
\cF^\lambda M = \cF^{>\lambda} M + \sum_{i=1}^N \cF^{\lambda-\mu_i} S\cdot u_i
\]
for all $\lambda\in \bR$. By the finite generation of $\gr_\cF S$ and $\gr_\cF M$, we also know that the set $\{\lambda\,|\,\gr^\lambda_\cF M\neq 0\}$ is discrete. Thus as $S$ is complete, (2) follows from a repeated application of the above equality.
\end{proof}

We say that the filtration $\cF$ on $M$ is good if any of the equivalent conditions in the above lemma holds. In particular, if $\cF$ is a good filtration then there exists some $\mu\gg 0$ such that 
\[
\cF^{\lambda+\mu} M = \cF^\lambda S\cdot \cF^\mu M
\]
for all $\lambda\ge 0$. Moreover, $M$ is complete with respect to the good filtration $\cF$; in other words, if $u_k\in \cF^{\lambda_k}M$ ($k=1,2,\dots$) is an infinite sequence such that $\lambda_k\to +\infty$, then the formal series $\sum_{k=1}^{\infty} u_k$ converges in $M$ (this follows from the equivalent condition (2) and the completeness of $S$).

\begin{lem} \label{lem:criterion for filtered exactness}
Let $M'\stackrel{\psi}{\to} M\stackrel{\varphi}{\to} M''$ be a complex of filtered $S$-modules. Assume that the filtration on $M$ is good and the induced complex
\begin{equation} \label{eq:graded cpx exact}
\xymatrix{
\gr(M')\ar[r]^-{\bar{\psi}} & \gr(M) \ar[r]^-{\bar{\varphi}} & \gr(M'')
}
\end{equation}
is exact. Then $M'\stackrel{\psi}{\to} M\stackrel{\varphi}{\to} M''$ is filtered exact, i.e.,
\begin{equation} \label{eq:filtered exact}
\xymatrix{
\cF^\lambda M'\ar[r]^-{\psi} & \cF^\lambda M\ar[r]^-{\varphi} & \cF^\lambda M''
} 
\end{equation}
is exact for all $\lambda\in \bR$.
\end{lem}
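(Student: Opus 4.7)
Fix $\lambda \in \bR$ and take $u \in \cF^\lambda M$ with $\varphi(u) = 0$; the goal is to produce some $v \in \cF^\lambda M'$ with $\psi(v) = u$. My strategy is successive approximation: I will construct $v$ as a convergent infinite series $\sum_k v_k$ whose partial sums lift $u$ modulo deeper and deeper pieces of the filtration on $M$, with each new term supplied by the exactness of the associated graded complex (\ref{eq:graded cpx exact}).

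The first step is to record a discreteness property. Since $\cF$ on $M$ is good, $\gr_\cF M$ is finitely generated over the finitely generated $\bk$-algebra $\gr_\cF S$, so $\{\mu \in \bR : \gr^\mu_\cF M \neq 0\}$ is a discrete subset of $\bR$. I enumerate the jumps of $\cF^\bullet M$ at or above $\lambda$ as $\lambda = \lambda_0 < \lambda_1 < \lambda_2 < \cdots$ with $\lambda_k \to +\infty$, so that $\cF^{>\lambda_k} M = \cF^{\lambda_{k+1}} M$ for each $k$. The inductive construction now proceeds as follows. Given $v_0, \dots, v_{k-1}$ already constructed with $v_j \in \cF^{\lambda_j} M'$, set
\[
u_k := u - \psi(v_0 + \cdots + v_{k-1}) \in \cF^{\lambda_k} M,
\]
noting that $\varphi(u_k) = \varphi(u) = 0$, so the class $\bar u_k \in \gr^{\lambda_k}_\cF M$ lies in $\ker \bar\varphi$. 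By the assumed exactness of (\ref{eq:graded cpx exact}) there exists $\bar v_k \in \gr^{\lambda_k}_\cF M'$ with $\bar\psi(\bar v_k) = \bar u_k$; lifting to $v_k \in \cF^{\lambda_k} M'$ yields $u_{k+1} = u_k - \psi(v_k) \in \cF^{>\lambda_k} M = \cF^{\lambda_{k+1}} M$, which closes the induction.

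It remains to set $v := \sum_{k \ge 0} v_k$ and verify $\psi(v) = u$: since each $v_k$ lies in $\cF^{\lambda_k} M' \subseteq \cF^\lambda M'$ and $\lambda_k \to \infty$, the series converges in $\cF^\lambda M'$, and the partial sums $\psi\bigl(\sum_{j \le k} v_j\bigr) = u - u_{k+1}$ converge to $u$ in $M$, so indeed $\psi(v) = u$. The main obstacle I anticipate is precisely justifying the convergence of $\sum v_k$ inside $M'$: the hypothesis of the lemma provides goodness, and hence completeness, only for $\cF$ on $M$, whereas the argument also needs enough completeness of $M'$ (or at least of the submodule containing all $v_k$) with respect to its induced filtration. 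In the setting of this paper this is harmless, since the relevant $M'$ arise as finitely generated filtered modules over the complete local ring $S$ with good filtrations, for which completeness is automatic by the earlier lemma; so in the applications the series makes sense in $\cF^\lambda M'$ and the proof closes.
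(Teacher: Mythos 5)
Your inductive construction is exactly the paper's argument (the paper phrases the graded exactness as the inclusion $\ker(\varphi)\cap \cF^\lambda M \subseteq \psi(\cF^\lambda M') + \ker(\varphi)\cap \cF^{>\lambda} M$ and iterates it over the discrete set of jumping numbers, then uses $\cap_\mu \cF^\mu M = 0$ to conclude $u=\psi(u')$). But the obstacle you flag at the end is a genuine gap in your write-up, and it is precisely the point the lemma's stated generality requires you to handle: the hypotheses say nothing about $M'$ beyond it being a filtered $S$-module, so the convergence of $\sum_k v_k$ in $\cF^\lambda M'$ is not available, and deferring to ``in the applications the relevant $M'$ are good'' proves a weaker statement than the one asserted (and than what a reader of the lemma is entitled to use).

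The paper closes this gap with a short preliminary reduction that your argument is missing: since $\gr_\cF S$ is a finitely generated $\bk$-algebra (hence Noetherian) and $\gr(M)$ is finitely generated, $\ker(\bar{\varphi})$ is generated by finitely many homogeneous elements; choose homogeneous preimages in $\gr(M')$ under $\bar{\psi}$, lift them to elements $w_i\in \cF^{\mu_i}M'$, and replace $M'$ by the submodule $N=\sum_i S\,w_i$ equipped with the good filtration $\cF^\lambda N=\sum_i \cF^{\lambda-\mu_i}S\cdot w_i$ (which sits inside $\cF^\lambda M'$ by compatibility). The graded complex $\gr(N)\to\gr(M)\to\gr(M'')$ is still exact, the filtration on $N$ is good, hence $N$ is complete and your series converges there; and filtered exactness with $N$ in place of $M'$ trivially implies it for $M'$ because $\psi(\cF^\lambda N)\subseteq\psi(\cF^\lambda M')$. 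With this one extra step your proof coincides with the paper's; without it, the key convergence claim is unsupported under the lemma's hypotheses.
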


\begin{proof}
Since $\gr(M)$ is finitely generated over $\gr_\cF S$ by assumption, we may replace $M'$ by a submodule so that \eqref{eq:graded cpx exact} is still exact and $\gr(M')$ is finitely generated. If \eqref{eq:filtered exact} is filtered exact for this submodule, then the same is true for $M'$. Thus it suffices to prove the lemma when the filtration on $M'$ is also good.

Exactness of \eqref{eq:graded cpx exact} implies that
\[
\ker(\varphi)\cap \cF^\lambda M \subseteq \psi(\cF^\lambda M') + \ker(\varphi)\cap \cF^{>\lambda} M
\]
for any $\lambda \in \bR$. By induction on the filtration index $\lambda$ (which belongs to a discrete set) this in turn implies that for any $u\in \ker(\varphi)\cap \cF^\lambda M$ there exists an infinite sequence $u_k\in \cF^{\lambda_k}M'$ with $\lambda\le \lambda_1 < \lambda_2< \dots$ and $\lambda_k\to \infty$ such that $u-\psi(u')\in \cF^\mu M$ for any $\mu\in \bR$, where $u'=\sum_k u_k \in \cF^\lambda M'$ (the series converges since the filtration on $M'$ is good). Since the filtration on $M$ is assumed to be exhaustive and in particular $\cap_\mu \cF^\mu M = 0$, we deduce that $u=\psi(u')$ and hence \eqref{eq:filtered exact} is filtered exact.
\end{proof}

The following is the main result of this subsection. We denote by $S(\mu)$ the filtered free $S$-module that is isomorphic to $S$ as an $S$-module but with a filtration given by $\cF^\lambda S(\mu) = \cF^{\lambda+\mu} S$. 

\begin{lem} \label{lem:lift resolution}
Let $M$ be an $S$-module with a compatible good filtration and let 
\begin{equation} \label{eq:graded free resolution}
\xymatrix{
\cdots \ar[r]^-{\bphi_2} & \oM_2 \ar[r]^-{\bphi_1} & \oM_1 \ar[r]^-{\bphi_0} & \gr(M) \ar[r] & 0 
}
\end{equation}
be a graded free resolution of $\gr(M)$, i.e. for each $i$ there exists some $\lambda_{ij}\in \bR$ such that $\oM_i\cong \oplus_j \gr_\cF(S)(\lambda_{ij})$. Then it can be lifted to a filtered exact free resolution
\[
\xymatrix{
\cdots \ar[r]^{\varphi_2} & M_2 \ar[r]^{\varphi_1} & M_1 \ar[r]^{\varphi_0} & M \ar[r] & 0
}
\]
of $M$, i.e. the associated graded complex is \eqref{eq:graded free resolution}.
\end{lem}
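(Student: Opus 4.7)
The plan is to build the filtered free modules $M_i$ and the filtered maps $\varphi_i$ inductively on $i$, so that at each stage the associated graded reproduces the data of \eqref{eq:graded free resolution} and the partial complex built so far is filtered exact. The main mechanism is to apply Lemma \ref{lem:criterion for filtered exactness} repeatedly, converting graded exactness into filtered exactness one step at a time.

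For the base case I set $M_0 = \oplus_j S(\lambda_{0j})$, and for each generator $\bar{e}_{0j}$ of the $j$-th summand of $\oM_0 = \oplus_j \gr_\cF(S)(\lambda_{0j})$ (sitting in degree $-\lambda_{0j}$) I pick an arbitrary preimage $e_{0j} \in \cF^{-\lambda_{0j}} M$ of $\bphi_0(\bar{e}_{0j}) \in \gr^{-\lambda_{0j}} M$. Defining $\varphi_0$ to send the standard basis of $M_0$ to $\{e_{0j}\}$ gives a filtered map with $\gr(\varphi_0) = \bphi_0$, and Lemma \ref{lem:criterion for filtered exactness} applied to the complex $M_0 \to M \to 0$ then yields filtered surjectivity of $\varphi_0$.

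For the inductive step, assume $\varphi_0, \dots, \varphi_{i-1}$ have been defined so that $M_i \xrightarrow{\varphi_{i-1}} M_{i-1} \xrightarrow{\varphi_{i-2}} M_{i-2}$ is filtered exact at $M_{i-1}$ (with $M_{-1}:=M$ and $\varphi_{-2}$ the zero map). To lift $\bphi_i$, I first pick any $x \in \cF^{-\lambda_{i+1,j}} M_i$ whose class in the associated graded is $\bphi_i(\bar{e}_{i+1,j})$. Because $\bphi_{i-1}\circ\bphi_i = 0$, the image $\varphi_{i-1}(x)$ lies in $\cF^{\mu} M_{i-1}$ for some $\mu > -\lambda_{i+1,j}$ and belongs to $\ker(\varphi_{i-2})$. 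The inductive filtered exactness then furnishes $y \in \cF^{\mu} M_i$ with $\varphi_{i-1}(y) = \varphi_{i-1}(x)$, so that $e_{i+1,j}:=x-y$ is an honest lift in $\ker(\varphi_{i-1})\cap\cF^{-\lambda_{i+1,j}} M_i$ whose graded class is still $\bphi_i(\bar{e}_{i+1,j})$. Setting $M_{i+1} = \oplus_j S(\lambda_{i+1,j})$ with $\varphi_i$ sending the standard basis to $\{e_{i+1,j}\}$ produces a filtered map with $\gr(\varphi_i) = \bphi_i$ and $\varphi_{i-1}\circ\varphi_i = 0$. Each $M_i$ carries a good filtration (it is a finite direct sum of shifted copies of $S$, and $\gr_\cF S$ is finitely generated), so a final application of Lemma \ref{lem:criterion for filtered exactness} to the triple $M_{i+1} \to M_i \to M_{i-1}$ upgrades graded to filtered exactness at $M_i$, closing the induction.

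The substantive point is the passage from an \emph{approximate} lift (one killing the image only modulo a higher filtration piece) to an \emph{exact} lift inside $\ker(\varphi_{i-1})$. This is what the filtered exactness already established via Lemma \ref{lem:criterion for filtered exactness} provides, and it rests on all of the $M_i$ having good filtrations; this in turn relies on the mild implicit assumption that \eqref{eq:graded free resolution} is termwise of finite rank, which is the relevant setting here since $\gr_\cF S$ is Noetherian.
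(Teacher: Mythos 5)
Your proposal is correct and follows essentially the same route as the paper: both arguments are an induction powered by Lemma \ref{lem:criterion for filtered exactness}, and your correction $x-y$ of an arbitrary filtered lift (using the filtered exactness already established one step earlier) is exactly the computation the paper performs to show $\gr(\ker\varphi_0)=\ker(\bphi_0)$ before recursing with the kernel $M'=\ker(\varphi_0)$ in place of $M$. The only difference is bookkeeping — you lift generators of each free module directly instead of restarting the base step with the filtered kernel — and your explicit remark that the $M_i$ carry good filtrations (finite ranks, $\gr_\cF S$ finitely generated) matches what the paper uses implicitly.
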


\begin{proof}
First set $M_1=\oplus_j S(\lambda_{1j})$ and lift $\bar{\varphi}_0$ to a map $\varphi_0\colon M_1\to M$. By Lemma \ref{lem:criterion for filtered exactness}, the exactness of $\oM_1\to \gr(M)\to 0$ implies the filtered exactness of $M_1\to M\to 0$; in particular, $\varphi_0$ is surjective. Let $M'=\ker(\varphi_0)$ and equip it with the filtration induced from $M_1$. We claim that $\gr(M')=\ker(\bar{\varphi_0})$. Indeed, $\gr(M')$ injects into $\gr(M_1)=\oM_1$ as a general property of induced filtrations on submodules. Clearly we also have $\gr(M')\subseteq \ker(\bar{\varphi_0})$. On the other hand, any homogeneous element $\bar{u}\in \ker(\bar{\varphi_0})$ is represented by some $u\in \cF^\lambda M_1$ such that $\varphi_0(u)\in \cF^{>\lambda} M$. As $M_1\to M\to 0$ is filtered exact, this implies that $\varphi_0(u) = \varphi_0(u_1)$ for some $u_1\in \cF^{>\lambda} M_1$. But then $u'=u-u_1\in \ker(\varphi_0)=M'$; moreover, we have $u'\in \cF^\lambda M'$ and it is also a lift of $\bar{u}$. It follows that $\ker(\bar{\varphi_0})\subseteq \gr(M')$. This proves the claim. From here we deduce that $\cdots\to\oM_2\to \gr(M')$ is also a graded free resolution. By construction, the sequence $0\to M'\to M_1\to M\to 0$ is also filtered exact. Thus we may repeat the same argument above with $M'$ in place of $M$ and the lemma follows.
\end{proof}

\subsection{Lifting valuations}

Next consider a singularity $x\in X=\Spec(R)$ and a quasi-monomial valuation $v\in \Val_{X,x}$ such that $R_0:=\gr_v R$ is finitely generated. As in \cite[paragraph after Theorem 4.1]{XZ-SDC}, every $\bR$-divisor $\Delta$ has an induced degeneration $\Delta_0$ on $X_0:=\Spec(R_0)$. The natural grading on $R_0$ induces a Reeb vector field $\xi_v$ on $X_0$ generating a torus $\bT=\langle \xi_v \rangle$. In this setup, using terminologies from K-stability, we say that $x\in (X,\Delta)$ degenerates into $(X_0,\Delta_0;\xi_0)$ via the $\bR$-test configuration induced by $v$. For the application to Theorem \ref{thm:mld^K bdd}, $x\in (X,\Delta)$ will be a klt singularity whose local volume is bounded from below, and $(X_0,\Delta_0;\xi_0)$ is its K-semistable log Fano cone degeneration. Denote by 
\[
\Phi_{\xi_v}(R_0):=\wt_{\xi_v}(R_0\setminus\{0\})=v(R\setminus\{0\})\subseteq \bR_{\ge 0}
\]
the value semigroup which is a discrete set that only depends on the graded ring $R_0$ and the Reeb vector $\xi_v$. We shall identify the Reeb cone $\ft^+_\bR$ with the set of semigroup homomorphisms $\Phi_{\xi_v}(R_0)\setminus \{0\} \to \bR_+$ (in particular, $\xi_v$ is identified with the natural inclusion $\Phi_{\xi_v}(R_0) \subseteq  \bR_{\ge 0}$). The question we need to address is which (rational) perturbation of $\xi_v$ inside the Reeb cone can be lifted to a (divisorial) valuation on $X$, as such a lift will help us control the $\mldk$ of the singularity $x\in X$.

For any finite set $\{\bar{u}_1,\dots,\bar{u}_N\}$ of homogeneous generators of $R_0$, 
we have a graded surjection
\begin{equation} \label{eq:S->R_0}
\bphi_0\colon S:=\bk[x_1,\dots,x_N]\to R_0
\end{equation}
sending $x_i$ to $\bar{u}_i$. Here the grading of $S$ is defined by setting $\deg(x_i)=\deg(\bu_i)$. This also induces a filtration on $\hat{S}:=\bk[\![x_1,\dots,x_N]\!]$ such that $\gr(\hat{S})\cong S$.

\begin{defn} \label{def:syzygy complexity}
Let $c>0$ be a positive number. We say that the syzygy complexity of the graded algebra $R_0$ is at most $c$, if there exist homogeneous generators $\bar{u}_1,\dots,\bar{u}_N$ of $R_0$ (for some $N\le c$) such that $\deg(\bu_i)\le c$ for all $i$, and for the induced map $\bphi_0$ in \eqref{eq:S->R_0}, there exists a graded free resolution of $R_0$ (viewed as a graded $S$-module)
\begin{equation} \label{eq:resolution of R_0}
\xymatrix{
\oM_2\ar[r]^-{\bphi_2} & \oM_1\ar[r]^-{\bphi_1} & S \ar[r]^-{\bphi_0} & R_0 \ar[r] & 0 
}
\end{equation}
such that all the matrix entries of $\bphi_1$ and $\bphi_2$ have degrees at most $c$.
\end{defn}

The technical result of this subsection is that the liftability of the perturbed Reeb vector only depends on the syzygy complexity of $R_0$ and the value semigroup $\Phi_{\xi_v}(R_0)$. To prepare for such a statement, fix some $c>0$ such that the syzygy complexity of $R_0$ is at most $c$, and choose some generators $\bar{u}_i$ and a graded free resolution of $R_0$ as in \eqref{eq:resolution of R_0} that satisfy the conditions in Definition \ref{def:syzygy complexity}. Let $\lambda_i:=\deg(\bu_i)$. By Lemma \ref{lem:lift resolution}, we can lift \eqref{eq:resolution of R_0} to a filtered free resolution
\begin{equation} \label{eq:resolution of R}
\xymatrix{
M_2\ar[r]^-{\varphi_2} & M_1\ar[r]^-{\varphi_1} & \hat{S} \ar[r]^-{\varphi_0} & \hat{R} \ar[r] & 0
}   
\end{equation}
of the $\hat{S}$-module $\hat{R}$ (the completion of $R$). In concrete terms, this means that the matrix entries of the homomorphism $\bphi_i$ are given by the initial terms of the matrix entries of $\varphi_i$. For example, to lift $\bphi_0$ to $\varphi_0$, we choose a lift $u_i\in R$ of $\bu_i$ for each $i$, namely, $u_i\in \fa_{\lambda_i} (v)$ and its reduction in $\fa_{\lambda_i}(v)/\fa_{>\lambda_i}(v)$ is $\bu_i$. We then lift $\bphi_0$ to a map
\[
\varphi_0\colon \bk[x_1,\dots,x_N]\to R
\]
sending $x_i$ to $u_i$, which is surjective after passing to the completion. 

Each Reeb vector $\xi\in \ft^+_\bR$ induces a valuation of $R_0$, and there is a natural candidate for its lift to a valuation of $R$. Indeed, since we identify Reeb vectors with homomorphism $\Phi_{\xi_v}(R_0)\setminus \{0\} \to \bR_+$, there is a natural map 
\[
\iota\colon \ft^+_\bR \to \bR^N_+, \quad \xi\mapsto (\xi(\lambda_1),\dots,\xi(\lambda_N)),
\]
whose image $\iota(\ft_\bR^+)$ is an open subset of the rational envelope $\Lambda\subseteq \bR^N$ of $\lambda=\iota(\xi_v)=(\lambda_1,\dots,\lambda_N)$ (i.e. the smallest rational affine linear subspaces in $\bR^N$ containing $\lambda$). 
For any $\mu\in \bR^N_+$ we get a monomial valuation $\wt_\mu$ on $\hat{S}$ which induces a filtration $\cF_\mu$ on $\hat{S}$. Through the surjection $\varphi_0\colon \hat{S}\to \hat{R}$, it also induces a filtration (also denoted by $\cF_\mu$) on $\hat{R}$. For any $u\in\hat{R}\setminus \{0\}$ set
\begin{equation} \label{eq:candidate lift}
v_\mu(u):=\max\{t\in\bR\,|\,u\in \cF_{\mu}^t \hat{R}\};
\end{equation}
when $\mu=\iota(\xi)$ this is our candidate lift of $\xi$. Note that $v_\mu$ need not be a valuation, and our next task is find conditions that make it into a valuation.

For this we need an auxiliary result. 
For any $\mu\in \bR^N_+$ and any $u\in \hat{S}$, we denote by $\mathrm{in}_\mu (u)$ the initial term of $u$ with respect to $\wt_\mu$. 

\begin{lem} \label{lem:uniform nbh}
For any $u\in \hat{S}$, there exists an open neighbourhood $U\subseteq \ft^+_\bR$ of $\xi_v$ depending only on $\lambda$ and $\wt_\lambda (u)$ such that $\mathrm{in}_{\mu} (u) = \mathrm{in}_\lambda (u)$ for all $\mu\in \iota(U)$.
\end{lem}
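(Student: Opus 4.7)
The plan is a direct analysis of the monomial expansion. Write $u = \sum_{\alpha \in \bZ_{\ge 0}^N} c_\alpha x^\alpha$ and set $s := \wt_\lambda(u)$; since each $\lambda_i > 0$, the set $I_0 := \{\alpha \in \Supp(u) : \langle \lambda, \alpha \rangle = s\}$ is finite and $\mathrm{in}_\lambda(u) = \sum_{\alpha \in I_0} c_\alpha x^\alpha$. It suffices to exhibit an open neighborhood of $\lambda$ in $\Lambda$, depending only on $\lambda$ and $s$, for which the following hold for every $\mu$ in it: (a) $\langle \mu, \alpha \rangle$ takes a common value $s_\mu$ on $I_0$, and (b) $\langle \mu, \alpha \rangle > s_\mu$ for every $\alpha \in \bZ_{\ge 0}^N$ with $c_\alpha \ne 0$ and $\langle \lambda, \alpha \rangle > s$. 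Together these force $\wt_\mu(u) = s_\mu$ with the minimum attained exactly on $I_0$, yielding $\mathrm{in}_\mu(u) = \mathrm{in}_\lambda(u)$.

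For (a) the plan is to invoke the defining property of the rational envelope. If $\alpha,\beta \in \bZ_{\ge 0}^N$ satisfy $\langle \lambda, \alpha - \beta \rangle = 0$, then the rational linear functional $x \mapsto \langle x, \alpha - \beta \rangle$ vanishes at $\lambda$, hence on all of $\Lambda$; thus $\langle \mu, \alpha \rangle = \langle \mu, \beta \rangle$ for every $\mu \in \Lambda$. The common value $s_\mu$ depends linearly on $\mu \in \Lambda$, satisfies $s_\lambda = s$, and is intrinsic to the pair $(\lambda, s)$.

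For (b) the plan is to split the candidate indices into a large-$|\alpha|$ part, controlled by a uniform estimate, and a small-$|\alpha|$ part with only finitely many cases. Set $\lambda_{\min} := \min_i \lambda_i > 0$ and restrict $\mu$ to the box $B_0 = \{\mu : \lambda_i/2 \le \mu_i \le 2\lambda_i \text{ for all } i\}$. Taking any $\alpha_0 \in I_0$ yields $s_\mu = \langle \mu, \alpha_0 \rangle \le 2s$, while $\langle \mu, \alpha \rangle \ge (\lambda_{\min}/2)\,|\alpha|$ for every $\alpha \in \bZ_{\ge 0}^N$ with $|\alpha| := \sum_i \alpha_i$. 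Hence $\langle \mu, \alpha \rangle > s_\mu$ automatically whenever $|\alpha| > 4s/\lambda_{\min}$. The remaining multi-indices form the finite set $J = \{\alpha : |\alpha| \le 4s/\lambda_{\min},\ \langle \lambda, \alpha \rangle > s\}$, which depends only on $\lambda$ and $s$. For each $\alpha \in J$ the strict inequality $\langle \mu, \alpha \rangle > s_\mu$ holds in an open neighborhood of $\lambda$ in $\Lambda$ by continuity of the linear functions $\mu \mapsto \langle \mu, \alpha \rangle$ and $\mu \mapsto s_\mu$; intersecting these finitely many neighborhoods with $B_0$ and then pulling back via $\iota$ produces the required $U \subseteq \ft^+_\bR$.

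The main obstacle is the uniformity demand: the neighborhood must work simultaneously for all $u \in \hat{S}$ with $\wt_\lambda(u) = s$, so one must control infinitely many potential support indices at once. The resolution is the coarse estimate in the box $B_0$, which handles all but finitely many $\alpha$ without further hypothesis on $u$, leaving only the finitely many conditions indexed by the intrinsic set $J = J(\lambda, s)$.
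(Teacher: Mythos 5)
Your proposal is correct and follows essentially the same route as the paper: the paper's two ``elementary facts'' (equal $\lambda$-weights stay equal on $\iota(\ft^+_\bR)$ via the rational envelope, and a strict weight inequality persists on a neighbourhood depending only on $\lambda$ and the smaller weight) are exactly your steps (a) and (b), with your box estimate on $B_0$ plus the finite set $J$ supplying the uniformity in the higher-weight monomials that the paper leaves to the reader.
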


\begin{proof}
Every monomial in $\hat{S}$ is of the form $x_1^{a_1}\cdots x_N^{a_N}$ for some $(a_1,\dots,a_N)\in \bN^N$. Observe that (both facts are elementary to verify):
\begin{enumerate}
    \item if $a,b\in \bN^N$ and $\langle \lambda,a \rangle = \langle \lambda,b \rangle$, then for any $\xi\in\ft^+_\bR$ it also holds that $\langle \iota(\xi),a \rangle = \langle \iota(\xi),b \rangle$;
    \item if $a,b\in \bN^N$ is such that $\langle \lambda,a \rangle < \langle \lambda,b \rangle$, then there exists an open neighbourhood $U\subseteq \bR^N$ of $\lambda$ depending only on $a$ and $\lambda$ such that $\langle \mu,a \rangle < \langle \mu,b \rangle$ for all $\mu\in U$.
\end{enumerate}
Note that there are only finitely many $a\in \bN^N$ such that $\langle \lambda,a \rangle = \wt_\lambda (u)$. The lemma then follows by translating these facts into statements about weights of monomials in $\hat{S}$.
\end{proof}

By Lemma \ref{lem:uniform nbh}, we can choose a small neighbourhood $U\subseteq \ft_\bR^+$ of $\xi_v$, depending only on $\lambda$ and the weights of the matrix entries of $\varphi_1$ and $\varphi_2$ (with respect to $\lambda$), such that $\mathrm{in}_\mu (\varphi_i) = \mathrm{in}_\lambda (\varphi_i)$ for all $i=1,2$ and all $\mu\in \iota(U)\subseteq \bR^N_+$. In particular, the first three terms $\oM_2\to \oM_1\to S$ in \eqref{eq:resolution of R_0} is the reduction of $M_2\to M_1\to \hat{S}$ with respect to $\wt_\mu$ for any $\mu\in \iota(U)$.  As there are only finitely many values in the semigroup $\Phi_{\xi_v}(R_0)$ that are bounded from above, we see that in fact $U$ only depends on $\Phi_{\xi_v}(R_0)$ together with the syzygy complexity of $R_0$ (that is, if $R_0$ has syzygy complexity at most $c$, then $U$ can be chosen in terms of $\Phi_{\xi_v}(R_0)$ and $c$). We now show that every $\xi\in U$ lifts to a valuation of $R$ through \eqref{eq:candidate lift}.

\begin{lem} \label{lem:lift valuation}
For any $\xi\in U$, there exists a quasi-monomial valuation $v_\xi\in \Val_{X,x}$ such that $\gr_{v_\xi} R \cong R_0$ and its induced Reeb vector on $X_0$ is $\xi$. 
\end{lem}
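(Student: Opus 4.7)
The plan is to identify $v_\xi := v_\mu$ (for $\mu = \iota(\xi)$, as given by \eqref{eq:candidate lift}) as a quasi-monomial valuation on $R$ by first computing the $\mu$-associated graded ring of $\hat R$. Since $\xi \in U$, the choice of $U$ together with Lemma \ref{lem:uniform nbh} ensures that $\mathrm{in}_\mu \varphi_i = \mathrm{in}_\lambda \varphi_i = \bphi_i$ (as matrices with entries in $S$) for $i = 1, 2$. Because $\mu$ lies in the rational envelope $\Lambda$ of $\lambda$, each matrix entry of $\bphi_i$ -- a priori $\lambda$-homogeneous of rational weight -- is also $\mu$-homogeneous, with the same monomial support but a reweighted degree. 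Equipping the free modules $M_1, M_2$ in \eqref{eq:resolution of R} with the $\mu$-filtrations that make $\varphi_1, \varphi_2$ into filtered maps, the associated $\mu$-graded complex $\gr_\mu M_2 \to \gr_\mu M_1 \to \gr_\mu \hat S$ is identified, as a complex of $S$-modules, with the first three terms $\oM_2 \to \oM_1 \to S$ of \eqref{eq:resolution of R_0}, hence is exact. Applying Lemma \ref{lem:criterion for filtered exactness} to the $\mu$-filtration, \eqref{eq:resolution of R} is $\mu$-filtered exact, so in particular $\varphi_1$ is $\mu$-strict. Passing to $\gr_\mu$, I obtain
\[
\gr_{\cF_\mu} \hat R \;\cong\; \gr_\mu \hat S / \mathrm{image}(\gr_\mu \varphi_1) \;\cong\; S/\mathrm{image}(\bphi_1) \;\cong\; R_0
\]
as $\bk$-algebras, with the $\mu$-grading on the left matching the $\xi$-Reeb grading on the right (both assign $\bar u_i$ the weight $\mu_i = \xi(\lambda_i)$).

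This structural identification then translates into the valuation properties of $v_\xi$. Multiplicativity $v_\xi(fg) = v_\xi(f) + v_\xi(g)$ follows from $\gr_{\cF_\mu} \hat R \cong R_0$ being an integral domain. Separation of the filtration (so that $v_\xi$ is $\bR$-valued on $R \setminus \{0\}$) follows from the inclusion $\cF_\mu^t \hat R \subseteq \fm^{\lceil t/\max_i \mu_i\rceil} \hat R$, obtained by comparing $\mu$-weights of monomials with their $(x_1, \dots, x_N)$-adic order, together with $\cap_k \fm^k \hat R = 0$. The center is $x$, since $\gr^0_{\cF_\mu} \hat R = R_{0,0} = \bk$ forces $\cF_\mu^{>0} \hat R = \hat\fm$. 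Restricting to $R$, the induced inclusion $\gr_{v_\xi} R \hookrightarrow \gr_{\cF_\mu} \hat R \cong R_0$ is in fact surjective because the lifts $u_i \in R$ of the algebra generators $\bar u_i$ satisfy $v_\xi(u_i) = \mu_i$ and their classes hit the $\bar u_i$; the matching of gradings then gives that the induced Reeb vector is $\xi$.

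For the quasi-monomial property, I will use that $v = v_\lambda$ is itself quasi-monomial by hypothesis: it is given by a monomial valuation on some log smooth chart $(\pi\colon Y \to X, E = \sum E_i)$ at a stratum point $\eta$, with weights parametrized by $\lambda$. The natural candidate for $v_\xi$ is the monomial valuation on the same chart with weights perturbed according to $\mu$; verifying that this coincides with $v_\mu$ defined in \eqref{eq:candidate lift} amounts to comparing values on the generators $u_i$ and invoking uniqueness of a valuation on $R$ centered at $x$ with prescribed graded ring and Reeb structure (cf. \cite{XZ-SDC}). The main obstacle in the overall argument is the first step: choosing the correct $\mu$-shifts on $M_1, M_2$ so that the matrix entries of $\varphi_i$ are $\mu$-homogeneous of the expected degrees and $\varphi_1$ is $\mu$-strict; once this is in place, the remaining verifications reduce to standard facts about filtered rings and quasi-monomial perturbations.
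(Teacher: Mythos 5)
Your overall route is the paper's: set $v_\xi:=v_\mu$ for $\mu=\iota(\xi)$, prove $\gr_\mu\hat{R}\cong R_0$ by establishing strictness of $\varphi_1$ with respect to $\cF_\mu$, then get multiplicativity from $R_0$ being a domain and quasi-monomiality from finite generation. However, the step carrying the real content---strictness of $\varphi_1$---is exactly where your argument has a gap. Lemma \ref{lem:criterion for filtered exactness} transfers graded exactness at the \emph{middle} spot of a three-term complex to exactness of the filtered pieces at that same spot. The only spot where you know graded exactness non-circularly is $\oM_1$, via \eqref{eq:resolution of R_0} and your identification $\mathrm{in}_\mu\varphi_i=\bphi_i$; applying the lemma to $M_2\to M_1\to \hat{S}$ therefore yields $\ker(\varphi_1)\cap\cF_\mu^\lambda M_1=\varphi_2(\cF_\mu^\lambda M_2)$, i.e.\ strictness of $\varphi_2$ and a statement about the \emph{kernel} of $\varphi_1$, not its image. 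Strictness of $\varphi_1$, namely $\varphi_1(\cF_\mu^\lambda M_1)=\hat{I}\cap\cF_\mu^\lambda\hat{S}$, is filtered exactness at the spot $\hat{S}$ of the complex $M_1\to\hat{S}\to\hat{R}$, and to obtain that from Lemma \ref{lem:criterion for filtered exactness} you would need the graded complex $\oM_1\to S\to\gr_\mu\hat{R}$ to be exact at $S$, i.e.\ $\ker(S\to\gr_\mu\hat{R})=\mathrm{im}\,\bphi_1$---which is precisely the claim $\gr_\mu\hat{R}\cong R_0$ you are trying to prove. As written, ``hence $\varphi_1$ is $\mu$-strict'' is either a non sequitur or circular.

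The missing piece is the successive-approximation argument the paper carries out: for $u\in\hat{I}\cap\cF^\lambda_\mu\hat{S}$, choose a preimage $r\in\varphi_1^{-1}(u)$ maximizing $v_\mu(r)$ (possible because the jumping numbers of the filtered free module $M_1$ form a discrete set and $v_\mu(r)\le v_\mu(u)$); if $v_\mu(r)<v_\mu(u)$, the initial form of $r$ lies in $\ker\bphi_1=\mathrm{im}\,\bphi_2$, so subtracting $\varphi_2(s)$ for a lift $s$ strictly increases $v_\mu(r)$, a contradiction. This is exactly where the control of $\mathrm{in}_\mu\varphi_2$ (your Lemma \ref{lem:uniform nbh} input, and the second syzygies in Definition \ref{def:syzygy complexity}) genuinely enters; your setup assembles the right ingredients, but this deduction must be made explicitly rather than quoted from Lemma \ref{lem:criterion for filtered exactness}. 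Separately, your sketch of quasi-monomiality---perturbing monomial weights on a log smooth model realizing $v_\lambda$ and invoking a ``uniqueness of a valuation with prescribed graded ring'' principle---is not a statement you can simply cite; the clean argument, as in the paper, is that $v_\mu$ is induced by the monomial valuation $\wt_\mu$ on $\hat{S}$ with prime initial ideal (once \eqref{eq:gr_mu = R_0} is known), hence is quasi-monomial by \cite[Discussion 3.16]{ELS03}.
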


\begin{proof}
Let $\mu = \iota(\xi)$. Recall that we have a filtration $\cF_\mu$ of $\hat{R}$ induced by the valuation $\wt_\mu$ on $\hat{S}$. Similarly, we have an induced filtration (still denoted by $\cF_\mu$) on $\hat{I}:=\ker(\varphi_0)\subseteq \hat{S}$. Note that $\hat{I}=\varphi_1 (M_1)$ since \eqref{eq:resolution of R} is exact. As $M_1$, $M_2$ are free $\hat{S}$-modules, there are also natural filtrations (again denoted $\cF_\mu$) on them so that \eqref{eq:resolution of R} is compatible with these new filtrations (on each free summand of $M_i$ the filtration is simply given by an appropriate degree shift of the filtration $\cF_\mu$ of $\hat{S}$). 

As in \eqref{eq:candidate lift}, for $M=M_i$ or $\hat{S}$ and any $u\in M\setminus \{0\}$, we set
$v_\mu(u):=\max\{t\in\bR\,|\,u\in \cF_{\mu}^t M\}$. For ease of notation, we shall write $\gr_\mu (\hat{R})$ instead of $\gr_{\cF_\mu} (\hat{R})$ etc. We claim that 
\begin{equation} \label{eq:gr_mu = R_0}
\gr_\mu \hat{R}\cong R_0.
\end{equation}
Note that the kernel of the induced map $S\cong \grhat{S}\to \grhat{R}$ is $\grhat{I}$. Thus in view of the exact sequence \eqref{eq:resolution of R_0}, to prove the claim it suffices to show that $\oM_1 \cong \gr_\mu (M_1)$ surjects onto $\grhat{I}$. Equivalently, we need to show that the map $\varphi_1\colon M_1\to \hat{S}$ is strict with respect to the filtration $\cF_\mu$, namely, $\varphi_1(\cF_\mu M_1) = \varphi_1 (M_1)\cap \cF_\mu \hat{S} = \cF_\mu \hat{I}$.

To this end, let $u\in \hat{I}$; among its preimages in $M_1$, choose one (denoted by $r$) that lies in the smallest filtered subspaces $\cF_\mu^t M_1$, namely, let $r\in \varphi^{-1}_1 (u)$ be such that $v_\mu (r')\le v_\mu (r)$ for all $r'\in \varphi^{-1}_1 (u)$. This is possible since by our choice of the filtration $\cF_\mu$ on the free $\hat{S}$-module $M_1$, the set $v_\mu(M_1\setminus \{0\})$ of jumping numbers is a discrete set that is bounded from below, and for any $r\in \varphi^{-1}_1 (u)$ we have $v_\mu(u)\ge v_\mu(r)$ as $\varphi_1$ is compatible with the filtration $\cF_\mu$, thus $v_\mu (\varphi^{-1}_1 (u))$ is also bounded from above. Let us prove that $v_\mu(u)=v_\mu(r)$. Suppose not, then $v_\mu(u)> v_\mu(r)$, hence the reduction $\bar{r}\in \oM_1$ of $r\in M_1$ is in the kernel of $\bphi_1$, hence by the exactness of \eqref{eq:resolution of R_0} we see that $\bar{r}=\bphi_2 (\bar{s})$ for some $\bar{s}\in \oM_2$. Choose some $s\in M_2$ whose reduction is $\bar{s}$. Then we have $v_\mu (r-\varphi_2(s))>v_\mu (r)$. But since \eqref{eq:resolution of R} is exact, we also have $r-\varphi_2(s)\in \varphi^{-1}_1(u)$. This contradicts our initial choice of $r$. Thus the equality $v_\mu(u)=v_\mu(r)$ holds and we conclude that $\varphi_1(\cF_\mu M_1) = \cF_\mu \hat{I}$. As discussed at the beginning of the proof, this proves the claim \eqref{eq:gr_mu = R_0}. 

The lemma is now a formal consequence of \eqref{eq:gr_mu = R_0}. Since $R_0$ is an integral domain, we deduce that $v_\mu$ is a valuation (see e.g. \cite[Lemma 2.4]{Z-SDC-survey}; it is quasi-monomial since $R_0$ is finitely generated (see e.g. \cite[Discussion 3.16]{ELS03}). By construction, we have $v_\mu (u)>0$ for all $u\in \fm\subseteq \hat{R}$. Hence the restriction of $v_\mu$ to $R$ gives the desired valuation.
\end{proof}

Specializing to the stable degeneration setting, we obtain the following.

\begin{cor} \label{cor:lift kc}
Let $c_1,c_2>0$ be two positive constants, let $m$ be a positive integer and let $x_0\in (X_0=\Spec(R_0),\Delta_0;\xi_0)$ be a log Fano cone singularity. Assume that $m\Delta_0$ is Cartier, $R_0$ has syzygy complexity at most $c_1$, and $\wt_{\xi_0}(m\Delta)\le c_2$. Then there exists a rational perturbation $\xi_1\in \ft^+_\bR$ of the Reeb vector $\xi_0$ depending only on $c_1,c_2,m$ and the value semigroup $\Phi_{\xi_0}(R_0)$ such that for any klt singularity $x\in (X=\Spec(R),\Delta)$ which degenerates into $x_0\in (X_0,\Delta_0;\xi_0)$ through some $\bR$-test configuration and such that $m\Delta$ is a $\bQ$-Cartier Weil divisor, there exists a divisorial valuation $v_1\in \Val_{X,x}$ that induces a degeneration of $x\in (X,\Delta)$ to the log Fano cone $x_0\in (X_0,\Delta_0;\xi_1)$.
\end{cor}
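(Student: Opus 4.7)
The plan is to combine Lemmas~\ref{lem:lift valuation} and~\ref{lem:uniform nbh} to produce one open neighbourhood $U\subseteq\ft^+_\bR$ of $\xi_0$ whose size depends only on the stated data, and then to pick a quasi-regular $\xi_1\in U$ once and for all. First, using the syzygy complexity assumption, I would fix a homogeneous generating set $\bu_1,\dots,\bu_N$ of $R_0$ of weights $\lambda_i=\wt_{\xi_0}(\bu_i)$ together with a graded free resolution $\oM_2\to\oM_1\to S\to R_0\to 0$ such that $N$, the $\lambda_i$, and the matrix entries of $\bphi_1,\bphi_2$ all have complexities bounded by $c_1$; in particular $\lambda=(\lambda_1,\dots,\lambda_N)$ is determined up to finite ambiguity by $c_1$ and $\Phi_{\xi_0}(R_0)$. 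Lifting this resolution as in~\eqref{eq:resolution of R} and applying Lemma~\ref{lem:uniform nbh} to each (finitely many) matrix entry of $\varphi_1,\varphi_2$, one obtains an open neighbourhood $U_1\ni\xi_0$ depending only on $\lambda$ and $c_1$ on which the reduction hypothesis of Lemma~\ref{lem:lift valuation} holds. Consequently, for every $\xi\in U_1$ and every klt singularity $(X,\Delta)$ degenerating to $(X_0,\Delta_0;\xi_0)$, the same recipe produces a quasi-monomial valuation $v_\xi\in\Val_{X,x}$ with $\gr_{v_\xi}R\cong R_0$ and induced Reeb vector $\xi$.

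Next the plan is to shrink further so that the perturbation also preserves the boundary divisor. Pick a $\bT$-semi-invariant $\delta_0\in R_0$ defining the Cartier divisor $m\Delta_0$, with $w_0=\wt_{\xi_0}(\delta_0)\le c_2$. For any $(X,\Delta)$ as in the hypothesis, $m\Delta$ is locally defined at $x$ by some $\delta\in\hat R$ with $\mathrm{in}_{v_{\xi_0}}(\delta)=\delta_0$; the strictness of $\varphi_0$ established in the proof of Lemma~\ref{lem:lift valuation} allows us to lift $\delta$ to $\tilde\delta\in\hat S$ with $\wt_\lambda(\tilde\delta)=w_0$ and $\mathrm{in}_\lambda(\tilde\delta)$ mapping to $\delta_0\in R_0$. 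Applying Lemma~\ref{lem:uniform nbh} to $\tilde\delta$ produces a neighbourhood depending only on $\lambda$ and on the support of $\mathrm{in}_\lambda(\tilde\delta)$. Since each $\lambda_i>0$, the set $\{a\in\bN^N:\langle\lambda,a\rangle\le c_2\}$ is finite, so there are only finitely many possible initial forms; intersecting the corresponding neighbourhoods yields an open $U_2\ni\xi_0$ depending only on $\lambda$ and $c_2$ with the property that $\mathrm{in}_{\iota(\xi)}(\tilde\delta)=\mathrm{in}_\lambda(\tilde\delta)$ for every $\xi\in U_2$ and every admissible lift $\tilde\delta$.

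Finally, I would set $U=U_1\cap U_2$, an open neighbourhood of $\xi_0$ depending only on $c_1,c_2,m$ and $\Phi_{\xi_0}(R_0)$. Since quasi-regular Reeb vectors are dense in $\ft^+_\bR$, one may choose a quasi-regular $\xi_1\in U$ once and for all. For every $(X,\Delta)$ as in the hypothesis, Lemma~\ref{lem:lift valuation} then supplies $v_{\xi_1}\in\Val_{X,x}$ with $\gr_{v_{\xi_1}}R\cong R_0$ and induced Reeb vector $\xi_1$, and this valuation is divisorial because $\iota(\xi_1)$ has rational rank one. By the choice of $U_2$, the initial form under $v_{\xi_1}$ of any local defining equation of $m\Delta$ still equals $\delta_0$, so $v_{\xi_1}$ realises the degeneration of $(X,\Delta)$ to $(X_0,\Delta_0;\xi_1)$. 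The main obstacle is achieving uniformity over $(X,\Delta)$: the lift $\tilde\delta$ itself depends on $(X,\Delta)$ and is an infinite series in $\hat S$, but only its initial form $\mathrm{in}_\lambda(\tilde\delta)$—drawn from a finite set controlled by $\lambda$ and $c_2$—enters Lemma~\ref{lem:uniform nbh}, which is precisely what allows a single $U_2$ to work for all $(X,\Delta)$ simultaneously.
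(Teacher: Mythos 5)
Your proposal is correct and follows essentially the same route as the paper's proof: Lemma~\ref{lem:lift valuation} (via the lifted resolution and Lemma~\ref{lem:uniform nbh} applied to the matrix entries) gives a uniform neighbourhood $U$ of $\xi_0$ in which any rational $\xi_1$ lifts to a rational-rank-one, hence divisorial, valuation, and a second application of Lemma~\ref{lem:uniform nbh} to a local equation of $m\Delta$ --- using that $\wt_{\xi_0}(m\Delta)$ lies in the finite set $\Phi_{\xi_0}(R_0)\cap[0,c_2]$ --- shrinks $U$ so the boundary still specializes to $m\Delta_0$. The only detail worth flagging is that the hypothesis gives $m\Delta$ merely $\bQ$-Cartier Weil, so one should note (as the paper does) that Cartierness of $m\Delta_0$ forces $m\Delta$ to be Cartier at $x$ before speaking of its local defining equation $\delta$.
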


\begin{proof}
By the above construction and Lemma \ref{lem:lift valuation}, we see that there exists an open neighbourhood $U\subseteq \ft_\bR^+$ of $\xi_0$, depending only on $c_1$ and $\Phi_{\xi_0}(R_0)$, such that for any $\xi_1\in U$ there exists a quasi-monomial valuation $v_1$ of $R$ whose associated graded ring $\gr_{v_1} R$ is isomorphic to $R_0$, and the induced Reeb vector on $X_0=\Spec(R_0)$ is exactly $\xi_1$. If $\xi_1$ is rational, then $v_1$ has rational rank one and hence is divisorial. Since $m\Delta_0$ is Cartier, the same holds for $m\Delta$ at $x$. As $\wt_{\xi_0}(m\Delta)$ belongs to the bounded discrete set $\Phi_{\xi_0}(R_0)\cap [0,c_2]$, there are only finitely possible values of $\wt_{\xi_0}(m\Delta)$. By Lemma \ref{lem:uniform nbh}, this implies that after possibly shrinking $U$ (in terms of $c_1,c_2,m$ and $\Phi_{\xi_0}(R_0)$), we can further assume that the Cartier divisor $m\Delta$ still specializes to $m\Delta_0$ under the degeneration induced by $v_1$. The desired statement then holds for any rational Reeb vector $\xi_1\in U$.
\end{proof}

\subsection{Mld of Koll\'ar components}

We now put the construction from the previous subsection in families to prove Theorem \ref{thm:mld^K bdd}. We need another auxiliary lemma.

\begin{lem} \label{lem:stable degeneration Q-Gor}
Let $x\in (X=\Spec(R),\Delta)$ be a klt singularity such that $K_X$ is $\bQ$-Cartier. Then the same is true for its K-semistable log Fano cone degeneration.
\end{lem}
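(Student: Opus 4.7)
The plan is to propagate the $\bQ$-Cartier property of $K_X$ to $K_{X_0}$ by combining a canonical cyclic cover reduction with an equivariant test configuration argument, invoking the two-step factorization of \cite{XZ-SDC} to handle the general quasi-monomial case. First I would reduce to the situation where $K_X$ is Cartier: since $X$ is affine, after shrinking to a smaller affine neighbourhood of $x$ we may assume $mK_X\sim 0$ for some positive integer $m$, giving a cyclic $\mu_m$-cover $\pi\colon\tilde X=\Spec(\tilde R)\to X$ on which $K_{\tilde X}$ is Cartier. By Theorem \ref{thm:SDC} the $\hvol$-minimizing valuation $v$ is unique up to rescaling, hence $\mu_m$-invariant, and it lifts canonically to a valuation $\tilde v$ on $\tilde X$ whose associated graded ring $\gr_{\tilde v}\tilde R$ is a $\mu_m$-cover of $R_0=\gr_v R$. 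The induced map $\tilde X_0\to X_0$ is a $\mu_m$-quotient that is \'etale in codimension one; once $K_{\tilde X_0}$ is shown to be Cartier, $K_{X_0}$ is automatically $\bQ$-Cartier. From here I assume $K_X$ itself is Cartier.

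Next, in the case where $v$ is divisorial, I would use the Rees algebra test configuration $\pi\colon\cX=\Spec\bigl(\bigoplus_{k\ge 0}\fa_k(v)t^{-k}\bigr)\to\bA^1$, a flat $\bG_m$-equivariant family with general fiber $X$ and central fiber $X_0$. Fix a nowhere-vanishing section $s\in\omega_X$ (possible after further localization). By flatness and Cohen-Macaulayness of the family, the relative dualizing sheaf $\omega_{\cX/\bA^1}$ is well-defined and commutes with fiberwise restriction, and $s$ extends to a $\bG_m$-equivariant section $\tilde s$ of $\omega_{\cX/\bA^1}$ whose restriction to $X_0$ is the initial form $s_0\in\omega_{X_0}$. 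Since $\tilde s$ is nowhere vanishing on $\cX\setminus X_0\cong X\times\bG_m$, its zero locus $Z(\tilde s)$ is a $\bG_m$-stable closed subset of $X_0$; the combination of reflexivity of $\omega_{\cX/\bA^1}$ and $s_0\neq 0$ generically on $X_0$ should force $Z(\tilde s)$ to be empty, so $\omega_{X_0}$ is locally trivial near $x_0$ and $K_{X_0}$ is Cartier.

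For general quasi-monomial $v$, I would use the two-step degeneration of \cite{XZ-SDC}: first a divisorial degeneration $X\rightsquigarrow X_1$ through a Koll\'ar component (previous step gives $K_{X_1}$ $\bQ$-Cartier), then a toric degeneration $X_1\rightsquigarrow X_0$ realized by a one-parameter subgroup of the enlarged torus $\bT_0\ni\xi_v$. On the resulting $\bG_m$-equivariant flat family the Cartier index of $K$ is preserved under specialization, which gives $K_{X_0}$ $\bQ$-Cartier. The hard part will be the middle paragraph, specifically verifying that $Z(\tilde s)\subseteq X_0$ is empty, equivalently that $\omega_{\cX/\bA^1}$ is invertible at the vertex $x_0\in X_0\subseteq\cX$ and not merely away from it. The interplay of $\bG_m$-equivariance, reflexivity of the relative dualizing sheaf, and non-vanishing on the generic fiber should suffice, but it requires care to rule out a divisorial vanishing locus of $\tilde s$ passing through $x_0$; the cyclic cover reduction and the two-step factorization then handle the remaining structural issues in a formal way.
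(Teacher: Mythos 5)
There is a genuine gap, and it sits exactly where you flag it: the claim in your second paragraph that $Z(\tilde s)=\emptyset$. Being ($\bQ$-)Gorenstein is an open condition on the total space of a flat family, not a closed one, so nothing about $\bG_m$-equivariance, reflexivity of $\omega_{\cX/\bA^1}$, or non-vanishing of $\tilde s$ on the generic fiber prevents the relative dualizing sheaf from failing to be invertible along a closed subset of the central fiber, nor does it prevent $\tilde s|_{X_0}$ from vanishing along a divisor of $X_0$ through the vertex (a divisor of $X_0$ has codimension two in $\cX$, so it is perfectly compatible with $\tilde s$ being nowhere zero on $\cX\setminus X_0$ once $\omega_{\cX/\bA^1}$ is only reflexive there). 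If your soft argument were correct, it would show that \emph{any} $\bG_m$-equivariant flat degeneration of a Gorenstein klt singularity with normal central fiber has $\bQ$-Gorenstein central fiber, which is false (e.g. cone singularities can degenerate to cones over a polarized variety $(V,L)$ with $K_V$ not proportional to $L$). The same unproved assertion reappears in your third paragraph as ``the Cartier index of $K$ is preserved under specialization.'' In other words, the $\bQ$-Gorenstein property of $X_0$ is not a formal consequence of flatness; it genuinely uses the special structure of the minimizing valuation. The paper's proof goes through precisely this structure: by \cite[Lemma 3.4]{XZ-SDC} the minimizer $v$ is a monomial lc place of a special $\bQ$-complement $\Gamma$ with respect to a log smooth model of $(X,\Delta)$; since $K_X$ is $\bQ$-Cartier, $\Gamma$ is also a special $\bQ$-complement of the singularity $x\in X$ with trivial boundary, and then \cite[Theorem 4.1]{XZ-SDC} says that the induced degeneration $\Spec(\gr_v R)$ of $X$ itself is klt, in particular $\bQ$-Gorenstein.

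Your first reduction (the index-one cover of $K_X$) is a reasonable idea but is not needed in the correct argument, and it has its own loose ends: one must check that the minimizer is $\mu_m$-invariant and pulls back to the minimizer upstairs, that $\gr_{\tilde v}\tilde R$ with its $\mu_m$-action has $(\gr_{\tilde v}\tilde R)^{\mu_m}=\gr_v R$, and, more seriously, that the quotient $\tilde X_0\to X_0$ is quasi-\'etale, which does not follow just from quasi-\'etaleness of $\tilde X\to X$ (the ramification could a priori grow on the central fiber). So even granting the Cartier case, the reduction would need additional work; but the essential missing ingredient is the lc-place/special-complement input replacing your dualizing-sheaf argument.
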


\begin{proof}
This is a direct consequence of the stable degeneration theory. Let $v\in \Val_{X,x}$ be the minimizer of the normalized volume function. By \cite[Lemma 3.4]{XZ-SDC}, there exists a log smooth model $\pi\colon (Y,E)\to (X,\Delta)$ such that $v$ is a monomial lc place of some special $\bQ$-complement $\Gamma$ with respect to $(Y,E)$ (we refer to Definition 3.1 of {\it loc. cit.} for the relevant definitions). Since $K_X$ is $\bQ$-Cartier, $\Gamma$ is also a special $\bQ$-complement of the klt singularity $x\in X$. Since the underlying singularity of the K-semistable log Fano cone is $X_0=\Spec(\gr_v R)$, we deduce from \cite[Theorem 4.1]{XZ-SDC} that it is also $\bQ$-Gorenstein (and in fact klt).  
\end{proof}

\begin{proof}[Proof of Theorem \ref{thm:mld^K bdd}]

By \cite[Lemma 2.18]{Z-mld^K-2}, it suffices to prove the theorem when $I\subseteq \bQ$ and thus we may assume that $I=\frac{1}{m_0}\bN \cap [0,1]$ for some positive integer $m_0$. By \cite[Proposition 5.8]{Z-mld^K-1}, we also know that it is enough to consider singularities $x\in (X,\Delta)$ for which $K_X$ is $\bQ$-Cartier. These in particular imply that $m_0\Delta$ is a $\bQ$-Cartier Weil divisor.

There exists a bounded set $\cS$ of $\bQ$-Gorenstein log Fano cone singularities with coefficients in $I=\frac{1}{m_0}\bN$ and local volume at least $\varepsilon$. This follows from Theorem \ref{thm-Fanocone bounded} and the fact that in any bounded family $B\subseteq \cX\to B$ of singularities the locus of $b\in B$ for which the fiber $b\in \cX_b$ is $\bQ$-Gorenstein klt is constructible ({\it cf.} the proof of Lemma \ref{lem:two defn of bdd agree}). By Lemma \ref{lem:stable degeneration Q-Gor}, the K-semistable log Fano cone degeneration of any $n$-dimensional $\bQ$-Gorenstein klt singularity $x\in (X,\Delta)$ with ${\rm Coeff}(\Delta)\subseteq I$ and $\hvol(x,X,\Delta)\ge \varepsilon$ belongs to $\cS$.

Without loss of generality, we shall assume that $\cS$ consists of just one flat family $B\subseteq(\cX=\Spec(\cR),\cD)\to B$ over a smooth base where $K_\cX$ is $\bQ$-Cartier (in practice this means we freely stratify the base $B$ and analyze one deformation family at a time). Let $\bT$ be the torus that acts fiberwise on the family. Note that the Reeb cone and the volume function on it is independent of $b\in B$. This is because if $\cR=\oplus \cR_\alpha$ is the weight decomposition, then each $\cR_\alpha$ is free over $\cO_B$ by the flatness of $\cX\to B$, hence their ranks are constant in $b\in B$. After possibly stratifying the base $B$, we may assume (by e.g. \cite[Theorem 2.15(3)]{LX-stability-higher-rank}) that the log discrepancy function $A_{\cX_b,\cD_b}$ on the Reeb cone is independent of $b\in B$ as well. This implies that there exists some $\xi\in N(\bT)_\bR$, independent of $b\in B$, that minimizes the normalized volume function on the Reeb cone. In particular, if $b\in (\cX_b,\cD_b)$ is a K-semistable log Fano cone, then (up to rescaling) $\xi$ is the associated Reeb vector. The value semigroup $\Phi:=\Phi_{\xi} (\cR_b)$ is thus also constant in $b$.

Since $\cS$ is bounded, we may choose some $c>0$ such that for all $b\in B$, the syzygy complexity of $\cR_b$ is at most $c$. By construction, the Weil divisor $m_0\cD$ is $\bQ$-Cartier (since both $K_{\cX}+\cD$ and $K_{\cX}$ are), thus by \cite[Corollary 1.4]{XZ-minimizer-unique} we see that there exists some positive integer $m_1$ depending only on $\varepsilon$ such that $m\cD_b$ is Cartier for all $b$, where $m=m_0 m_1$. Again since $\cS$ is bounded, we may further assume that $\wt_\xi (m\cD_b)\le c$ for all $b\in B$ after possibly enlarging $c$. 

By Corollary \ref{cor:lift kc}, we deduce that there exists a (rational) Reeb vector $\xi_1$, independent of $b\in B$, such that for any klt singularity $x\in (X,\Delta)$ whose K-semistable log Fano cone degeneration $(X_0,\Delta_0;\xi_0)$ belongs to $\cS$, there exists a divisorial valuation $v_1\in \Val_{X,x}$ that induces a special degeneration of $x\in (X,\Delta)$ to the log Fano cone $(X_0,\Delta_0;\xi_1)$. Replacing $\xi_1$ by a fixed multiple, we may also assume that its value group equals $\bZ$. In particular, we have $v_1=\ord_E$ for some divisor $E$ over $x\in (X,\Delta)$. Since its induced degeneration $(X_0,\Delta_0)$ is klt, by \cite[Theorem 4.1]{XZ-SDC} we see that $E$ is necessarily a Koll\'ar component over $x\in (X,\Delta)$. On the other hand, by \cite[Lemma 2.58]{LX-stability-higher-rank} we also have $A_{X,\Delta}(E)=A_{X_0,\Delta_0}(\wt_{\xi_1})$, and the right hand side is bounded from above since $(X_0,\Delta_0)$ belongs to a bounded family and the Reeb vector $\xi_1$ is constant in this family. Hence we conclude that $\mld^K(x,X,\Delta)$ is bounded from above and this finishes the proof.
\end{proof}

\bibliography{ref}

\end{document}